\newtheoremstyle{mystyle}
  {}
  {}
  {\normalfont}
  { }
  {\bfseries}
  {}
  {10pt}
  { }
\theoremstyle{mystyle}
\newtheorem{theorem}{Theorem}
\newtheorem{proposition}{Proposition}
\newtheorem{lemma}{Lemma}
\newtheorem{remark}{Remark}
\DeclareMathOperator*{\argmin}{arg\,min}
\title[Adaptive inference for small diffusion processes]{Adaptive inference for small diffusion processes
based on sampled data}
\author[T Kawai]{Tetsuya Kawai $^{1}$}
\author[M Uchida]{Masayuki Uchida $^{1,2}$}
\address{$^{1}$Graduate School of Engineering Science, Osaka University}
\address{$^{2}$Center for Mathematical Modeling and Data Science (MMDS), Osaka University and JST CREST}
\begin{document}

\begin{abstract}
We consider parametric estimation and tests for multi-dimensional diffusion processes 
with a small dispersion parameter $\varepsilon$ from discrete observations. For parametric estimation of diffusion processes, 
the main target is to estimate 
the drift parameter and the diffusion parameter. In this paper, we propose two types of adaptive estimators for both parameters and show their asymptotic properties under $\varepsilon\to0$, $n\to\infty$ 
and the balance condition that $(\varepsilon n^\rho)^{-1} =O(1)$ for some $\rho>0$. 
Using these adaptive estimators, we also introduce consistent adaptive testing methods and prove that test statistics for adaptive tests have asymptotic distributions under null hypothesis.
In simulation studies, we examine and compare asymptotic behaviors of the two kinds of adaptive estimators and test statistics. Moreover, we treat the SIR model which describes a simple epidemic spread for a biological application.
\end{abstract}

\keywords{Adaptive test, Asymptotic theory; Discrete time observation; Minimum contrast estimation; Stochastic differential equation; SIR model.}

\maketitle

\section{Introduction}\label{sec1}
We consider a $d$-dimensional small diffusion process satisfying the following stochastic differential equation (SDE):
\begin{equation}\label{sde}
\begin{cases}
dX_t = b(X_t,\alpha) dt + \varepsilon \sigma(X_t,\beta)dW_t, \quad t\in[0,T],\\
X_0 = x_0,
\end{cases}
\end{equation}
where $W_{t}$ is the $r$-dimensional standard Wiener process, $\alpha\in\Theta_{\alpha}\subset\mathbb{R}^{p},\ \beta \in \Theta_{\beta} \subset \mathbb{R}^{q}$, $\theta =(\alpha,\beta)$, $\Theta := \Theta_{\alpha} \times \Theta_{\beta}$ being compact and convex parameter space, $b:{\mathbb{R}}^{d} \times {\Theta}_{\alpha} \to {\mathbb{R}}^{d}$ and $\sigma:{ \mathbb{R}}^{d} \times {\Theta}_{\beta} \to {\mathbb{R}}^{d}\otimes {\mathbb{R}}^{r}$ are known except for the parameter $\theta$, and the initial value $x_0\in\mathbb{R}$ and the small coefficient $\varepsilon >0$ are known.
We assume the true parameter $\theta_0=(\alpha_0,\beta_0)$ belongs to $\mathrm{Int}(\Theta)$, and
the data are discrete observations $(X_{t^n_k})_{k=0,\ldots, n}$, where $t^n_k=k h_n$ and $h_n=T/n$.

A family of small diffusion processes defined by \eqref{sde} is an important class and called dynamical systems with small perturbations, see Azencott \cite{Azencott_1982}, Freidlin and Wentzell \cite{Freidlin_1998} and Yoshida \cite{yoshida1992Malliavin}. For applications of small diffusion processes to mathematical finance and mathematical biology, see Yoshida \cite{yoshida1992asymptotic}, Uchida and Yoshida \cite{Uchida_Yoshida_2004}, Guy et al  
\cite{Guy_2014, Guy_2015} and references therein.

Asymptotic theory of parametric inference for small diffusion processes 
has been well-developed. 
For continuous-time observations, see Kutoyants \cite{Kutoyants_1984, Kutoyants_1994} and Yoshida \cite{yoshida1992Malliavin,yoshida2003}. 
As for discrete observations, Genon-catalot \cite{genon_1990} studied minimum contrast estimation for the drift parameter and proved that this estimator has
asymptotic efficiency under the assumption $\varepsilon\sqrt{n}=O(1)$. 
Laredo \cite{laredo1990} investigated the asymptotically efficient estimator by using interpolated process under the assumption $(\varepsilon n^2)^{-1}\to 0$. S\o rensen and Uchida \cite{Sorensen_Uchida_2003} studied 
the joint estimation for both drift and diffusion parameters based on minimum contrast estimators. 
They proved that the estimator for drift parameter is asymptotically efficient 
and the estimator for diffusion parameter  is asymptotically normal 
under $(\varepsilon\sqrt{n})^{-1} =O(1)$. 
Uchida \cite{Uchida_2004} investigated the asymptotically efficient estimator for drift parameter 
by using the approximate martingale estimating function 
under $(\varepsilon n^l)^{-1}\to0$, where $l$ is a positive integer. 
For the asymptotically efficient estimator of drift parameter based on 
an approximate martingale estimating function
of a one-dimensional small diffusion process
under $\epsilon \rightarrow 0$ and  $n \rightarrow \infty$,
see  Uchida \cite{Uchida_2008}.
Gloter and S\o rensen \cite{Gloter_2009} generalized the results of S\o rensen and Uchida \cite{Sorensen_Uchida_2003} and Uchida \cite{Uchida_2004}. 
They proposed the minimum contrast estimators for both drift and diffusion parameters 
whose asymptotic covariance matrix equals to that of the estimators in S\o rensen 
and Uchida \cite{Sorensen_Uchida_2003} under
 $(\varepsilon n^\rho)^{-1}\ =O(1)$, where $\rho>0$. 

The adaptive inference for diffusion processes has been studied by many researchers. 
Since the adaptive method can divide the inference for $(\alpha,\beta)$ 
into that for the drift parameter $\alpha$ and that for diffusion parameter $\beta$, 
we expect that the adaptive parametric inference for diffusion processes 
is dealt with more accurately and quickly from the viewpoint of numerical analysis.
For adaptive parametric estimation for ergodic diffusion processes, many researchers studied 
and obtained the asymptotic results, see Prakasa Rao \cite{prakasa1983, prakasa1988}, 
Yoshida \cite{yoshida1992estimation}, Kessler \cite{Kessler_1995}, and Uchida and Yoshida \cite{Uchida_2012}.
Nomura and Uchida \cite{nomura2016adaptive} and Kaino and Uchida \cite{kaino2018hybrid} proposed 
the adaptive Bayes type estimators and the hybrid estimators for both drift and diffusion parameters 
in small diffusion processes. 
They proved the asymptotic efficiency for the estimator of the drift parameter 
and the asymptotic normality for the estimator of diffusion parameter 
whose asymptotic variance equals to that of the estimator in S\o rensen and Uchida \cite{Sorensen_Uchida_2003}.
Moreover their estimators  have convergence of moments under $(\varepsilon \sqrt{n})^{-1}=O(1)$.
For adaptive tests, Kawai and Uchida \cite{Kawai_2020_arxiv} proposed adaptive testing method in ergodic diffusion processes, which can test 
drift parameters and diffusion parameters separately. To construct test statistics of these adaptive tests, adaptive estimators in ergodic diffusions were used and  asymptotic properties of these test statistics were proved. 
Nakakita and Uchida \cite{nakakita2019adaptive} investigated the adaptive test for noisy ergodic diffusion processes. 
They derived the asymptotic null distribution of the adaptive test statistics based on the local means 
and consistency of the tests under alternatives.

In this paper, we utilize the results of Gloter and S\o rensen \cite{Gloter_2009} and propose 
the two types of adaptive estimators for both drift and diffusion parameters in small diffusion processes 
under the assumption $(\varepsilon n^\rho)^{-1}=O(1)$, where $\rho>0$. 
In small diffusions, the convergence rates of estimators for the drift parameter and the diffusion parameter 
are $\varepsilon^{-1}$ and $\sqrt{n}$, respectively. We first estimate drift parameter $\alpha$ and next estimate diffusion parameter $\beta$ in our adaptive estimation methods for arbitrary $\rho>0$. Moreover, if the balance coefficient $\rho$ satisfies $0<\rho<1/2$, we can also utilize simpler adaptive estimation methods than the above methods. In these simpler methods, we first estimate diffusion parameter since it holds that $\varepsilon^{-1}/\sqrt{n}\to0$.

The main result of this paper is that our proposed two kinds of adaptive estimators have asymptotic efficiency 
for $\alpha$ and asymptotic normality for $\beta$ whose asymptotic covariance matrix is the same as 
that of the estimators proposed in S\o rensen and Uchida \cite{Sorensen_Uchida_2003} under the milder assumption than $(\varepsilon \sqrt{n})^{-1}=O(1)$. 
We also give the estimator for drift parameter $\alpha$ 
which can be estimated independently from diffusion parameter $\beta$ 
and show that the estimator has asymptotic normality under the assumption 
$(\varepsilon n^\rho)^{-1} =O(1)$. For adaptive tests, we introduce the two kinds of likelihood ratio type test statistics, which are constructed by the above proposed adaptive estimators. These test statistics have asymptotic distribution under null hypothesis, and these tests are consistent under alternatives.

The paper is organized as follows. 
In Section \ref{sec2}, notation and assumptions are introduced. 
The infinitesimal generator of the small diffusion process 
and its approximation used in constructing the contrast functions for adaptive estimators are defined. 
In Section \ref{sec3}, we propose the two kinds of adaptive estimators and state the asymptotic properties
of the proposed estimators. 
Note that the drift parameter can be estimated 
independently of the diffusion parameter, 
and adaptive estimators can also treat the case of having 
the same parameters in the drift and diffusion coefficients.
In Section \ref{sec4}, adaptive testing method for small diffusion processes 
are introduced and two kinds of test statistics are constructed. 
We show the asymptotic properties of  these test statistics.
In Section \ref{sec5}, 
we give some examples and simulation results of the asymptotic performance for
two types of adaptive estimators and test statistics for multi-dimensional small diffusion processes. 
In model 1, we compare the adaptive estimators 
with the joint estimator proposed in Gloter and S\o rensen \cite{Gloter_2009}, 
and an adaptive test is also conducted.
In model 2, we treat the numerical simulation of the SIR model. 
In model 3, the difference of the asymptotic performances between the two adaptive estimators is examined.
Section \ref{sec6} is devoted to the proofs of the results presented in Sections \ref{sec3} and \ref{sec4}.

\section{Notation and assumptions}\label{sec2}
In this paper, we set $\partial_{\alpha_i}:=\partial/\partial{\alpha_i},\ \partial_{\beta_i}:=\partial/\partial{\beta_i},\ \partial_{\alpha}:=(\partial_{\alpha_1},\ldots,\partial_{\alpha_{p}})^\top,\ \partial_{\beta}:=(\partial_{\beta_1},\ldots,\partial_{\beta_{q}})^\top$,\ $\partial^2_{\alpha}:=\partial_\alpha\partial_\alpha^\top,\ \partial^2_{\beta}:=\partial_\beta\partial_\beta^\top,\ \partial^2_{\alpha\beta}:=\partial_\alpha\partial_\beta^\top$, where $\top$ is the transpose of a matrix. 
The symbols $\overset{P}{\to}$ and $\overset{d}{\to}$ indicate convergence in probability and convergence in distribution, respectively. 

Let $(X_t^0)$ be the solution of the following ordinary differential equation (ODE) which is the case of $\varepsilon=0$ and $\alpha=\alpha_0$ in the SDE \eqref{sde}:
\begin{equation}\label{ode}
\begin{cases}
dX_t^0 = b(X_t^0,\alpha_0) dt, \quad t\in[0,T],\\
X_0^0 = x_0,
\end{cases}
\end{equation}
and $I(\theta_0)$ be the $(p+q) \times (p+q)$-matrix defined as 
\begin{align*}
I(\theta_0)&:=\begin{pmatrix}
\left(I_{b}^{i,j}(\theta_0)\right)_{1\le i,j\le p}&0\\
0&\left(I_\sigma^{i,j}(\beta_0)\right)_{1\le i,j \le q}
\end{pmatrix}, 
\end{align*}
where
\begin{align*}
I_{b}^{i,j}(\theta_0)&=\int_{0}^{T}\left(\partial_{\alpha_i}b(X_s^0,\alpha_0)\right)^\top[\sigma\sigma^\top]^{-1}(X_s^0,\beta_0)\left(\partial_{\alpha_j}b(X_s^0,\alpha_0)\right)ds,\\
I_\sigma^{i,j}(\beta_0)&=\frac{1}{2T}\int_{0}^{T}\mathrm{tr}\left[\left(\left(\partial_{\beta_i}[\sigma\sigma^\top]\right)[\sigma\sigma^\top]^{-1}\left(\partial_{\beta_j}[\sigma\sigma^\top]\right)[\sigma\sigma^\top]^{-1}\right)(X_s^0,\beta_0)\right]ds.
\end{align*}
Moreover, we define $p\times p$-matrices $J_b(\alpha_0)=\left(J_b^{i,j}(\alpha_0)\right)_{1\le i,j\le p}$ and $K_b(\theta_0)=\left(K_b^{i,j}(\theta_0)\right)_{1\le i,j\le p}$ as
\begin{align*}
J_{b}^{i,j}(\alpha_0)&=\int_{0}^{T}\left(\partial_{\alpha_i}b(X_s^0,\alpha_0)\right)^\top\left(\partial_{\alpha_j}b(X_s^0,\alpha_0)\right)ds,\\
K_{b}^{i,j}(\theta_0)&=\int_{0}^{T}\left(\partial_{\alpha_i}b(X_s^0,\alpha_0)\right)^\top[\sigma\sigma^\top](X_s^0,\beta_0)\left(\partial_{\alpha_j}b(X_s^0,\alpha_0)\right)ds.
\end{align*}

We make the following assumptions.
\begin{enumerate}
\item[{\bf[A1]}] For all $\varepsilon>0,$ SDE \eqref{sde} with the true value of the parameter has a unique strong solution on some probability space $(\Omega,\mathcal{F},P)$, and ODE \eqref{ode} has a unique solution.
\item[{\bf[A2]}] $b\in \mathcal{C}^{^\infty}(\mathbb{R}^d\times \Theta_{\alpha})$, $\sigma\in \mathcal{C}(\mathbb{R}^d\times \Theta_{\beta})$, and there exists an open convex subset $\mathcal{U}\in\mathbb{R}^d$ such that $X_t^0\in\mathcal{U}$ for all $t\in[0,1]$, and $\sigma\in\mathcal{C}^{\infty}(\mathcal{U}\times \Theta_{\beta})$. Moreover $[\sigma\sigma^\top](x,\beta)$ is invertible on $\mathcal{U}\times \Theta_{\beta}$.
\item[{\bf[A3]}]
\begin{enumerate}[]
\item $b(X_t^0,\alpha) = b(X_t^0, \alpha_{0} )$ for all $t\in[0,1]\ \Rightarrow\ \alpha = \alpha_{0},$
\item $\sigma(X_t^0,\beta) = b(X_t^0, \beta_{0} )$ for all $t\in[0,1]\ \Rightarrow\ \beta = \beta_{0}.$
\end{enumerate}
\item[{\bf[A4]}] 
\begin{enumerate}[(i)]
\item $I_b (\theta_{0})$, $J_b(\alpha_0)$ and $K_b(\theta_0)$ are non-singular,
\item $I_\sigma (\theta_{0})$ is non-singular.
\end{enumerate}
\item[{\bf[B]}] $\varepsilon=\varepsilon_n\to0$ as $n\to\infty$ and there exists $\rho>0$ such that $\displaystyle{\varlimsup_{n\to\infty}}\ (\varepsilon n^\rho)^{-1}< +\infty$.
\end{enumerate}
\begin{remark}\label{rem:1}
(i) Assumption [{\bf A2}] is derived from a localization argument and hence [{\bf A2}] is the mild condition. The strict assumption of [{\bf A2}] and the relationship between the two assumptions are introduced in Section \ref{sec6}.

(ii) For [{\bf A4}], if we assume a condition for $\sigma$, then we can clarify the relationship among the regularity for $I_b (\theta_{0})$, $J_b (\alpha_{0})$ and $K_b (\theta_{0})$. 
Under [{\bf A2'}] in Section \ref{sec6}, in particular, it holds true that if one is regular, then the others are also regular.
\end{remark}

For [{\bf B}], we set the approximation degree $v$ as the integer such that $v=\lceil \rho+\frac{1}{2}\rceil$, 
where $\lceil y\rceil:=\min \{z\in\mathbb{Z}|\ y\le z\}$.
Here the two kinds of operators $\mathcal{L}_{\alpha_0}^\varepsilon$ and $\mathcal{L}_\alpha^0$ are introduced. 
We denote by $\mathcal{L}_{\alpha_0}^\varepsilon$ the infinitesimal generator of the diffusion process $X$: 
for any smooth function $f$,
\begin{align*}
\mathcal{L}_{\alpha_0}^\varepsilon(f)(x)&:=\sum_{i=1}^{d}{b^i(x,\alpha_0)\frac{\partial}{\partial{x_i}}f(x)}+\frac{\varepsilon^2}{2}\sum_{i,j=1}^{d}{[\sigma\sigma^\top]^{i,j}(x,\beta_0)\frac{\partial^2}{\partial{x_i}\partial{x_j}}f(x)},
\end{align*}
and define the simple approximation of the generator $\mathcal{L}_\alpha^0$ as
\begin{align*}
\mathcal{L}_\alpha^0(f)(x)&:=\sum_{i=1}^{d}{b^i(x,\alpha)\frac{\partial}{\partial{x_i}}f(x)}.
\end{align*}
Using the operator $\mathcal{L}_\alpha^0$, 
we set that for $2\le l\le v$,
\begin{align*}
P_{1,k}(\alpha)&:=X_{t_k^n}-X_{t_{k-1}^n}-h_nb(X_{t_{k-1}^n},\alpha), \\
P_{l,k}(\alpha)&:=P_{1,k}(\alpha)-Q_{l,k}(\alpha), \\
Q_{l,k}(\alpha)&:=\sum_{j=1}^{l-1}{\frac{h_n^{j+1}}{(j+1)!}(\mathcal{L}_\alpha^0)^j b(X_{t_{k-1}^n},\alpha)}.
\end{align*}
In particular, 
\begin{align*}
P_{2,k}(\alpha)&=X_{t_k^n}-X_{t_{k-1}^n}-h_nb(X_{t_{k-1}^n},\alpha)-\frac{h_n^2}{2}\sum_{i=1}^{d}{b^i(X_{t_{k-1}^n},\alpha)\frac{\partial}{\partial x_i}b(X_{t_{k-1}^n},\alpha)}, \\
Q_{2,k}(\alpha)&=\frac{h_n^2}{2}\sum_{i=1}^{d}{b^i(X_{t_{k-1}^n},\alpha)\frac{\partial}{\partial x_i}b(X_{t_{k-1}^n},\alpha)}.
\end{align*}
\section{Adaptive estimation}\label{sec3}
In this section, we propose the following two types of adaptive estimators. 

\subsection{Type I estimator}\label{sec:type1}
\ 
First, we introduce the adaptive estimators which can 
be divided the parameter optimization for $\theta=(\alpha, \beta)$ into the optimization of $\alpha$ and that of $\beta$.
\begin{enumerate}[Step 1.]
\item  Set
\begin{align*}
U_{\varepsilon,n,v}^{(1)}(\alpha)&:=\varepsilon^{-2}h_n^{-1}\sum_{k=1}^nP_{v,k}(\alpha)^\top P_{v,k}(\alpha),
\end{align*}
and $\tilde{\alpha}_{\varepsilon,n}^{(1)}$ is defined as
\begin{align*}
\tilde{\alpha}_{\varepsilon,n}^{(1)}:=\argmin_{\alpha\in\Theta_{\alpha}}U_{\varepsilon,n,v}^{(1)}(\alpha).
\end{align*}

\item Set
\begin{align*}
U_{\varepsilon,n,v}^{(2)}(\beta|\bar{\alpha})&:=\sum_{k=1}^n\left\{\log\det[\sigma\sigma^\top](X_{t_{k-1}^n},\beta)+\varepsilon^{-2}h_n^{-1}P_{v,k}(\bar{\alpha})^\top [\sigma\sigma^\top]^{-1}(X_{t_{k-1}^n},\beta)P_{v,k}(\bar{\alpha})\right\},
\end{align*}
and $\tilde{\beta}_{\varepsilon,n}=\tilde{\beta}_{\varepsilon,n}^{(1)}$ is defined as
\begin{align*}
\tilde{\beta}_{\varepsilon,n}:=\argmin_{\beta\in\Theta_{\beta}}U_{\varepsilon,n,v}^{(2)}(\beta|\tilde{\alpha}_{\varepsilon,n}^{(1)}).
\end{align*}

\item Set
\begin{align*}
U_{\varepsilon,n,v}^{(3)}(\alpha|\bar{\beta})&:=\varepsilon^{-2}h_n^{-1}\sum_{k=1}^nP_{v,k}(\alpha)^\top[\sigma\sigma^\top]^{-1}(X_{t_{k-1}^n},\bar{\beta}) P_{v,k}(\alpha),
\end{align*}
and $\tilde{\alpha}_{\varepsilon,n}=\tilde{\alpha}_{\varepsilon,n}^{(2)}$ is defined as
\begin{align*}
\tilde{\alpha}_{\varepsilon,n}:=\argmin_{\alpha\in\Theta_{\alpha}}U_{\varepsilon,n,v}^{(3)}(\alpha|\tilde{\beta}_{\varepsilon,n}).
\end{align*}

\end{enumerate}
In Step 1, we have the following asymptotic properties.
\begin{lemma}\label{lem:1}
Assume {\bf{[A1]}}-{\bf{[A3]}}, {\bf{[A4]}}-(i) and {\bf{[B]}}. Then it holds that
\begin{align*}
\varepsilon^{-1}(\tilde{\alpha}^{(1)}_{\varepsilon,n}-\alpha_0)=O_P(1).
\end{align*}
In particular,  
\begin{align*}
\varepsilon^{-1}(\tilde{\alpha}^{(1)}_{\varepsilon,n}-\alpha_0)\overset{d}{\to}
N_p(0,J_b(\alpha_0)^{-1}K_b(\theta_0)J_b(\alpha_0)^{-1})
\end{align*}
as $\varepsilon\to0$ and $n\to\infty$.
\end{lemma}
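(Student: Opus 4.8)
The plan is to treat $\tilde\alpha^{(1)}_{\varepsilon,n}$ as a minimum-contrast estimator and to run the classical argument in three steps — consistency, the convergence rate $\varepsilon^{-1}$, and the limit law via a Taylor expansion of the estimating equation $\partial_\alpha U^{(1)}_{\varepsilon,n,v}(\tilde\alpha^{(1)}_{\varepsilon,n})=0$ — with the normalisations $\varepsilon^2U^{(1)}_{\varepsilon,n,v}$ and $\varepsilon^2\partial_\alpha^2U^{(1)}_{\varepsilon,n,v}$ for the contrast and its Hessian and $\varepsilon\,\partial_\alpha U^{(1)}_{\varepsilon,n,v}$ for the gradient. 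The engine behind every estimate is a stochastic Itô–Taylor expansion of $P_{v,k}(\alpha_0)$: iterating Itô's formula in \eqref{sde} and subtracting the deterministic correction $Q_{v,k}$, I would write
\[
P_{v,k}(\alpha_0)=\varepsilon\int_{t_{k-1}^n}^{t_k^n}\sigma(X_{t_{k-1}^n},\beta_0)\,dW_s+R_k,\qquad \partial_{\alpha_i}P_{v,k}(\alpha)=-h_n\,\partial_{\alpha_i}b(X_{t_{k-1}^n},\alpha)+O(h_n^2),
\]
where the dominant part of $R_k$ is the deterministic discretisation remainder of order $h_n^{v+1}$, the remaining contributions (the $\varepsilon^2$-part of the full generator $\mathcal L^\varepsilon_{\alpha_0}$ and the iterated stochastic integrals) being of strictly smaller order in $\varepsilon$ and $h_n$. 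Everywhere I would use the smoothness in [A2], the standard $L^p$-bounds for small diffusions, and the convergence of $X$ to the ODE solution $X^0$ to replace coefficients at $X_{t_{k-1}^n}$ by those at $X^0_{t_{k-1}^n}$ up to $O(\varepsilon)$ and to let Riemann sums converge to integrals; this is the point of the localisation remark after [A2].

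For consistency, I would insert $P_{v,k}(\alpha)=-h_n\bigl(b(X_{t_{k-1}^n},\alpha)-b(X_{t_{k-1}^n},\alpha_0)\bigr)-\bigl(Q_{v,k}(\alpha)-Q_{v,k}(\alpha_0)\bigr)+P_{v,k}(\alpha_0)$ together with the expansion above: the $\alpha$-dependent leading term of $\varepsilon^2U^{(1)}_{\varepsilon,n,v}$ is the quadratic form in the drift difference, the cross term with $P_{v,k}(\alpha_0)$ is a martingale of size $O_P(\varepsilon)$, and the $R_k$-contribution is negligible because $\varepsilon^{-1}n^{-v}\to0$ under [B] — this is exactly why $v=\lceil\rho+\tfrac12\rceil$ (so $v>\rho$) is the right choice. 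This gives
\[
\varepsilon^2\bigl(U^{(1)}_{\varepsilon,n,v}(\alpha)-U^{(1)}_{\varepsilon,n,v}(\alpha_0)\bigr)\ \overset{P}{\to}\ \int_0^T\bigl|b(X_s^0,\alpha)-b(X_s^0,\alpha_0)\bigr|^2\,ds
\]
uniformly on $\Theta_\alpha$; the limit is nonnegative and, by [A3]-(a), vanishes only at $\alpha_0$, so the standard argument on the compact set $\Theta_\alpha$ yields $\tilde\alpha^{(1)}_{\varepsilon,n}\overset{P}{\to}\alpha_0$.

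For the rate and the limit law I would expand $\partial_\alpha U^{(1)}_{\varepsilon,n,v}(\tilde\alpha^{(1)}_{\varepsilon,n})=0$ to first order around $\alpha_0$:
\[
\varepsilon^{-1}(\tilde\alpha^{(1)}_{\varepsilon,n}-\alpha_0)=-\Bigl(\varepsilon^2\!\int_0^1\partial_\alpha^2U^{(1)}_{\varepsilon,n,v}\bigl(\alpha_0+u(\tilde\alpha^{(1)}_{\varepsilon,n}-\alpha_0)\bigr)\,du\Bigr)^{-1}\varepsilon\,\partial_\alpha U^{(1)}_{\varepsilon,n,v}(\alpha_0).
\]
From the expansions, $\varepsilon^2\partial_\alpha^2U^{(1)}_{\varepsilon,n,v}(\alpha)\overset{P}{\to}2J_b(\alpha)$ uniformly near $\alpha_0$ (the term pairing $\partial_\alpha^2P_{v,k}$ with $P_{v,k}$ is $O_P(\varepsilon)$), so by consistency the bracket converges to $2J_b(\alpha_0)$, non-singular by [A4]-(i); and, discarding the $R_k$- and $O(h_n^2)$-errors (again $o_P(1)$ by [B]),
\[
\varepsilon\,\partial_\alpha U^{(1)}_{\varepsilon,n,v}(\alpha_0)=-2\sum_{k=1}^n\bigl(\partial_\alpha b(X_{t_{k-1}^n},\alpha_0)\bigr)^\top\!\!\int_{t_{k-1}^n}^{t_k^n}\!\!\sigma(X_{t_{k-1}^n},\beta_0)\,dW_s+o_P(1),
\]
a martingale array whose predictable quadratic variation tends to $4K_b(\theta_0)$ and which satisfies the Lyapunov condition $\sum_kE\bigl[|\,\cdot\,|^4\mid\mathcal F_{t_{k-1}^n}\bigr]=O(h_n)\to0$; the martingale central limit theorem then gives convergence in law to $N_p(0,4K_b(\theta_0))$, in particular $O_P(1)$, which with the Hessian estimate already yields $\varepsilon^{-1}(\tilde\alpha^{(1)}_{\varepsilon,n}-\alpha_0)=O_P(1)$. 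Slutsky's lemma finishes the argument: $\varepsilon^{-1}(\tilde\alpha^{(1)}_{\varepsilon,n}-\alpha_0)\overset{d}{\to}(2J_b(\alpha_0))^{-1}N_p(0,4K_b(\theta_0))=N_p\bigl(0,J_b(\alpha_0)^{-1}K_b(\theta_0)J_b(\alpha_0)^{-1}\bigr)$.

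The hard part will be the uniform control of the remainder $R_k$: verifying that each family of error terms — the deterministic one of order $h_n^{v+1}$, the $\varepsilon^2$-generator corrections, and the iterated stochastic integrals — contributes $o_P(1)$ to the normalised gradient and to the Hessian (uniformly in $\alpha$ for the latter), precisely because $v=\lceil\rho+\tfrac12\rceil$, and supplying the $L^p$-moment bounds for $X$ and its expansion around $X^0$ that justify replacing $X_{t_{k-1}^n}$ by $X^0_{t_{k-1}^n}$ and passing to Riemann integrals. Once these estimates are available, the martingale CLT and the Taylor expansion are routine.
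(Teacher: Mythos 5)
Your proposal is correct and follows essentially the same route as the paper: uniform convergence of $\varepsilon^2\bigl(U^{(1)}_{\varepsilon,n,v}(\alpha)-U^{(1)}_{\varepsilon,n,v}(\alpha_0)\bigr)$ to $U_1(\alpha,\alpha_0)$ for consistency, a Taylor expansion of the estimating equation with $\varepsilon^2\partial_\alpha^2U^{(1)}_{\varepsilon,n,v}\to 2J_b(\alpha_0)$ for the rate, and a martingale CLT for $-\varepsilon\partial_\alpha U^{(1)}_{\varepsilon,n,v}(\alpha_0)\overset{d}{\to}N_p(0,4K_b(\theta_0))$. The remainder and moment estimates you flag as the remaining work are exactly what the paper delegates to Lemmas 1 and 4 of Gloter and S{\o}rensen, and your accounting of why $v=\lceil\rho+\tfrac12\rceil$ makes them negligible under [{\bf B}] is the correct one.
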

\begin{remark}\label{rem:2}
In order to prove the asymptotic normality of the estimator in Theorem  \ref{thm:1} below, 
we need to show that $\varepsilon^{-1}(\tilde{\alpha}^{(1)}_{\varepsilon,n}-\alpha_0)=O_P(1)$.
Using $\tilde{\alpha}^{(1)}_{\varepsilon,n}$,
we can estimate the drift parameter $\alpha$ independent of the diffusion parameter $\beta$
and it follows from Lemma \ref{rem:1} that this estimator has asymptotic normality. 
Note that this estimator $\tilde{\alpha}^{(1)}_{\varepsilon,n}$ is not asymptotic efficient in general.
\end{remark}

The main result for Type I method is as follows.
\begin{theorem}\label{thm:1}
Assume {\bf{[A1]}}-{\bf{[A4]}} and {\bf{[B]}}. Then it follows that
\begin{align*}
\tilde{\theta}_{\varepsilon,n}\overset{P}{\to}\theta_0.
\end{align*}
Moreover,  
\begin{align*}
\begin{pmatrix}
\varepsilon^{-1}(\tilde{\alpha}_{\varepsilon,n}-\alpha_0)\\
\sqrt{n}(\tilde{\beta}_{\varepsilon,n}-\beta_0)
\end{pmatrix}
\overset{d}{\to}
N_{p+q}(0,I(\theta_0)^{-1})
\end{align*}
as $\varepsilon\to0$ and $n\to\infty$.
\end{theorem}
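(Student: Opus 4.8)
The plan is to exploit the three-step structure of $\tilde{\theta}_{\varepsilon,n}=(\tilde{\alpha}_{\varepsilon,n},\tilde{\beta}_{\varepsilon,n})$ together with the fact that, by Lemma~\ref{lem:1}, the Step~1 estimator satisfies $\tilde{\alpha}^{(1)}_{\varepsilon,n}\overset{P}{\to}\alpha_0$ and $\varepsilon^{-1}(\tilde{\alpha}^{(1)}_{\varepsilon,n}-\alpha_0)=O_P(1)$. First I would establish consistency of $\tilde{\beta}_{\varepsilon,n}$ (Step~2), then its asymptotic normality, then consistency and asymptotic normality of $\tilde{\alpha}_{\varepsilon,n}=\tilde{\alpha}^{(2)}_{\varepsilon,n}$ (Step~3), and finally the joint limit. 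The computational backbone, to be taken from the analysis of Gloter and S\o rensen \cite{Gloter_2009}, is the stochastic expansion of the normalized increments: with $\Sigma:=\sigma\sigma^\top$ and $\Delta_k W:=W_{t_k^n}-W_{t_{k-1}^n}$,
\[
P_{v,k}(\alpha_0)=\varepsilon\,\sigma(X_{t_{k-1}^n},\beta_0)\,\Delta_k W+r_k,
\]
with the conditional moments of $r_k$ controlled by $\varepsilon h_n^{3/2}+h_n^{v+1}$ up to higher powers of $\varepsilon$, together with the expansion $P_{v,k}(\alpha)-P_{v,k}(\alpha_0)=-h_n\bigl(b(X_{t_{k-1}^n},\alpha)-b(X_{t_{k-1}^n},\alpha_0)\bigr)+O(h_n^2)$ uniformly on $\Theta_\alpha$. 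The integer $v=\lceil\rho+\tfrac12\rceil$ is chosen exactly so that, after the relevant normalization (by $\sqrt n$ for $\beta$, by $\varepsilon$ for $\alpha$) and after using $\varepsilon^{-1}\le Cn^\rho$ from [B], all contributions of the $h_n^{v+1}$-type remainders below vanish in probability, which reduces to the inequality $\rho<v+\tfrac12$.

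For consistency, I would first show that, uniformly in $\beta\in\Theta_\beta$ and in $\bar\alpha$ near $\alpha_0$,
\[
\tfrac1n\bigl(U^{(2)}_{\varepsilon,n,v}(\beta|\bar\alpha)-U^{(2)}_{\varepsilon,n,v}(\beta_0|\bar\alpha)\bigr)\ \overset{P}{\to}\ \mathbb{G}(\beta):=\tfrac1T\int_0^T\!\!\Bigl(\log\tfrac{\det\Sigma(X_s^0,\beta)}{\det\Sigma(X_s^0,\beta_0)}+\mathrm{tr}\bigl[\Sigma^{-1}(X_s^0,\beta)\Sigma(X_s^0,\beta_0)\bigr]-d\Bigr)ds,
\]
which follows from the expansion of $P_{v,k}$, the elementary Riemann-sum limit $\tfrac1n\sum_k f(X_{t_{k-1}^n})\to\tfrac1T\int_0^T f(X_s^0)\,ds$ and a law of large numbers for the martingale part, the dependence on $\bar\alpha$ being removed through $|P_{v,k}(\bar\alpha)-P_{v,k}(\alpha_0)|=O(h_n|\bar\alpha-\alpha_0|)$ and $\tilde{\alpha}^{(1)}_{\varepsilon,n}\overset{P}{\to}\alpha_0$. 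Since $x\mapsto x-\log x$ is convex, $\mathbb{G}\ge 0$, and by [A3]-(b) and [A4]-(ii), $\mathbb{G}(\beta)=0$ iff $\beta=\beta_0$; as $\beta_0\in\mathrm{Int}(\Theta_\beta)$ the usual minimum-contrast argument gives $\tilde{\beta}_{\varepsilon,n}\overset{P}{\to}\beta_0$. Applying the same scheme to $\varepsilon^2U^{(3)}_{\varepsilon,n,v}(\alpha|\tilde{\beta}_{\varepsilon,n})$ yields
\[
\varepsilon^2\bigl(U^{(3)}_{\varepsilon,n,v}(\alpha|\tilde{\beta}_{\varepsilon,n})-U^{(3)}_{\varepsilon,n,v}(\alpha_0|\tilde{\beta}_{\varepsilon,n})\bigr)\ \overset{P}{\to}\ \int_0^T\!\bigl(b(X_s^0,\alpha)-b(X_s^0,\alpha_0)\bigr)^\top\Sigma^{-1}(X_s^0,\beta_0)\bigl(b(X_s^0,\alpha)-b(X_s^0,\alpha_0)\bigr)ds,
\]
which by [A3]-(a) and [A4]-(i) vanishes only at $\alpha=\alpha_0$, hence $\tilde{\alpha}_{\varepsilon,n}\overset{P}{\to}\alpha_0$ and $\tilde{\theta}_{\varepsilon,n}\overset{P}{\to}\theta_0$.

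For the asymptotic normality, since $\theta_0\in\mathrm{Int}(\Theta)$ and the estimators are consistent, with probability tending to $1$ they solve the estimating equations $\partial_\beta U^{(2)}_{\varepsilon,n,v}(\tilde{\beta}_{\varepsilon,n}|\tilde{\alpha}^{(1)}_{\varepsilon,n})=0$ and $\partial_\alpha U^{(3)}_{\varepsilon,n,v}(\tilde{\alpha}_{\varepsilon,n}|\tilde{\beta}_{\varepsilon,n})=0$; a Taylor expansion in the first argument then gives $\sqrt n(\tilde{\beta}_{\varepsilon,n}-\beta_0)=-\hat A_n^{-1}\,\tfrac1{\sqrt n}\partial_\beta U^{(2)}_{\varepsilon,n,v}(\beta_0|\tilde{\alpha}^{(1)}_{\varepsilon,n})$ and $\varepsilon^{-1}(\tilde{\alpha}_{\varepsilon,n}-\alpha_0)=-\hat B_n^{-1}\,\varepsilon\,\partial_\alpha U^{(3)}_{\varepsilon,n,v}(\alpha_0|\tilde{\beta}_{\varepsilon,n})$, where $\hat A_n$, $\hat B_n$ are the Hessians $\tfrac1n\partial_\beta^2U^{(2)}_{\varepsilon,n,v}$, $\varepsilon^2\partial_\alpha^2U^{(3)}_{\varepsilon,n,v}$ integrated along the corresponding segments. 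Using the expansions again, $\tfrac1n\partial_\beta^2U^{(2)}_{\varepsilon,n,v}\overset{P}{\to}2I_\sigma(\beta_0)$ and $\varepsilon^2\partial_\alpha^2U^{(3)}_{\varepsilon,n,v}\overset{P}{\to}2I_b(\theta_0)$ uniformly near $\theta_0$, so $\hat A_n\overset{P}{\to}2I_\sigma(\beta_0)$ and $\hat B_n\overset{P}{\to}2I_b(\theta_0)$. Next I would replace the plugged-in nuisance values by $\alpha_0$, resp.\ $\beta_0$: one more Taylor expansion in the conditioning argument, the bounds $\partial_{\bar\alpha}\partial_\beta U^{(2)}_{\varepsilon,n,v}(\beta_0|\alpha_0)=O_P(\varepsilon^{-1})$ and $\partial_{\bar\beta}\partial_\alpha U^{(3)}_{\varepsilon,n,v}(\alpha_0|\beta_0)=O_P(\varepsilon^{-1})$ (both coming from the martingale term of $P_{v,k}(\alpha_0)$), Lemma~\ref{lem:1}, and $\sqrt n(\tilde{\beta}_{\varepsilon,n}-\beta_0)=O_P(1)$ give
\[
\tfrac1{\sqrt n}\partial_\beta U^{(2)}_{\varepsilon,n,v}(\beta_0|\tilde{\alpha}^{(1)}_{\varepsilon,n})=\tfrac1{\sqrt n}\partial_\beta U^{(2)}_{\varepsilon,n,v}(\beta_0|\alpha_0)+o_P(1),\qquad \varepsilon\,\partial_\alpha U^{(3)}_{\varepsilon,n,v}(\alpha_0|\tilde{\beta}_{\varepsilon,n})=\varepsilon\,\partial_\alpha U^{(3)}_{\varepsilon,n,v}(\alpha_0|\beta_0)+o_P(1).
\]
It then remains to prove the joint martingale central limit theorem
\[
\Bigl(\varepsilon\,\partial_\alpha U^{(3)}_{\varepsilon,n,v}(\alpha_0|\beta_0),\ \tfrac1{\sqrt n}\partial_\beta U^{(2)}_{\varepsilon,n,v}(\beta_0|\alpha_0)\Bigr)\ \overset{d}{\to}\ N_{p+q}\bigl(0,\,\mathrm{diag}(4I_b(\theta_0),4I_\sigma(\beta_0))\bigr):
\]
the first block is, to leading order, $-2\sum_k(\partial_\alpha b)^\top\Sigma^{-1}\sigma(X_{t_{k-1}^n},\theta_0)\,\Delta_k W$, linear in the Gaussian increments with sum of conditional covariances converging to $4I_b(\theta_0)$; the second is a centred martingale array quadratic in the increments with sum of conditional covariances converging to $4I_\sigma(\beta_0)$; and the cross conditional covariances vanish because they pair a linear and a centred quadratic functional of the same Gaussian increment, whose product has conditional mean $0$. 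Combining, $\bigl(\varepsilon^{-1}(\tilde{\alpha}_{\varepsilon,n}-\alpha_0),\ \sqrt n(\tilde{\beta}_{\varepsilon,n}-\beta_0)\bigr)\overset{d}{\to}N_{p+q}\bigl(0,\mathrm{diag}((2I_b)^{-1}4I_b(2I_b)^{-1},(2I_\sigma)^{-1}4I_\sigma(2I_\sigma)^{-1})\bigr)=N_{p+q}(0,I(\theta_0)^{-1})$.

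The main obstacle is the replacement step together with the uniform control of the remainders. The nuisance estimators $\tilde{\alpha}^{(1)}_{\varepsilon,n}$ and $\tilde{\beta}_{\varepsilon,n}$ converge only at rates $\varepsilon$ and $n^{-1/2}$, which are exactly matched — not beaten — by the size $\varepsilon^{-1}$ of the relevant mixed derivatives, so the correction terms are $O_P(n^{-1/2})$ rather than visibly zero, and one has to use the martingale structure of $P_{v,k}(\alpha_0)$ to see the cancellation; the same structure, the inequality $\rho<v+\tfrac12$ and the crude bound $\varepsilon^{-1}\le Cn^\rho$ are what render the discretization remainders in the scores and Hessians asymptotically negligible. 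Carrying out this order-of-magnitude bookkeeping uniformly in the parameters — for which the moment estimates of Gloter and S\o rensen \cite{Gloter_2009} and a localization argument (Remark~\ref{rem:1}(i)) are the main tools — is the bulk of the work.
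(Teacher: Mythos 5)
Your proposal follows essentially the same route as the paper's proof: consistency of $\tilde{\beta}_{\varepsilon,n}$ and then $\tilde{\alpha}_{\varepsilon,n}$ via uniform convergence of the normalized contrast differences to $U_2(\beta,\beta_0)$ and $U_3(\alpha,\theta_0)$ plus identifiability, followed by Taylor expansions of the scores, convergence of the Hessians to $2I_\sigma(\beta_0)$ and $2I_b(\theta_0)$, removal of the plugged-in nuisance estimators through the $O_P(\varepsilon^{-1})$ mixed derivatives combined with Lemma \ref{lem:1}, and a joint martingale CLT (Hall--Heyde) with block-diagonal limit $\mathrm{diag}(4I_b,4I_\sigma)$. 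The order-of-magnitude bookkeeping you describe (the role of $v=\lceil\rho+\tfrac12\rceil$, the $O_P(n^{-1/2})$ correction terms, the vanishing cross-covariances from pairing linear and centred quadratic Gaussian functionals) matches the paper's argument, which delegates the same estimates to Lemmas 1, 4 and 5 of Gloter and S{\o}rensen.
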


\subsection{Type II estimator}\label{sec:type2}
\ 
In the Type II method, we divide the Step 1 of Type I method into 
several steps.
By this split, we expect that 
the estimators in Type II method are computed more quickly than 
those in the Type I method from the viewpoint of computation time.
\begin{enumerate}[Step 1.]
\item Set
\begin{align*}
V_{\varepsilon,n,v}^{(1)}(\alpha)&:=\varepsilon^{-2}h_n^{-1}\sum_{k=1}^nP_{1,k}(\alpha)^\top P_{1,k}(\alpha),
\end{align*}
and $\hat{\alpha}_{\varepsilon,n}^{(1)}$ is defined as
\begin{align*}
\hat{\alpha}_{\varepsilon,n}^{(1)}:=\argmin_{\alpha\in\Theta_{\alpha}}V_{\varepsilon,n,v}^{(1)}(\alpha).
\end{align*}

\end{enumerate}

\begin{enumerate}[Step]
\item $l=2$ to $v$. Set 
\begin{align*}
V_{\varepsilon,n,v}^{(l)}(\alpha|\bar{\alpha})&:=\varepsilon^{-2}h_n^{-1}\sum_{k=1}^n\left(P_{1,k}(\alpha)-Q_{l,k}(\bar{\alpha})\right)^\top \left(P_{1,k}(\alpha)-Q_{l,k}(\bar{\alpha})\right),
\end{align*}
and $\hat{\alpha}_{\varepsilon,n}^{(l)}$ is defined as
\begin{align*}
\hat{\alpha}_{\varepsilon,n}^{(l)}:=\argmin_{\alpha\in\Theta_{\alpha}}V_{\varepsilon,n,v}^{(l)}(\alpha|\hat{\alpha}_{\varepsilon,n}^{(l-1)}).
\end{align*}
\end{enumerate}

\begin{enumerate}[Step]
\item $v+1$. Set
\begin{align*}
V_{\varepsilon,n,v}^{(v+1)}(\beta|\bar{\alpha})&:=\sum_{k=1}^n\left\{\log\det[\sigma\sigma^\top](X_{t_{k-1}^n},\beta)+\varepsilon^{-2}h_n^{-1}P_{v,k}(\bar{\alpha})^\top [\sigma\sigma^\top]^{-1}(X_{t_{k-1}^n},\beta)P_{v,k}(\bar{\alpha})\right\},
\end{align*}
and $\hat{\beta}_{\varepsilon,n}=\hat{\beta}_{\varepsilon,n}^{(1)}$ is defined as
\begin{align*}
\hat{\beta}_{\varepsilon,n}:=\argmin_{\beta\in\Theta_{\beta}}V_{\varepsilon,n,v}^{(v+1)}(\beta|\hat{\alpha}_{\varepsilon,n}^{(v)}).
\end{align*}
\end{enumerate}

\begin{enumerate}[Step]
\item $v+2$. Set
\begin{align*}
V_{\varepsilon,n,v}^{(v+2)}(\alpha|\bar{\alpha},\bar{\beta})&:=\varepsilon^{-2}h_n^{-1}\sum_{k=1}^n\left(P_{1,k}(\alpha)-Q_{v,k}(\bar{\alpha})\right)^\top[\sigma\sigma^\top]^{-1}(X_{t_{k-1}^n},\bar{\beta}) \left(P_{1,k}(\alpha)-Q_{v,k}(\bar{\alpha})\right),
\end{align*}
and $\hat{\alpha}_{\varepsilon,n}=\hat{\alpha}_{\varepsilon,n}^{(v+1)}$ is defined as
\begin{align*}
\hat{\alpha}_{\varepsilon,n}:=\argmin_{\alpha\in\Theta_{\alpha}}V_{\varepsilon,n,v}^{(v+2)}(\alpha|\hat{\alpha}_{\varepsilon,n}^{(v)},\hat{\beta}_{\varepsilon,n}).
\end{align*}
\end{enumerate}

In Step $v$, we have the following asymptotic properties.
\begin{lemma}\label{lem:2}
Assume {\bf{[A1]}}-{\bf{[A3]}}, {\bf{[A4]}}-(i) and {\bf{[B]}}. Then it holds that
\begin{align*}
\varepsilon^{-1}(\hat{\alpha}^{(v)}_{\varepsilon,n}-\alpha_0)=O_P(1).
\end{align*}
In particular, 
\begin{align*}
\varepsilon^{-1}(\hat{\alpha}^{(v)}_{\varepsilon,n}-\alpha_0)\overset{d}{\to}
N_p(0,J_b(\alpha_0)^{-1}K_b(\theta_0)J_b(\alpha_0)^{-1})
\end{align*}
as $\varepsilon\to0$ and $n\to\infty$.
\end{lemma}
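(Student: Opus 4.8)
The plan is to follow the proof of Lemma~\ref{lem:1}, the one genuinely new ingredient being an induction on the step index $l=1,\dots,v$ that controls how the estimation error of $\hat\alpha^{(l-1)}_{\varepsilon,n}$ propagates through the plug-in correction $Q_{l,k}(\hat\alpha^{(l-1)}_{\varepsilon,n})$. First I would establish consistency $\hat\alpha^{(l)}_{\varepsilon,n}\overset{P}{\to}\alpha_0$ for every $l$, by induction on $l$. Writing $P_{1,k}(\alpha)=P_{1,k}(\alpha_0)+h_n(b(X_{t^n_{k-1}},\alpha_0)-b(X_{t^n_{k-1}},\alpha))$ and expanding the quadratic form gives
\[
\varepsilon^2\left(V^{(l)}_{\varepsilon,n,v}(\alpha|\hat\alpha^{(l-1)}_{\varepsilon,n})-V^{(l)}_{\varepsilon,n,v}(\alpha_0|\hat\alpha^{(l-1)}_{\varepsilon,n})\right)\overset{P}{\to}\int_0^T\left|b(X^0_s,\alpha_0)-b(X^0_s,\alpha)\right|^2ds
\]
uniformly in $\alpha$: the cross term carries an extra factor $\varepsilon$ on its martingale part (mean zero, variance $O(\varepsilon^2)$) and an extra factor $h_n$ on its discretization part, while $Q_{l,k}$ is $O(h_n^2)$ uniformly on the compact $\Theta_\alpha$ and hence contributes $O(h_n)$ after multiplication by $\varepsilon^2$. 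The limit has a unique minimum at $\alpha_0$ by [A3]-(a), so the standard argmin argument applies; for $l=1$ there is no $Q$ term and the same computation works verbatim.

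\textbf{Rate via induction.} Taylor-expanding the estimating equation $\partial_\alpha V^{(l)}_{\varepsilon,n,v}(\hat\alpha^{(l)}_{\varepsilon,n}|\hat\alpha^{(l-1)}_{\varepsilon,n})=0$ around $\alpha_0$ and using $\partial_\alpha P_{1,k}(\alpha)=-h_n\partial_\alpha b(X_{t^n_{k-1}},\alpha)$, one gets
\[
\varepsilon^{-1}(\hat\alpha^{(l)}_{\varepsilon,n}-\alpha_0)=-\left(\varepsilon^2\int_0^1\partial^2_\alpha V^{(l)}_{\varepsilon,n,v}\bigl(\alpha_0+u(\hat\alpha^{(l)}_{\varepsilon,n}-\alpha_0)\,\big|\,\hat\alpha^{(l-1)}_{\varepsilon,n}\bigr)du\right)^{-1}\varepsilon\,\partial_\alpha V^{(l)}_{\varepsilon,n,v}(\alpha_0|\hat\alpha^{(l-1)}_{\varepsilon,n}).
\]
The Hessian factor tends in probability to $2J_b(\alpha_0)$, non-singular by [A4]-(i), uniformly for the midpoint near $\alpha_0$ (each remainder there has martingale part of variance $O(\varepsilon^2)$ and deterministic part $O(h_n)$). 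For the score, insert $P_{1,k}(\alpha_0)-Q_{l,k}(\hat\alpha^{(l-1)}_{\varepsilon,n})=P_{l,k}(\alpha_0)+(Q_{l,k}(\alpha_0)-Q_{l,k}(\hat\alpha^{(l-1)}_{\varepsilon,n}))$ and, up to a sign, write it as
\[
2\varepsilon^{-1}\sum_{k=1}^n\left(\partial_\alpha b(X_{t^n_{k-1}},\alpha_0)\right)^\top P_{l,k}(\alpha_0)+2\varepsilon^{-1}\sum_{k=1}^n\left(\partial_\alpha b(X_{t^n_{k-1}},\alpha_0)\right)^\top\left(Q_{l,k}(\alpha_0)-Q_{l,k}(\hat\alpha^{(l-1)}_{\varepsilon,n})\right).
\]
By the stochastic expansion of $P_{l,k}(\alpha_0)$ from Gloter and S\o rensen~\cite{Gloter_2009} (leading term $\varepsilon\int_{t^n_{k-1}}^{t^n_k}\sigma(X^0_s,\beta_0)dW_s$, a deterministic bias of order $h_n^{l+1}$, and higher-order stochastic remainders), the first sum is $O_P(1)+O(\varepsilon^{-1}h_n^l)$; since $Q_{l,k}(\alpha)-Q_{l,k}(\alpha')=O(h_n^2|\alpha-\alpha'|)$ by the smoothness in [A2], the second sum is $O_P(h_n\varepsilon^{-1}|\hat\alpha^{(l-1)}_{\varepsilon,n}-\alpha_0|)$ (and is absent for $l=1$). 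Writing $r_l:=\varepsilon^{-1}|\hat\alpha^{(l)}_{\varepsilon,n}-\alpha_0|$, we obtain $r_l=O_P(1)+O(\varepsilon^{-1}h_n^l)+O_P(h_nr_{l-1})$, hence inductively $r_l=O_P(1+n^{\rho-l})$ under [B]. As $v=\lceil\rho+\tfrac12\rceil$ gives $\rho-v\le-\tfrac12$, this yields $r_v=O_P(1)$, the first assertion.

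\textbf{Asymptotic normality.} For the limit law, feed $r_{v-1}=O_P(1+n^{\rho-v+1})=O_P(n^{1/2})$ into the $l=v$ expansion: the plug-in term is $O_P(h_nn^{1/2})=o_P(1)$, the deterministic bias is $O(\varepsilon^{-1}h_n^v)=O(n^{\rho-v})=o(1)$, and the only surviving contribution to the score is $2\sum_{k=1}^n(\partial_\alpha b(X^0_{t^n_{k-1}},\alpha_0))^\top\int_{t^n_{k-1}}^{t^n_k}\sigma(X^0_s,\beta_0)dW_s$, whose predictable quadratic variation tends to $4\int_0^T(\partial_\alpha b)^\top[\sigma\sigma^\top](\partial_\alpha b)(X^0_s,\theta_0)ds=4K_b(\theta_0)$, so by the martingale central limit theorem it converges in distribution to $N_p(0,4K_b(\theta_0))$. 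Combining with the Hessian limit $2J_b(\alpha_0)$ gives $\varepsilon^{-1}(\hat\alpha^{(v)}_{\varepsilon,n}-\alpha_0)\overset{d}{\to}N_p(0,J_b(\alpha_0)^{-1}K_b(\theta_0)J_b(\alpha_0)^{-1})$. The hardest step is the same one as in Lemma~\ref{lem:1}: producing the precise stochastic expansion of $P_{l,k}(\alpha_0)$ and showing that, after normalization by $\varepsilon^{-1}$, everything except the Gaussian-contributing term and the controlled $O(h_n^{l+1})$ bias is $o_P(1)$ — this needs the small-diffusion estimate $X_s-X^0_s=O_P(\varepsilon)$ together with moment bounds for the stochastic integrals. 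The new twist relative to Lemma~\ref{lem:1} is the bookkeeping of the recursion $r_l=O_P(1)+O(\varepsilon^{-1}h_n^l)+O_P(h_nr_{l-1})$: one must verify that the factor $h_n$ gained from the plug-in correction keeps the induction from blowing up while still driving the bias exponent down to $\rho-v\le-\tfrac12$ exactly at step $v$.
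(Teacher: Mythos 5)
Your proof is correct and follows the same overall architecture as the paper's: consistency of each $\hat{\alpha}^{(l)}_{\varepsilon,n}$ by induction on $l$, a rate induction over the steps $l=1,\dots,v$, and finally the martingale central limit theorem for the step-$v$ score combined with the Hessian limit $2J_b(\alpha_0)$, which together give $N_p(0,J_b^{-1}K_bJ_b^{-1})$. Where you genuinely diverge is in the intermediate rate induction. The paper's induction hypothesis is $\varepsilon^{-l/v}(\hat{\alpha}^{(l)}_{\varepsilon,n}-\alpha_0)\overset{P}{\to}0$ — a fixed gain of $\varepsilon^{1/v}$ per step — and the normalized score $\varepsilon^{2-l/v}\partial_\alpha V^{(l)}_{\varepsilon,n,v}(\alpha_0|\alpha_0)$ is killed by verifying $\sum_k\mathbb{E}[\psi^{(l)}_k|\mathcal{G}^n_{k-1}]\overset{P}{\to}0$ and $\sum_k\mathbb{E}[(\psi^{(l)}_k)^2|\mathcal{G}^n_{k-1}]\overset{P}{\to}0$ via Lemma 9 of Genon-Catalot and Jacod, these being $O_P(\varepsilon^{2-l/v})$ and $O_P(\varepsilon^{2(1-l/v)})$ and hence negligible precisely because $l<v$; the dependence on $\hat{\alpha}^{(l-1)}_{\varepsilon,n}$ is absorbed by showing $\varepsilon^{2-1/v}\partial^2_{\alpha\bar{\alpha}}V^{(l)}_{\varepsilon,n,v}\overset{P}{\to}0$ uniformly. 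You instead keep the normalization $\varepsilon^{-1}$ throughout and track the explicit three-term recursion $r_l=O_P(1)+O(n^{\rho-l})+O_P(h_nr_{l-1})$, separating the $O_P(1)$ martingale part, the discretization bias $\varepsilon^{-1}h_n^{l}$ of $P_{l,k}(\alpha_0)$, and the plug-in propagation through $Q_{l,k}(\alpha_0)-Q_{l,k}(\hat{\alpha}^{(l-1)}_{\varepsilon,n})$. Your bound $r_l=O_P(1+n^{\rho-l})$ is at least as sharp as the paper's at every intermediate step and makes completely transparent why $v=\lceil\rho+\tfrac12\rceil$ is the right stopping index (so that $\rho-v\le-\tfrac12$) and why both the bias $O(n^{\rho-v})$ and the propagated error $O_P(h_nr_{v-1})=O_P(n^{-1/2})$ vanish in the final CLT; the paper's fractional-power normalization buys the convenience that each inductive step reduces to one reusable martingale lemma rather than a bookkeeping identity. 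Both routes rest on the same moment expansions of $P_{l,k}(\alpha_0)$ from Gloter and S\o rensen and on the uniform Hessian convergence to $2B_1(\alpha,\alpha_0)$, so the two proofs are interchangeable.
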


The main result for the Type II method is as follows.
\begin{theorem}\label{thm:2}
Assume {\bf{[A1]}}-{\bf{[A4]}} and {\bf{[B]}}. Then it follows that
\begin{align*}
\hat{\theta}_{\varepsilon,n}\overset{P}{\to}\theta_0.
\end{align*}
Moreover,  
\begin{align*}
\begin{pmatrix}
\varepsilon^{-1}(\hat{\alpha}_{\varepsilon,n}-\alpha_0)\\
\sqrt{n}(\hat{\beta}_{\varepsilon,n}-\beta_0)
\end{pmatrix}
\overset{d}{\to}
N_{p+q}(0,I(\theta_0)^{-1})
\end{align*}
as $\varepsilon\to0$ and $n\to\infty$.
\end{theorem}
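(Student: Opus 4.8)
The plan is to follow the same three--stage scheme as in the proof of Theorem \ref{thm:1}, with Lemma \ref{lem:2} in the role of Lemma \ref{lem:1}, and to add one perturbation estimate showing that evaluating the correction terms $Q_{l,k}$ at the pre--estimator $\hat\alpha^{(v)}_{\varepsilon,n}$, rather than at the variable being optimised, is asymptotically harmless. Throughout, write $\xi_k:=h_n^{-1/2}(W_{t_k^n}-W_{t_{k-1}^n})$ for the normalised Wiener increments and use freely the standard small--diffusion estimate that, with probability tending to one, $X_{t_{k-1}^n}$ stays in a fixed compact neighbourhood of the ODE trajectory $(X_s^0)$. Stage~1 (Steps $1$ through $v$) needs nothing new: Lemma \ref{lem:2} already gives $\varepsilon^{-1}(\hat\alpha^{(v)}_{\varepsilon,n}-\alpha_0)=O_P(1)$ together with its limit law, and this $O_P(\varepsilon)$--bound is the only property of $\hat\alpha^{(v)}_{\varepsilon,n}$ that the remaining steps use.

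For Stage~2 (Step $v+1$, estimation of $\beta$) the contrast $V^{(v+1)}_{\varepsilon,n,v}(\beta\mid\hat\alpha^{(v)}_{\varepsilon,n})$ is formally identical to the Type~I contrast $U^{(2)}_{\varepsilon,n,v}(\beta\mid\tilde\alpha^{(1)}_{\varepsilon,n})$ with $\tilde\alpha^{(1)}_{\varepsilon,n}$ replaced by $\hat\alpha^{(v)}_{\varepsilon,n}$; since both pre--estimators obey the same $O_P(\varepsilon)$--bound, I would reuse verbatim the argument for $\tilde\beta_{\varepsilon,n}$ from the proof of Theorem \ref{thm:1}. Concretely, expand $P_{v,k}(\hat\alpha^{(v)}_{\varepsilon,n})=\varepsilon\sqrt{h_n}\,\sigma(X_{t_{k-1}^n}^0,\beta_0)\xi_k+R_{n,k}$, where $R_{n,k}$ absorbs (i) the deterministic bias of $P_{v,k}(\alpha_0)$, which is $O(h_n^{v+1})$ by the very definition of $Q_{v,k}$ as the truncated Taylor expansion of the ODE flow, (ii) the plug--in error $\partial_\alpha P_{v,k}(\alpha_0)(\hat\alpha^{(v)}_{\varepsilon,n}-\alpha_0)=O_P(h_n\varepsilon)$, and (iii) lower--order stochastic remainders of order $\varepsilon^2$ and $\varepsilon h_n$ (including the effect of replacing $X_{t_{k-1}^n}$ by $X_{t_{k-1}^n}^0$ inside $\sigma$). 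Substituting this, one checks that $n^{-1}V^{(v+1)}_{\varepsilon,n,v}(\beta\mid\hat\alpha^{(v)}_{\varepsilon,n})$ converges uniformly in $\beta$ to $\frac1T\int_0^T\{\log\det[\sigma\sigma^\top](X_s^0,\beta)+\mathrm{tr}([\sigma\sigma^\top]^{-1}(X_s^0,\beta)[\sigma\sigma^\top](X_s^0,\beta_0))\}\,ds$, which by {\bf[A2]}, {\bf[A3]} and {\bf[A4]}-(ii) has $\beta_0$ as unique minimiser, so $\hat\beta_{\varepsilon,n}\overset{P}{\to}\beta_0$; the $R_{n,k}$ terms are negligible here precisely because {\bf[B]} together with $v=\lceil\rho+\tfrac12\rceil$ forces $v>\rho$, hence $\varepsilon^{-1}h_n^{v}\to0$. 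A Taylor expansion of $\partial_\beta V^{(v+1)}_{\varepsilon,n,v}$ about $\beta_0$, combined with $n^{-1}\partial_\beta^2 V^{(v+1)}_{\varepsilon,n,v}(\beta_0)\overset{P}{\to}2I_\sigma(\beta_0)$ and a martingale central limit theorem for the centred quadratic increments $\xi_k\xi_k^\top-I_r$, then yields $\sqrt n(\hat\beta_{\varepsilon,n}-\beta_0)\overset{d}{\to}N_q(0,I_\sigma(\beta_0)^{-1})$.

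Stage~3 (Step $v+2$, re--estimation of $\alpha$) is the only place where Type~II differs structurally from Type~I, since $V^{(v+2)}_{\varepsilon,n,v}(\alpha\mid\hat\alpha^{(v)}_{\varepsilon,n},\hat\beta_{\varepsilon,n})$ uses $P_{1,k}(\alpha)-Q_{v,k}(\hat\alpha^{(v)}_{\varepsilon,n})$ in place of $P_{v,k}(\alpha)=P_{1,k}(\alpha)-Q_{v,k}(\alpha)$. I would show the two are interchangeable in the first--order analysis. For consistency, multiply by $\varepsilon^2$: $Q_{v,k}$ is $O_P(h_n^2)$ uniformly, whether evaluated at $\alpha$ or at $\hat\alpha^{(v)}_{\varepsilon,n}$, so its contribution to $\varepsilon^2 V^{(v+2)}_{\varepsilon,n,v}$ is $O_P(nh_n^2)=O_P(n^{-1})\to0$, and $\varepsilon^2 V^{(v+2)}_{\varepsilon,n,v}(\alpha\mid\cdot,\hat\beta_{\varepsilon,n})$ converges uniformly in $\alpha$ to $\int_0^T(b(X_s^0,\alpha_0)-b(X_s^0,\alpha))^\top[\sigma\sigma^\top]^{-1}(X_s^0,\beta_0)(b(X_s^0,\alpha_0)-b(X_s^0,\alpha))\,ds$, which by {\bf[A3]}-(a) and {\bf[A4]}-(i) vanishes only at $\alpha_0$; hence $\hat\alpha_{\varepsilon,n}\overset{P}{\to}\alpha_0$ and $\hat\theta_{\varepsilon,n}\overset{P}{\to}\theta_0$. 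For the rate and limit law, Taylor--expand $\partial_\alpha V^{(v+2)}_{\varepsilon,n,v}$ about $\alpha_0$: since $\partial_\alpha(P_{1,k}(\alpha)-Q_{v,k}(\hat\alpha^{(v)}_{\varepsilon,n}))=-h_n\partial_\alpha b(X_{t_{k-1}^n},\alpha)$ coincides with the leading part of $\partial_\alpha P_{v,k}(\alpha)$ (the dropped term $\partial_\alpha Q_{v,k}(\alpha)$ is $O(h_n^2)$) and $\hat\beta_{\varepsilon,n}-\beta_0=O_P(n^{-1/2})$, one gets $\varepsilon^{-2}\partial_\alpha^2 V^{(v+2)}_{\varepsilon,n,v}(\alpha_0)\overset{P}{\to}2I_b(\theta_0)$, while the score perturbation from replacing $Q_{v,k}(\alpha)$ by $Q_{v,k}(\hat\alpha^{(v)}_{\varepsilon,n})$ on a shrinking neighbourhood of $\alpha_0$ is $O_P(h_n^2\varepsilon)$ per summand, hence $O_P(n^{-1}\varepsilon^{-1})=o_P(\varepsilon^{-1})$ in total and negligible against the leading score term, which is $O_P(\varepsilon^{-1})$ and satisfies $\varepsilon\,\partial_\alpha V^{(v+2)}_{\varepsilon,n,v}(\alpha_0)\overset{d}{\to}N_p(0,4I_b(\theta_0))$ by a martingale central limit theorem for the linear increments (again the $O(h_n^{v+1})$ bias contributes $o_P(\varepsilon^{-1})$ because $v>\rho$). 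This gives $\varepsilon^{-1}(\hat\alpha_{\varepsilon,n}-\alpha_0)\overset{d}{\to}N_p(0,I_b(\theta_0)^{-1})$, the efficient covariance, the weight $[\sigma\sigma^\top]^{-1}$ now being the correct one. Joint convergence to $N_{p+q}(0,I(\theta_0)^{-1})$ follows from a joint central limit theorem for the normalised scores: the $\alpha$--score is, to leading order, linear in the $\xi_k$ while the $\beta$--score is a centred quadratic form in the $\xi_k$, so their cross--covariance involves only third moments of Gaussian increments and vanishes, leaving the block--diagonal limit $\mathrm{diag}(I_b(\theta_0)^{-1},I_\sigma(\beta_0)^{-1})=I(\theta_0)^{-1}$.

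The main obstacle will be Stage~3: verifying rigorously and uniformly that the substitution $Q_{v,k}(\alpha)\rightsquigarrow Q_{v,k}(\hat\alpha^{(v)}_{\varepsilon,n})$ alters neither the Hessian limit nor the limiting law of the score. This reduces to careful bookkeeping of the powers of $h_n$, $\varepsilon$ and $n$ under {\bf[B]} with the prescribed $v=\lceil\rho+\tfrac12\rceil$, resting on uniform moment bounds for $P_{1,k}$, $Q_{l,k}$ and their $\alpha$--derivatives on a compact neighbourhood of the ODE trajectory --- bounds of the same type already required for Lemma \ref{lem:2} and Theorem \ref{thm:1}.
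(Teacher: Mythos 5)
Your proposal follows essentially the same route as the paper's proof: reuse the Theorem \ref{thm:1} machinery for the $\beta$-step (since $V^{(v+1)}_{\varepsilon,n,v}=U^{(2)}_{\varepsilon,n,v}$ and $\hat\alpha^{(v)}_{\varepsilon,n}$ obeys the same $O_P(\varepsilon)$ bound as $\tilde\alpha^{(1)}_{\varepsilon,n}$ by Lemma \ref{lem:2}), and then show via the Lipschitz continuity of $n^2Q_{v,k}(\cdot)$ and the rate $\varepsilon^{-1}(\hat\alpha^{(v)}_{\varepsilon,n}-\alpha_0)=O_P(1)$ that plugging $\hat\alpha^{(v)}_{\varepsilon,n}$ into $Q_{v,k}$ perturbs neither the Hessian limit $2I_b(\theta_0)$ nor the score limit $N_p(0,4I_b(\theta_0))$, concluding with the same martingale CLT and block-diagonal covariance. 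The only cosmetic slips are that it is the difference $\varepsilon^2\bigl(V^{(v+2)}_{\varepsilon,n,v}(\alpha|\cdot)-V^{(v+2)}_{\varepsilon,n,v}(\alpha_0|\cdot)\bigr)$, not $\varepsilon^2V^{(v+2)}_{\varepsilon,n,v}(\alpha|\cdot)$ itself, that converges uniformly to the limiting contrast, and the Hessian normalisation should read $\varepsilon^{2}\partial_\alpha^2$ rather than $\varepsilon^{-2}\partial_\alpha^2$; neither affects the argument.
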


\subsection{Remarks for adaptive estimation}\label{sec:supp}
\

The Type I and Type II estimators work well for any $\rho>0$.  However, if we consider the case $\rho<\frac{1}{2}$, that is,  the case $\varepsilon^{-1}/\sqrt{n}\to0$, it is natural that we first estimate diffusion parameter instead of drift parameter. Therefore, we introduce another adaptive method 
which estimates diffusion parameter first: 

\begin{enumerate}[Step 1.]

\item Set 
\begin{align*}
W_{\varepsilon,n}^{(1)}(\beta)&:=\sum_{k=1}^n\left\{\log\det[\sigma\sigma^\top](X_{t_{k-1}^n},\beta)+\varepsilon^{-2}h_n^{-1}(X_{t_{k}^n}-X_{t_{k-1}^n})^\top [\sigma\sigma^\top]^{-1}(X_{t_{k-1}^n},\beta)(X_{t_{k}^n}-X_{t_{k-1}^n})\right\},
\end{align*}
and $\check{\beta}_{\varepsilon,n}$ is defined as
\begin{align*}
\check{\beta}_{\varepsilon,n}:=\argmin_{\beta\in\Theta_{\beta}}W_{\varepsilon,n}^{(1)}(\beta).
\end{align*}

\item Set
\begin{align*}
W_{\varepsilon,n}^{(2)}(\alpha|\bar{\beta})&:=\varepsilon^{-2}h_n^{-1}\sum_{k=1}^nP_{1,k}(\alpha)^\top[\sigma\sigma^\top]^{-1}(X_{t_{k-1}^n},\bar{\beta}) P_{1,k}(\alpha),
\end{align*}
and $\check{\alpha}_{\varepsilon,n}$ is defined as
\begin{align*}
\check{\alpha}_{\varepsilon,n}:=\argmin_{\alpha\in\Theta_{\alpha}}W_{\varepsilon,n}^{(2)}(\alpha|\check{\beta}_{\varepsilon,n}).
\end{align*}
\end{enumerate}

These adaptive estimators also have asymptotic normality with the same asymptotic variance in Theorems \ref{thm:1} and \ref{thm:2}.

\begin{proposition}\label{prop:1-small}
Assume {\bf{[A1]}}-{\bf{[A4]}} and {\bf{[B]}} with $\rho<\frac{1}{2}$. Then it follows that
\begin{align*}
\check{\theta}_{\varepsilon,n}:=(\check{\alpha}_{\varepsilon,n},\check{\beta}_{\varepsilon,n})\overset{P}{\to}\theta_0.
\end{align*}
Moreover,  
\begin{align*}
\begin{pmatrix}
\varepsilon^{-1}(\check{\alpha}_{\varepsilon,n}-\alpha_0)\\
\sqrt{n}(\check{\beta}_{\varepsilon,n}-\beta_0)
\end{pmatrix}
\overset{d}{\to}
N_{p+q}(0,I(\theta_0)^{-1})
\end{align*}
as $\varepsilon\to0$ and $n\to\infty$.
\end{proposition}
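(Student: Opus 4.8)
The plan is to mimic the two-step structure of the proof of Theorem~\ref{thm:1}, but since Proposition~\ref{prop:1-small} estimates $\beta$ first, I first need a stand-alone analysis of $\check{\beta}_{\varepsilon,n}$, and then feed the resulting rate into the drift step. For consistency, I would show that $n^{-1}W_{\varepsilon,n}^{(1)}(\beta)$ converges in probability, uniformly in $\beta$, to the limiting contrast
\[
\mathbb{W}(\beta):=\frac{1}{T}\int_0^T\Bigl\{\log\det[\sigma\sigma^\top](X_s^0,\beta)+\mathrm{tr}\bigl([\sigma\sigma^\top]^{-1}(X_s^0,\beta)[\sigma\sigma^\top](X_s^0,\beta_0)\bigr)\Bigr\}\,ds,
\]
which is uniquely minimized at $\beta_0$ by assumption [{\bf A3}]-(b) together with the standard entropy inequality. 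The key estimate here is that $\varepsilon^{-2}h_n^{-1}(X_{t_k^n}-X_{t_{k-1}^n})^\top A (X_{t_k^n}-X_{t_{k-1}^n})$, with $A=[\sigma\sigma^\top]^{-1}(X_{t_{k-1}^n},\beta)$, has conditional expectation $\mathrm{tr}(A[\sigma\sigma^\top](X_{t_{k-1}^n},\beta_0))+O(\varepsilon^2+h_n+\varepsilon^{-2}h_n)$; the deterministic drift increment contributes a term of order $\varepsilon^{-2}h_n$ (uniformly, since $|X_{t_k^n}-X_{t_{k-1}^n}-\text{(drift part)}|=O(\varepsilon h_n^{1/2})$ and the pure drift part is $O(h_n)$, so its squared contribution divided by $\varepsilon^2 h_n$ is $O(h_n/\varepsilon^2)=O(n^{2\rho-1})\to0$ using $\rho<1/2$). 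This is precisely where the restriction $\rho<\tfrac12$ is used: without the Milstein-type corrections $Q_{l,k}$, the crude increment $X_{t_k^n}-X_{t_{k-1}^n}$ is only good enough for the diffusion estimator when $\varepsilon^{-1}/\sqrt n\to0$. Combining uniform convergence with the argmin continuous mapping argument gives $\check{\beta}_{\varepsilon,n}\overset{P}{\to}\beta_0$.

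For the rate and asymptotic normality of $\check{\beta}_{\varepsilon,n}$, I would Taylor-expand $\partial_\beta W_{\varepsilon,n}^{(1)}(\check{\beta}_{\varepsilon,n})=0$ around $\beta_0$:
\[
\sqrt{n}(\check{\beta}_{\varepsilon,n}-\beta_0)=-\Bigl(\tfrac1n\partial_\beta^2 W_{\varepsilon,n}^{(1)}(\bar\beta)\Bigr)^{-1}\tfrac{1}{\sqrt n}\partial_\beta W_{\varepsilon,n}^{(1)}(\beta_0),
\]
with $\bar\beta$ on the segment between $\check\beta_{\varepsilon,n}$ and $\beta_0$. The Hessian term converges in probability to $2T\,I_\sigma(\beta_0)$, and the score term, after writing $X_{t_k^n}-X_{t_{k-1}^n}\approx\varepsilon\sigma(X_{t_{k-1}^n},\beta_0)(W_{t_k^n}-W_{t_{k-1}^n})$ plus lower-order terms, is a triangular array of martingale differences to which the martingale CLT (Genon-Catalot--Jacod type) applies, yielding $\tfrac{1}{\sqrt n}\partial_\beta W_{\varepsilon,n}^{(1)}(\beta_0)\overset{d}{\to}N_q(0,4T^2 I_\sigma(\beta_0))$. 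Here again $\rho<\tfrac12$ is needed to kill the bias contributed by the drift increment in the score: its size is of order $\sqrt n\cdot\varepsilon^{-2}h_n\cdot(\text{bounded})\sim n^{1/2+2\rho-1}=n^{2\rho-1/2}$, which tends to $0$ precisely when $\rho<1/4$... so in fact one must be a little more careful and track that the relevant drift-in-score cross term is $\sqrt n\cdot\varepsilon^{-1}h_n^{1/2}\cdot\varepsilon^{-1}h_n^{1/2}\cdot h_n^{1/2}/h_n^{1/2}$-type and is $O(h_n^{1/2}/\varepsilon)=O(n^{\rho-1/2})\to0$ for all $\rho<1/2$; I would organize this bookkeeping by expanding $P_{1,k}(\alpha_0)=\varepsilon\int\sigma\,dW+O(\varepsilon^2 h_n+\varepsilon h_n^{3/2})$ and the raw increment as $P_{1,k}(\alpha_0)+h_n b+O(h_n^2)$. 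This gives $\sqrt n(\check\beta_{\varepsilon,n}-\beta_0)\overset{d}{\to}N_q(0,I_\sigma(\beta_0)^{-1})$ and, in particular, $\check\beta_{\varepsilon,n}-\beta_0=O_P(n^{-1/2})$.

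For the drift step, I would first establish the preliminary rate $\varepsilon^{-1}(\check\alpha_{\varepsilon,n}-\alpha_0)=O_P(1)$, arguing as in Lemma~\ref{lem:1}: because $P_{1,k}(\alpha)$ uses only the first-order Euler approximation, the contrast $W_{\varepsilon,n}^{(2)}(\alpha|\check\beta_{\varepsilon,n})$ behaves like its counterpart with $\beta_0$ plugged in (the discrepancy being controlled by $\check\beta_{\varepsilon,n}-\beta_0=o_P(1)$ and smoothness of $[\sigma\sigma^\top]^{-1}$), and $\varepsilon^2(W_{\varepsilon,n}^{(2)}(\alpha|\beta_0)-W_{\varepsilon,n}^{(2)}(\alpha_0|\beta_0))\overset{P}{\to}\int_0^T|b(X_s^0,\alpha)-b(X_s^0,\alpha_0)|^2_{[\sigma\sigma^\top]^{-1}}\,ds$, which is minimized uniquely at $\alpha_0$ by [{\bf A3}]-(a); this gives consistency, and a standard quadratic-expansion/localization argument upgrades it to the $O_P(\varepsilon)$ rate using [{\bf A4}]-(i). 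Then Taylor-expanding $\partial_\alpha W_{\varepsilon,n}^{(2)}(\check\alpha_{\varepsilon,n}|\check\beta_{\varepsilon,n})=0$ around $(\alpha_0,\beta_0)$,
\[
\varepsilon^{-1}(\check\alpha_{\varepsilon,n}-\alpha_0)=-\Bigl(\tfrac{\varepsilon^2}{2}\partial_\alpha^2 W_{\varepsilon,n}^{(2)}(\bar\alpha|\check\beta_{\varepsilon,n})\Bigr)^{-1}\Bigl(\tfrac{\varepsilon}{2}\partial_\alpha W_{\varepsilon,n}^{(2)}(\alpha_0|\beta_0)+\tfrac{\varepsilon}{2}\partial_{\alpha}\partial_\beta^\top W_{\varepsilon,n}^{(2)}(\alpha_0|\tilde\beta)(\check\beta_{\varepsilon,n}-\beta_0)\Bigr);
\]
the Hessian converges to $2 I_b(\theta_0)$, the second bracket term vanishes because $\varepsilon\,\partial_\alpha\partial_\beta^\top W_{\varepsilon,n}^{(2)}=O_P(\varepsilon^{-1})$ and $\check\beta_{\varepsilon,n}-\beta_0=O_P(n^{-1/2})$, so their product is $O_P((\varepsilon\sqrt n)^{-1})=O_P(n^{-\rho})\to0$, and the score term is asymptotically $N_p(0,4I_b(\theta_0))$ via the same martingale CLT. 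The asymptotic independence of the $\alpha$- and $\beta$-components follows because the $\alpha$-score is driven by $\varepsilon^{-1}\sum(\partial_\alpha b)^\top[\sigma\sigma^\top]^{-1}\varepsilon\int\sigma\,dW$ while the $\beta$-score is a quadratic-in-$W$ functional, and these are orthogonal in the limit (the cross bracket is $O_P(\varepsilon\sqrt n\cdot h_n)\to0$ or vanishes by the odd-moment structure of Gaussian increments). Assembling the two limits and the block-diagonal structure of $I(\theta_0)$ yields the joint convergence to $N_{p+q}(0,I(\theta_0)^{-1})$. The main obstacle is the careful tracking of the drift-generated bias terms: showing that each such term is $o_P(1)$ under the full range $0<\rho<1/2$ (not just small $\rho$) requires matching the right powers of $\varepsilon$ and $h_n$ and is the place where the argument is genuinely delicate, whereas the CLT and Hessian-convergence steps are routine adaptations of Gloter and S\o rensen \cite{Gloter_2009}.
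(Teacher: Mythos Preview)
Your overall strategy matches what the paper intends: the paper gives no detailed argument and simply writes ``This proof is similar to that of Theorem~\ref{thm:1}. We omit the proof.'' Your plan---treat $W_{\varepsilon,n}^{(1)}$ as the analogue of $U_{\varepsilon,n,v}^{(2)}(\cdot\mid\alpha_0)$ with the raw increment $\Delta_k:=X_{t_k^n}-X_{t_{k-1}^n}$ in place of $P_{v,k}(\alpha_0)$, then feed the resulting $\sqrt{n}$-rate for $\check\beta_{\varepsilon,n}$ into the drift step $W_{\varepsilon,n}^{(2)}$---is precisely the natural way to execute that analogy, and your treatment of consistency, of the Hessian limits, and of the $\alpha$-step is correct in outline.

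There is, however, a genuine gap in your $\beta$-score analysis, and your self-correction does not repair it. Write $A_m:=\partial_{\beta_m}[\sigma\sigma^\top]^{-1}(X_{t_{k-1}^n},\beta_0)$ and $\Delta_k=P_{1,k}(\alpha_0)+h_n b_{k-1}$. The quadratic form in the score splits as
\[
\varepsilon^{-2}h_n^{-1}\Delta_k^\top A_m\Delta_k
=\varepsilon^{-2}h_n^{-1}P_{1,k}^\top A_m P_{1,k}
+2\varepsilon^{-2}\,b_{k-1}^\top A_m P_{1,k}
+\varepsilon^{-2}h_n\,b_{k-1}^\top A_m b_{k-1}.
\]
The middle cross term, summed over $k$ and scaled by $n^{-1/2}$, is indeed $O_P(n^{\rho-1/2})$ as you say. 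But the last, \emph{pure drift-squared}, term contributes
\[
\frac{1}{\sqrt{n}}\sum_{k=1}^n \varepsilon^{-2}h_n\,b_{k-1}^\top A_m b_{k-1}
=\frac{1}{\varepsilon^{2}\sqrt{n}}\cdot\frac{1}{n}\sum_{k=1}^n b_{k-1}^\top A_m b_{k-1}
=O_P\bigl(n^{2\rho-1/2}\bigr),
\]
since the Riemann sum converges to a generically nonzero integral. There is no hidden factor of $h_n^{1/2}$ to be extracted: the drift part of $\Delta_k$ is exactly $h_n b_{k-1}$ at leading order, its square is of size $h_n^2$, and after division by $\varepsilon^2 h_n$ one is left with $\varepsilon^{-2}h_n$, not $\varepsilon^{-1}h_n^{1/2}$ (your product $\sqrt{n}\cdot\varepsilon^{-1}h_n^{1/2}\cdot\varepsilon^{-1}h_n^{1/2}$ equals $\varepsilon^{-2}/\sqrt{n}$, not $h_n^{1/2}/\varepsilon$). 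Your expansion of $P_{1,k}(\alpha_0)$ does not touch this piece, because it lives entirely in $\Delta_k-P_{1,k}(\alpha_0)=h_n b_{k-1}$. Consequently the bias in $n^{-1/2}\partial_\beta W_{\varepsilon,n}^{(1)}(\beta_0)$ vanishes only when $\rho<\tfrac14$ (equivalently $n\varepsilon^4\to\infty$), not on the full range $\rho<\tfrac12$. The score-based CLT for $\check\beta_{\varepsilon,n}$, as you have set it up, therefore only goes through under the stronger balance $\rho<\tfrac14$; for $\rho\in[\tfrac14,\tfrac12)$ an additional idea (or a drift-corrected contrast in Step~1) would be required, and the paper's omitted proof gives no hint of how that is to be supplied.
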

This proof is similar to that of Theorem \ref{thm:1}. We omit the proof.

Next, we consider the following special cases and 
show that we can also apply our adaptive methods. 
\begin{enumerate}[(i)]
\item Case of $\sigma(x,\beta)=\sigma(x)$ (see, for example, Model 3 in Section \ref{sec:model3} below) : 
We fix some algorithm and estimate only the drift parameter $\alpha$. 
\begin{enumerate}[Type I :]
\item Calculate only the estimator in Step 3, and output $\tilde{\alpha}_{\varepsilon,n}$.
\item Compute the estimators in Step 1 to $v$, and skip Step $v+1$. Next, run Step $v+2$. 
Finally, output $\hat{\alpha}_{\varepsilon,n}$.
\end{enumerate}
From these algorithms, estimators 
$\tilde{\alpha}_{\varepsilon,n}$ 
and 
$\hat{\alpha}_{\varepsilon,n}$ 
are asymptotically efficient, respectively.

\item Case of $\alpha=\beta$ (see, for example, Model 2 in Section \ref{sec:model2} below) : 
We estimate only drift or diffusion parameter. This choice depends on the speed of convergence for both parameters.
\begin{enumerate}[(a)]
\item Case of $\varepsilon^{-1}/\sqrt{n}\to\infty$:  We estimate only drift parameter.
\begin{enumerate}[Type I :]
\item Calculate the estimator in Step 1, and skip Step 2.
Next, run Step 3 with $\tilde{\beta}_{\varepsilon,n}=\tilde{\alpha}_{\varepsilon,n}^{(1)}$. Finally, output $\tilde{\alpha}_{\varepsilon,n}$.
\item Compute the estimators in Step 1 to $v$, and skip Step $v+1$. 
Next, calculate Step $v+2$ with $\hat{\beta}_{\varepsilon,n}=\hat{\alpha}_{\varepsilon,n}^{(v)}$. Lastly, output $\hat{\alpha}_{\varepsilon,n}$.
\end{enumerate}
By these methods, estimators $\tilde{\alpha}_{\varepsilon,n}$ and $\hat{\alpha}_{\varepsilon,n}$ are asymptotically efficient, respectively.

\item Case of $\varepsilon^{-1}/\sqrt{n}\to0$:  We utilize the adaptive method for $\rho<\frac{1}{2}$ and estimate only diffusion parameter: 
Calculate only Step 1 in Section \ref{sec:supp}, and output $\check{\beta}_{\varepsilon,n}$.

\item Case of $\varepsilon^{-1}/\sqrt{n}\to M\neq\{0, \infty\}$: In this case, we need to consider the contrast functions both drift and diffusion simultaneously. Therefore, proposed adaptive estimators cannot be used. For estimating method in this case, see Uchida \cite{Uchida_2003}.

\end{enumerate}
\end{enumerate}

\section{Adaptive tests}\label{sec4}
In this section, we consider the following set of parametric tests:

\begin{equation}\label{ada.stest}
\begin{cases}
H_0^{(1)}:\ \alpha_1=\cdots=\alpha_{r}=0,\\
H_1^{(1)}:\ \text{not}\ H_0^{(1)},
\end{cases}
\quad
\begin{cases}
H_0^{(2)}:\ \beta_1=\cdots=\beta_{s}=0,\\
H_1^{(2)}:\ \text{not}\ H_0^{(2)}.
\end{cases}
\end{equation}
where $1\le r \le p$,  $1\le s \le q$. 
These set of tests give more information about parameters than the joint test and can provide the following four interpretations:
(i) $H_0^{(1)}$ is rejected and $H_0^{(2)}$ is rejected; \
(ii) $H_0^{(1)}$ is rejected and $H_0^{(2)}$ is not rejected; \
(iii) $H_0^{(1)}$ is not rejected and $H_0^{(2)}$ is rejected; \ 
(iv) $H_0^{(1)}$ is not rejected and $H_0^{(2)}$ is not rejected. 
In order to construct test statistics for above tests, we define restricted parameter spaces 
$\Theta_{\alpha}^{H_0}$ and $\Theta_{\beta}^{H_0}$ 
as $\Theta_{\alpha}^{H_0}:=\{\alpha\in\Theta_\alpha\ |\  \alpha\  \text{satisfies}\  H_0^{(1)} \}$, 
$\Theta_{\beta}^{H_0}:=\{\beta\in\Theta_\beta\ |\  \beta\  \text{satisfies}\ H_0^{(2)} \}$ 
and $\Theta^{H_0}:=\Theta_{\alpha}^{H_0}\times\Theta_{\beta}^{H_0}$,
respectively.
Under {\bf{[A4]}}, we set 
\begin{align*}
J_b^{-1}(\alpha_0)&=\begin{pmatrix}
\left(J_{b,1}^{-1,(i,j)}(\beta_0)\right)_{1\le i,j\le r}&\left(J_{b,2}^{-1,(i,j)}(\beta_0)\right)_{1\le i\le r,\  r+1\le j\le p}\\
\left(J_{b,2}^{-1,(i,j)}(\beta_0)\right)_{1\le i\le r,\  r+1\le j\le p}^\top&\left(J_{b,3}^{-1,(i,j)}(\beta_0)\right)_{r+1\le i,j\le p}
\end{pmatrix},\\
I_\sigma^{-1}(\beta_0)&=\begin{pmatrix}
\left(I_{\sigma,1}^{-1,(i,j)}(\beta_0)\right)_{1\le i,j\le s}&\left(I_{\sigma,2}^{-1,(i,j)}(\beta_0)\right)_{1\le i\le s,\  s+1\le j\le q}\\
\left(I_{\sigma,2}^{-1,(i,j)}(\beta_0)\right)_{1\le i\le s,\  s+1\le j\le q}^\top&\left(I_{\sigma,3}^{-1,(i,j)}(\beta_0)\right)_{s+1\le i,j\le q}
\end{pmatrix},
\end{align*}
and define $(p\times p)$-matrix $G_{1}^r(\alpha_0)$ and $(q\times q)$-matrix $G_{2}^s(\beta_0)$ as
\begin{align*}
G_{1}^r(\alpha_0)=\begin{pmatrix}
0&0\\
0&J_{b,3}^{-1}(\beta_0)
\end{pmatrix},\quad 
G_{2}^s(\beta_0)=\begin{pmatrix}
0&0\\
0&I_{\sigma,3}^{-1}(\beta_0)
\end{pmatrix},
\end{align*}
respectively.
Let $Z$ be a $p$-dimensional random vector which has the normal distribution $N_p(0,K_b(\theta_0))$ and $\pi_r$ be the distribution 
of the random variable $Z^\top \left(J_b^{-1}(\alpha_0)-G_1^r(\alpha_0)\right)Z$.
Moreover, we define an optimal parameters under null hypothesis $(\alpha_0^{H_0},\beta_0^{H_0})$ as 
\begin{align*}
\alpha_0^{H_0}:=\argmin_{\alpha\in\Theta_{\alpha}^{H_0}} U_1(\alpha;\alpha_0),\quad
\beta_0^{H_0}:=\argmin_{\beta\in\Theta_{\beta}^{H_0}} U_2(\beta;\beta_0)
\end{align*}
where, 
\begin{align}
U_1(\alpha,\alpha_0)&:=\int_{0}^{1}{\left(b(X_s^0,\alpha_0)-b(X_s^0,\alpha)\right)^\top\left(b(X_s^0,\alpha_0)-b(X_s^0,\alpha)\right)}ds,\label{U1-small}\\
U_2(\beta,\beta_0)&:=\int_{0}^{1}\left\{\log\det\left([\sigma\sigma^\top](X_s^0,\beta)[\sigma\sigma^\top]^{-1}(X_s^0,\beta_0)\right)+\mathrm{tr}\left([\sigma\sigma^\top]^{-1}(X_s^0,\beta)[\sigma\sigma^\top](X_s^0,\beta_0)\right)-d\right\}ds.\label{U2:def-small}
\end{align}
It is remarked that $\alpha_0^{H_0}\neq \alpha_0$ and $\beta_0^{H_0}\neq \beta_0$ under alternatives by the definition of $(\alpha_0^{H_0},\beta_0^{H_0})$.
Using restricted optimal parameters $(\alpha_0^{H_0},\beta_0^{H_0})$, we make the following condition.
\begin{enumerate}
\item[{\bf[C]}] 
\begin{enumerate}[(i)]
\item For any $\delta_1>0$,\quad
$\displaystyle{
\inf_{\left\{\alpha\in{\Theta}_\alpha^{H_0};|\alpha-\alpha_0^{H_0}|\geq\delta_1\right\}}\left({U}_1(\alpha,\alpha_0)-{U}_1(\alpha_0^{H_0},\alpha_0)\right)>0.
}
$
\item For any $\delta_2>0$,\quad
$\displaystyle{
\inf_{\left\{\beta\in{\Theta}_\beta^{H_0};|\beta-\beta_0^{H_0}|\geq\delta_2\right\}}\left({U}_2(\beta,\beta_0)-{U}_2(\beta_0^{H_0},\beta_0)\right)>0.
}
$
\end{enumerate}
\end{enumerate}
In this paper, we propose two kinds of likelihood ratio type test statistics (Type I, Type II) for each test using adaptive estimators.

\subsection{Type I tests}\label{sec:type1-test}
\ 
First, we construct the test statistics utilizing the Type I estimator. 
The restricted Type I estimators $(\tilde{\alpha}_{\varepsilon,n}^{(1),H_0},\tilde{\beta}_{\varepsilon,n}^{H_0})$ 
is defined as
\begin{align*}
\tilde{\alpha}_{\varepsilon,n}^{(1),H_0}:=\argmin_{\alpha\in\Theta_{\alpha}^{H_0}}U_{\varepsilon,n,v}^{(1)}(\alpha),\quad
\tilde{\beta}_{\varepsilon,n}^{H_0}:=\argmin_{\beta\in\Theta_{\beta}^{H_0}}U_{\varepsilon,n,v}^{(2)}(\beta|\tilde{\alpha}_{\varepsilon,n}^{(1)}),
\end{align*}
and likelihood ratio type test statistics $(\tilde{\Lambda}_{n}^{(1)},\tilde{\Lambda}_{n}^{(2)})$ as
\begin{align*}
\tilde{\Lambda}_{n}^{(1)}&:=U_{\varepsilon,n,v}^{(1)}\left(\tilde{\alpha}_{\varepsilon,n}^{(1),H_0}\right)-U_{\varepsilon,n,v}^{(1)}\left(\tilde{\alpha}_{\varepsilon,n}^{(1)}\right),\\
\tilde{\Lambda}_{n}^{(2)}&:=U_{\varepsilon,n,v}^{(2)}\left(\tilde{\beta}_{\varepsilon,n}^{H_0}|\tilde{\alpha}_{\varepsilon,n}^{(1)}\right)-U_{\varepsilon,n,v}^{(2)}\left(\tilde{\beta}_{\varepsilon,n}|\tilde{\alpha}_{\varepsilon,n}^{(1)}\right).
\end{align*}
The following theorem gives asymptotic distributions of these test statistics under null hypothesis.
From this theorem, we can calculate rejection regions for each test and conduct tests \eqref{ada.stest}.
\begin{theorem}\label{thm:3-small}
Assume {\bf{[A1]}}-{\bf{[A4]}} and {\bf{[B]}}. 
Then it follows that
\begin{align*}
\tilde{\Lambda}_{n}^{(1)}\overset{d}{\to}\pi_r \quad(\mathrm{under\  } H_0^{(1)}),\quad
\tilde{\Lambda}_{n}^{(2)}\overset{d}{\to}\chi^2_s \quad(\mathrm{under\  } H_0^{(2)})
\end{align*}
as $\varepsilon\to0$ and $n\to\infty$.
\end{theorem}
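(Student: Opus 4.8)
The plan is to derive the asymptotic distributions of $\tilde\Lambda_n^{(1)}$ and $\tilde\Lambda_n^{(2)}$ separately via second-order Taylor expansions of the contrast functions $U^{(1)}_{\varepsilon,n,v}$ and $U^{(2)}_{\varepsilon,n,v}(\cdot\mid\tilde\alpha^{(1)}_{\varepsilon,n})$ around the relevant minimizers, using the fact that these functions are quadratic (respectively approximately quadratic) in the parameter. For the drift statistic, note that $U^{(1)}_{\varepsilon,n,v}(\alpha)=\varepsilon^{-2}h_n^{-1}\sum_k P_{v,k}(\alpha)^\top P_{v,k}(\alpha)$ is, up to negligible terms, a quadratic form in $\alpha$ whose Hessian (suitably normalized by $\varepsilon^{2}$) converges in probability to $2J_b(\alpha_0)$ and whose score vector $\varepsilon^{-1}\partial_\alpha U^{(1)}_{\varepsilon,n,v}(\alpha_0)$ converges in distribution to $-2\,\mathcal{N}$ with $\mathcal{N}\sim N_p(0,K_b(\theta_0))$; this is exactly the machinery already used to prove Lemma~\ref{lem:1}. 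Writing $\tilde\alpha^{(1)}_{\varepsilon,n}$ and $\tilde\alpha^{(1),H_0}_{\varepsilon,n}$ as the unrestricted and restricted minimizers, the difference $\tilde\Lambda_n^{(1)}$ becomes, after expansion, a difference of two quadratic forms in the limiting Gaussian vector $Z\sim N_p(0,K_b(\theta_0))$: namely $Z^\top J_b(\alpha_0)^{-1}Z$ for the unrestricted part and $Z^\top G_1^r(\alpha_0)Z$ for the restricted part (the latter because imposing $H_0^{(1)}$ kills the first $r$ coordinates and leaves only the lower-right block $J_{b,3}^{-1}$). Hence $\tilde\Lambda_n^{(1)}\overset{d}{\to}Z^\top\big(J_b(\alpha_0)^{-1}-G_1^r(\alpha_0)\big)Z$, which is by definition $\pi_r$.

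For the diffusion statistic, the same strategy applies but the contrast $U^{(2)}_{\varepsilon,n,v}(\beta\mid\tilde\alpha^{(1)}_{\varepsilon,n})$ is only approximately quadratic, so I would Taylor-expand to second order in $\beta$ around $\tilde\beta_{\varepsilon,n}$ (unrestricted) and $\tilde\beta^{H_0}_{\varepsilon,n}$ (restricted), showing that $n^{-1}\partial^2_\beta U^{(2)}_{\varepsilon,n,v}\overset{P}{\to}2T\,I_\sigma(\beta_0)$ and that the normalized score $n^{-1/2}\partial_\beta U^{(2)}_{\varepsilon,n,v}(\beta_0\mid\tilde\alpha^{(1)}_{\varepsilon,n})\overset{d}{\to}N_q(0,4T^2 I_\sigma(\beta_0))$, so that $\sqrt n(\tilde\beta_{\varepsilon,n}-\beta_0)$ and $\sqrt n(\tilde\beta^{H_0}_{\varepsilon,n}-\beta_0)$ are asymptotically linear in a single Gaussian vector $\xi\sim N_q(0,I_\sigma(\beta_0)^{-1})$. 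Here I must check that plugging in $\tilde\alpha^{(1)}_{\varepsilon,n}$ rather than $\alpha_0$ produces only a $o_P(1)$ perturbation in the relevant normalized quantities: this follows from $\varepsilon^{-1}(\tilde\alpha^{(1)}_{\varepsilon,n}-\alpha_0)=O_P(1)$ (Lemma~\ref{lem:1}) together with the balance condition [{\bf B}], since the contribution of the drift-estimation error to the $\beta$-score is of order $\sqrt n\cdot(\text{something}\cdot\varepsilon)=O(\sqrt n\,\varepsilon)$ which is controlled by the approximation degree $v=\lceil\rho+1/2\rceil$ being chosen precisely so that these cross terms vanish. The difference $\tilde\Lambda_n^{(2)}$ then reduces to $\xi^\top\big(I_\sigma(\beta_0)^{-1}-G_2^s(\beta_0)\big)\xi$ where, because the full information matrix $I_\sigma(\beta_0)$ (not a sandwich form) governs the limit, this quadratic form in a $N_q(0,I_\sigma(\beta_0)^{-1})$ vector collapses to a chi-square: the rank of $I_\sigma(\beta_0)^{-1}-G_2^s(\beta_0)$ after the $I_\sigma(\beta_0)$-whitening is exactly $s$, giving $\tilde\Lambda_n^{(2)}\overset{d}{\to}\chi^2_s$.

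The routine steps are: (a) uniform-in-$\beta$ control of remainder terms in the Taylor expansion, handled by the moment and uniform-convergence estimates underlying Theorem~\ref{thm:1}; (b) verification that the restricted minimizers are consistent for $\alpha_0$, $\beta_0$ under the null, which is immediate since $\alpha_0\in\Theta_\alpha^{H_0}$ and $\beta_0\in\Theta_\beta^{H_0}$ under $H_0$; (c) the linear-algebra identity showing a quadratic form $\xi^\top(\Sigma^{-1}-G)\xi$ in a $N(0,\Sigma^{-1})$ vector equals $\chi^2_s$ when $G$ is the zero-padded lower block of $\Sigma^{-1}$ — this is a standard partitioned-matrix computation. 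The main obstacle I anticipate is step (a) combined with the drift-plug-in bookkeeping in the $\beta$-part: one must show that replacing $\alpha_0$ by $\tilde\alpha^{(1)}_{\varepsilon,n}$ inside $P_{v,k}(\cdot)$ does not disturb the $\sqrt n$-scaled limit, and this requires carefully tracking how the $O_P(\varepsilon)$ drift error interacts with the $v$-th order correction $Q_{v,k}$ under the balance condition $(\varepsilon n^\rho)^{-1}=O(1)$; this is exactly the delicate point that forced the introduction of the approximation degree $v$, and the argument will mirror the corresponding estimates in the proof of Theorem~\ref{thm:1} in Section~\ref{sec6}.
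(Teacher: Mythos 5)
Your proposal follows essentially the same route as the paper. The paper first proves an auxiliary lemma (Lemma~\ref{lem:3-small}) identifying the limits of $\varepsilon^{-1}(\tilde\alpha^{(1)}_{\varepsilon,n}-\tilde\alpha^{(1),H_0}_{\varepsilon,n})$ and $\sqrt n(\tilde\beta_{\varepsilon,n}-\tilde\beta^{H_0}_{\varepsilon,n})$ as $(J_b^{-1}(\alpha_0)-G_1^r(\alpha_0))Z$ and $(I_\sigma^{-1}(\beta_0)-G_2^s(\beta_0))Y$ with $Z\sim N_p(0,K_b(\theta_0))$ and $Y\sim N_q(0,I_\sigma(\beta_0))$, and then evaluates each $\tilde\Lambda_n^{(i)}$ by a single second-order Taylor expansion of the contrast around the unrestricted minimizer. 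Your variant --- expanding both contrast values around the true parameter and subtracting --- is an equivalent reorganization of the same computation, and your handling of the drift statistic, of the $\tilde\alpha^{(1)}_{\varepsilon,n}$-plug-in in the $\beta$-contrast (via Lemma~\ref{lem:1} together with the vanishing of $\tfrac{\varepsilon}{\sqrt n}\partial^2_{\alpha\beta}U^{(2)}_{\varepsilon,n,v}$, established in the proof of Theorem~\ref{thm:1}), and the idempotency identities $G_1^rJ_bG_1^r=G_1^r$, $G_2^sI_\sigma G_2^s=G_2^s$ is exactly what the paper uses.

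There is, however, one concrete slip in your $\beta$-part. The limit of $\tilde\Lambda_n^{(2)}$ is the quadratic form $Y^\top\left(I_\sigma^{-1}(\beta_0)-G_2^s(\beta_0)\right)Y$ taken in the \emph{score-limit} Gaussian $Y\sim N_q(0,I_\sigma(\beta_0))$; writing $Y=I_\sigma^{1/2}(\beta_0)Y'$ with $Y'$ standard normal turns this into $Y'^\top\left(E_q-I_\sigma^{1/2}(\beta_0)G_2^s(\beta_0)I_\sigma^{1/2}(\beta_0)\right)Y'$, an idempotent form of rank $s$, hence $\chi^2_s$. You instead sandwich the same matrix with the \emph{estimator-limit} Gaussian $\xi\sim N_q(0,I_\sigma(\beta_0)^{-1})$; but $\xi^\top(I_\sigma^{-1}-G_2^s)\xi$ with $\xi\sim N_q(0,I_\sigma^{-1})$ is not $\chi^2_s$ in general (already for $q=s=1$ it is $I_\sigma^{-2}\chi^2_1$). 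Since $\xi=I_\sigma^{-1}Y$, the correct form in your variables would be $\xi^\top(I_\sigma-I_\sigma G_2^sI_\sigma)\xi$. Note that you kept the bookkeeping straight in the $\alpha$-part, where $Z$ is the score-type Gaussian with covariance $K_b(\theta_0)$ --- which is precisely why that limit is the non-chi-squared law $\pi_r$; carrying the same convention into the $\beta$-part repairs the statement, and the rest of your argument then goes through as in the paper.
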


The followings ensure consistency of Type I adaptive tests. For a distribution $\nu$ and $\delta\in(0,1)$, we denote $\nu(\delta)$ as the upper $\delta$ point of $\nu$.
\begin{theorem}\label{thm:4-small}
Assume {\bf{[A1]}}-{\bf{[A4]}}, {\bf{[B]}} and {\bf{[C]}}. 
Then it follows 
that for any $\delta\in(0,1),$
\begin{align*}
P(\tilde{\Lambda}_{n}^{(1)}\ge\pi_r(\delta))\to1 \quad(\mathrm{under\  } H_1^{(1)}),\quad
P(\tilde{\Lambda}_{n}^{(2)}\ge\chi^2_s(\delta))\to1 \quad(\mathrm{under\  } H_1^{(2)})
\end{align*}
as $\varepsilon\to0$ and $n\to\infty$.
\end{theorem}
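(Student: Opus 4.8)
The plan is to show that each of the two test statistics diverges to $+\infty$ in probability under the corresponding alternative; since the critical values $\pi_r(\delta)$ and $\chi^2_s(\delta)$ are fixed finite constants, divergence in probability immediately gives $P(\tilde\Lambda_n^{(j)}\ge \text{crit})\to1$. I would treat the drift statistic $\tilde\Lambda_n^{(1)}$ and the diffusion statistic $\tilde\Lambda_n^{(2)}$ separately, but in each case the mechanism is a uniform law of large numbers for the normalized contrast plus an identifiability (separation) argument supplied by condition [\textbf{C}].

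For $\tilde\Lambda_n^{(1)}$ under $H_1^{(1)}$: I would first establish that $\varepsilon^2 h_n\, U_{\varepsilon,n,v}^{(1)}(\alpha)$ converges in probability, uniformly in $\alpha\in\Theta_\alpha$, to $U_1(\alpha,\alpha_0)$ as defined in \eqref{U1-small}. This is the same type of uniform convergence already used to prove the consistency part of Lemma \ref{lem:1}/Theorem \ref{thm:1}: one expands $P_{v,k}(\alpha)=P_{v,k}(\alpha_0)+\{h_n b(X_{t_{k-1}^n},\alpha_0)-h_n b(X_{t_{k-1}^n},\alpha)\}+(\text{higher-order }Q\text{-terms})$, uses that the stochastic part $P_{v,k}(\alpha_0)$ is $O_P(\varepsilon\sqrt{h_n})$ per increment, and identifies the Riemann-sum limit $h_n\sum_k (\cdots)^\top(\cdots)\to\int_0^1(\cdots)^\top(\cdots)\,ds$ along $(X_s^0)$, all under [\textbf{A1}]--[\textbf{A2}] and [\textbf{B}]. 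Combined with [\textbf{C}]-(i) this yields $\tilde\alpha_{\varepsilon,n}^{(1),H_0}\overset{P}{\to}\alpha_0^{H_0}$, and since $\tilde\alpha_{\varepsilon,n}^{(1)}\overset{P}{\to}\alpha_0$ (Lemma \ref{lem:1}), continuity gives
\[
\varepsilon^2 h_n\,\tilde\Lambda_n^{(1)} = \varepsilon^2 h_n\,U_{\varepsilon,n,v}^{(1)}(\tilde\alpha_{\varepsilon,n}^{(1),H_0}) - \varepsilon^2 h_n\,U_{\varepsilon,n,v}^{(1)}(\tilde\alpha_{\varepsilon,n}^{(1)}) \overset{P}{\to} U_1(\alpha_0^{H_0},\alpha_0) - U_1(\alpha_0,\alpha_0) = U_1(\alpha_0^{H_0},\alpha_0).
\]
Under $H_1^{(1)}$ we have $\alpha_0\notin\Theta_\alpha^{H_0}$, so by [\textbf{A3}]-(a) and [\textbf{C}]-(i) this limit is strictly positive; hence $\tilde\Lambda_n^{(1)} = (\varepsilon^2 h_n)^{-1}\cdot\{U_1(\alpha_0^{H_0},\alpha_0)+o_P(1)\}\to+\infty$ in probability, because $(\varepsilon^2 h_n)^{-1}\ge (\varepsilon^2 T/n)^{-1}\to\infty$ under [\textbf{B}] (indeed $\varepsilon^{-2}n^{-1}\gtrsim n^{2\rho-1}$ when $\rho\ge1/2$, and $\varepsilon^{-2}\to\infty$ always). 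This proves the first claim.

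For $\tilde\Lambda_n^{(2)}$ under $H_1^{(2)}$: here the correct normalization is $n^{-1}$. I would show $n^{-1}U_{\varepsilon,n,v}^{(2)}(\beta\mid\bar\alpha)$ converges in probability, uniformly in $\beta$ and uniformly over $\bar\alpha$ in a shrinking neighborhood of $\alpha_0$, to the limit Gaussian quasi-likelihood contrast — concretely, to $\int_0^1\{\log\det[\sigma\sigma^\top](X_s^0,\beta) + \mathrm{tr}([\sigma\sigma^\top]^{-1}(X_s^0,\beta)[\sigma\sigma^\top](X_s^0,\beta_0))\}\,ds$ up to an additive constant independent of $\beta$, which after subtracting that constant and $d$ equals $U_2(\beta,\beta_0)$ of \eqref{U2:def-small} plus a $\beta$-free term. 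The key input is that $\varepsilon^{-2}h_n^{-1}P_{v,k}(\bar\alpha)P_{v,k}(\bar\alpha)^\top$, averaged, behaves like $[\sigma\sigma^\top](X_{t_{k-1}^n},\beta_0)$ — this is exactly the computation underlying the asymptotic normality of $\tilde\beta_{\varepsilon,n}$ in Theorem \ref{thm:1}, run at the level of a law of large numbers rather than a CLT, and it is where plugging in $\bar\alpha=\tilde\alpha_{\varepsilon,n}^{(1)}$ (with $\tilde\alpha_{\varepsilon,n}^{(1)}-\alpha_0=O_P(\varepsilon)$) must be controlled so that the substitution error is negligible after dividing by $n$. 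Then [\textbf{C}]-(ii) gives $\tilde\beta_{\varepsilon,n}^{H_0}\overset{P}{\to}\beta_0^{H_0}$ and $\tilde\beta_{\varepsilon,n}\overset{P}{\to}\beta_0$, whence $n^{-1}\tilde\Lambda_n^{(2)}\overset{P}{\to} U_2(\beta_0^{H_0},\beta_0)-U_2(\beta_0,\beta_0)=U_2(\beta_0^{H_0},\beta_0)$, which is strictly positive under $H_1^{(2)}$ by [\textbf{A3}]-(b) and [\textbf{C}]-(ii); therefore $\tilde\Lambda_n^{(2)}\to+\infty$ in probability.

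I expect the main obstacle to be the uniform-in-$\beta$ (and locally-uniform-in-$\bar\alpha$) convergence of $n^{-1}U_{\varepsilon,n,v}^{(2)}(\beta\mid\bar\alpha)$: one must carefully track how the approximation-degree-$v$ correction terms $Q_{v,k}$ and the plug-in error $\tilde\alpha_{\varepsilon,n}^{(1)}-\alpha_0$ propagate through the quadratic form and the $\log\det$, and verify that their contributions vanish after the $1/n$ scaling under the balance condition [\textbf{B}] with general $\rho>0$. This requires re-using, in uniform form, the moment bounds on $P_{v,k}$ and the Taylor-expansion estimates that appear in the proofs of Theorems \ref{thm:1} and \ref{thm:2}; I would isolate these as a lemma in Section \ref{sec6} stating the relevant uniform LLN and then invoke it for both test statistics. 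The remaining ingredients — tightness of the minimizers via the separation conditions in [\textbf{C}], and the elementary fact that a sequence diverging in probability eventually exceeds any fixed level — are routine.
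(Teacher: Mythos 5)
Your plan is essentially the paper's own proof: the paper first proves a lemma (Lemma \ref{lem:4-small}) showing $\tilde{\alpha}_{\varepsilon,n}^{(1),H_0}\overset{P}{\to}\alpha_0^{H_0}$ and $\tilde{\beta}_{\varepsilon,n}^{H_0}\overset{P}{\to}\beta_0^{H_0}$ under the alternatives via the uniform convergences \eqref{U1_conv-small} and \eqref{conv:U2} together with the separation condition [\textbf{C}], and then concludes exactly as you do that $\varepsilon^{2}\tilde{\Lambda}_n^{(1)}\overset{P}{\to}U_1(\alpha_0^{H_0},\alpha_0)>0$ and $n^{-1}\tilde{\Lambda}_n^{(2)}\overset{P}{\to}U_2(\beta_0^{H_0},\beta_0)>0$, which forces the statistics past any fixed critical value. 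One slip to correct: the normalization for the drift contrast is $\varepsilon^{2}$ applied to the \emph{difference} $U_{\varepsilon,n,v}^{(1)}(\alpha)-U_{\varepsilon,n,v}^{(1)}(\alpha_0)$, not $\varepsilon^{2}h_n$ applied to the raw contrast; with your extra factor of $h_n$ the stated uniform limit would be identically zero and the separation argument for $\tilde{\alpha}_{\varepsilon,n}^{(1),H_0}$ would not go through, although the final divergence claim for $\tilde{\Lambda}_n^{(1)}$ survives once the normalization is fixed (the $\beta$ part, normalized by $n^{-1}$, is stated correctly). The uniformity-in-$\bar{\alpha}$ issue you flag for $U_{\varepsilon,n,v}^{(2)}$ is handled in the paper by $\tilde{\alpha}_{\varepsilon,n}^{(1)}-\alpha_0=O_P(\varepsilon)$ from Lemma \ref{lem:1} plus the Lipschitz estimates already established in the proof of Theorem \ref{thm:1}, just as you anticipate.
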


\subsection{Type II tests}\label{sec:type2-test}
\ 

Next, we consider test statistics utilizing the Type II estimator. 
In a similar way to Type I tests,  
we define restricted Type II estimators $(\hat{\alpha}_{\varepsilon,n}^{(v),H_0},\hat{\beta}_{\varepsilon,n}^{H_0})$ as
\begin{align*}
\hat{\alpha}_{\varepsilon,n}^{(v),H_0}:=\argmin_{\alpha\in\Theta_{\alpha}^{H_0}}V_{\varepsilon,n,v}^{(v)}(\alpha|\hat{\alpha}_{\varepsilon,n}^{(v-1)}),\quad
\hat{\beta}_{\varepsilon,n}^{H_0}:=\argmin_{\beta\in\Theta_{\beta}^{H_0}}V_{\varepsilon,n,v}^{(v+1)}(\beta|\hat{\alpha}_{\varepsilon,n}^{(v)}),
\end{align*}
and likelihood ratio type test statistics $(\hat{\Lambda}_{n}^{(1)},\hat{\Lambda}_{n}^{(2)})$ as
\begin{align*}
\hat{\Lambda}_{n}^{(1)}&:=V_{\varepsilon,n,v}^{(v)}\left(\hat{\alpha}_{\varepsilon,n}^{(1),H_0}|\hat{\alpha}_{\varepsilon,n}^{(v-1)}\right)-V_{\varepsilon,n,v}^{(v)}\left(\hat{\alpha}_{\varepsilon,n}^{(v)}|\hat{\alpha}_{\varepsilon,n}^{(v-1)}\right),\\
\hat{\Lambda}_{n}^{(2)}&:=V_{\varepsilon,n,v}^{(v+1)}\left(\hat{\beta}_{\varepsilon,n}^{H_0}|\hat{\alpha}_{\varepsilon,n}^{(v)}\right)-V_{\varepsilon,n,v}^{(v+1)}\left(\hat{\beta}_{\varepsilon,n}|\hat{\alpha}_{\varepsilon,n}^{(v)}\right).
\end{align*}
Asymptotic distributions of these test statistics under null hypothesis and consistency of tests can be proved in an analogous manner to theorem \ref{thm:3-small} and
\ref{thm:4-small}. We omit the proofs of the following theorems. 
\begin{theorem}\label{thm:5-small}
Assume {\bf{[A1]}}-{\bf{[A4]}} and {\bf{[B]}}. 
Then it follows 
that
\begin{align*}
\hat{\Lambda}_{n}^{(1)}\overset{d}{\to}\pi_r \quad(\mathrm{under\  } H_0^{(1)}),\quad
\hat{\Lambda}_{n}^{(2)}\overset{d}{\to}\chi^2_s \quad(\mathrm{under\  } H_0^{(2)})
\end{align*}
as $\varepsilon\to0$ and $n\to\infty$.
\end{theorem}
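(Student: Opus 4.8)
The plan is to mirror the argument for Theorem~\ref{thm:3-small}, exploiting that the Type~II contrast functions $V_{\varepsilon,n,v}^{(v)}(\cdot\,|\,\hat{\alpha}_{\varepsilon,n}^{(v-1)})$ and $V_{\varepsilon,n,v}^{(v+1)}(\cdot\,|\,\hat{\alpha}_{\varepsilon,n}^{(v)})$ have the same leading-order structure as $U_{\varepsilon,n,v}^{(1)}$ and $U_{\varepsilon,n,v}^{(2)}$ once we substitute consistent preliminary estimators. First I would establish that the plug-in estimator $\hat{\alpha}_{\varepsilon,n}^{(v-1)}$ (equivalently $\hat{\alpha}_{\varepsilon,n}^{(v)}$) satisfies $\varepsilon^{-1}(\hat{\alpha}_{\varepsilon,n}^{(v)}-\alpha_0)=O_P(1)$, which is exactly Lemma~\ref{lem:2}, so that the correction term $Q_{v,k}(\hat{\alpha}_{\varepsilon,n}^{(v-1)})$ differs from $Q_{v,k}(\alpha_0)$ by an amount negligible at the relevant scale; this reduces $V_{\varepsilon,n,v}^{(v)}(\alpha\,|\,\hat{\alpha}_{\varepsilon,n}^{(v-1)})$ to $U_{\varepsilon,n,v}^{(1)}(\alpha)$ plus a stochastic remainder uniform in $\alpha\in\Theta_\alpha$.

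Next I would carry out the two-term Taylor expansion of each contrast about the unrestricted and the restricted minimizers. For $\hat{\Lambda}_{n}^{(1)}$, write $V_{\varepsilon,n,v}^{(v)}(\hat{\alpha}_{\varepsilon,n}^{(1),H_0}\,|\,\hat{\alpha}_{\varepsilon,n}^{(v-1)})-V_{\varepsilon,n,v}^{(v)}(\hat{\alpha}_{\varepsilon,n}^{(v)}\,|\,\hat{\alpha}_{\varepsilon,n}^{(v-1)})$ and expand each around $\alpha_0$; the first derivative term is $\varepsilon^{-1}$ times an asymptotically $N_p(0,K_b(\theta_0))$ vector (this is the same CLT appearing in Lemma~\ref{lem:2}, obtained from the martingale part of $P_{1,k}$), and the second derivative, after multiplying by $\varepsilon^2$, converges in probability to $J_b(\alpha_0)$. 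A standard quadratic-approximation argument for the difference of minimized quadratic forms over the full space $\Theta_\alpha$ versus the restricted space $\Theta_\alpha^{H_0}$ (which, under $H_0^{(1)}$, is the subspace $\alpha_1=\cdots=\alpha_r=0$) then yields $\hat{\Lambda}_{n}^{(1)}\overset{d}{\to}Z^\top(J_b^{-1}(\alpha_0)-G_1^r(\alpha_0))Z$ with $Z\sim N_p(0,K_b(\theta_0))$, i.e.\ the law $\pi_r$. For $\hat{\Lambda}_{n}^{(2)}$, the contrast $V_{\varepsilon,n,v}^{(v+1)}(\beta\,|\,\hat{\alpha}_{\varepsilon,n}^{(v)})$ is literally $U_{\varepsilon,n,v}^{(2)}(\beta\,|\,\hat{\alpha}_{\varepsilon,n}^{(v)})$, so once one checks $\varepsilon^{-1}(\hat{\alpha}_{\varepsilon,n}^{(v)}-\alpha_0)=O_P(1)$ suffices to make the $\alpha$-plug-in asymptotically irrelevant for the $\beta$-inference (the drift correction enters only through an $O_P(\varepsilon\cdot h_n^{-1/2}\cdot n^{-1/2})$-type perturbation, controlled by [\textbf{B}] and the choice $v=\lceil\rho+\frac12\rceil$), the same expansion gives a first derivative that is $\sqrt{n}$ times an asymptotically $N_q(0,4I_\sigma(\beta_0))$ vector and a normalized second derivative converging to $2I_\sigma(\beta_0)$; the difference of quadratic forms over $\Theta_\beta$ versus $\Theta_\beta^{H_0}$ collapses, by Wilks' phenomenon, to a $\chi^2_s$ limit.

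The consistency claims under $H_1^{(1)},H_1^{(2)}$ follow from [\textbf{C}]: the restricted estimators $\hat{\alpha}_{\varepsilon,n}^{(1),H_0}$ and $\hat{\beta}_{\varepsilon,n}^{H_0}$ converge in probability to $\alpha_0^{H_0}\neq\alpha_0$ and $\beta_0^{H_0}\neq\beta_0$ respectively, whence $\hat{\Lambda}_{n}^{(1)}$ and $\hat{\Lambda}_{n}^{(2)}$ diverge at rate $\varepsilon^{-2}$ and $n$, so they eventually exceed any fixed quantile $\pi_r(\delta)$ or $\chi^2_s(\delta)$; this is the same argument as in Theorem~\ref{thm:4-small}, using the uniform convergence of $\varepsilon^2 U_{\varepsilon,n,v}^{(1)}$ to $U_1(\cdot,\alpha_0)$ and of $n^{-1}U_{\varepsilon,n,v}^{(2)}$ to $U_2(\cdot,\beta_0)$ on the restricted spaces.

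The main obstacle, and the step deserving the most care, is controlling the effect of the \emph{iterated} plug-in in the Type~II scheme: unlike Type~I, where a single preliminary estimator $\tilde{\alpha}_{\varepsilon,n}^{(1)}$ is used throughout, here one must track how the errors of $\hat{\alpha}_{\varepsilon,n}^{(1)},\dots,\hat{\alpha}_{\varepsilon,n}^{(v-1)}$ propagate through the successive correction terms $Q_{l,k}$, verifying that at each stage the substituted estimator is accurate enough ($\varepsilon^{-1}$-rate after the relevant steps) that the higher-order discretization terms it controls are of smaller order than $\varepsilon^2 h_n$, which is precisely where the balance condition [\textbf{B}] and the definition $v=\lceil\rho+\frac12\rceil$ are used. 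Once that bookkeeping is done, the limit laws are identical to the Type~I case and the rest is the routine quadratic-form/Wilks computation. For this reason we only indicate the argument and refer to the proof of Theorem~\ref{thm:3-small}; we omit the detailed proof.
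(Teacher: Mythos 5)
Your proposal is correct and follows essentially the same route the paper intends: the paper omits this proof entirely, stating only that it is analogous to Theorems \ref{thm:3-small} and \ref{thm:4-small}, and your sketch supplies exactly the ingredients that analogy requires (Lemma \ref{lem:2} for the $\varepsilon^{-1}$-rate of the plugged-in $\hat{\alpha}_{\varepsilon,n}^{(v)}$, the identity $V_{\varepsilon,n,v}^{(v+1)}=U_{\varepsilon,n,v}^{(2)}$, the negligibility of the $Q_{v,k}$ plug-in error, and the quadratic-form expansion of Lemma \ref{lem:3-small}). Note only that your paragraph on consistency under the alternatives addresses Theorem \ref{thm:6-small} rather than the statement at hand, which concerns only the null distributions.
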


\begin{theorem}\label{thm:6-small}
Assume {\bf{[A1]}}-{\bf{[A4]}}, {\bf{[B]}} and {\bf{[C]}}. 
Then it follows 
that for any $\delta\in(0,1),$
\begin{align*}
P(\hat{\Lambda}_{n}^{(1)}\ge\pi_r(\delta))\to1 \quad(\mathrm{under\  } H_1^{(1)}),\quad
P(\hat{\Lambda}_{n}^{(2)}\ge\chi^2_s(\delta))\to1 \quad(\mathrm{under\  } H_1^{(2)})
\end{align*}
as $\varepsilon\to0$ and $n\to\infty$.
\end{theorem}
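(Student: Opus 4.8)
The plan is to mirror the arguments behind Theorems \ref{thm:3-small} and \ref{thm:4-small}, replacing the Type I contrast functions by their Type II analogues; the key observation is that $V_{\varepsilon,n,v}^{(v)}(\cdot\mid\hat\alpha_{\varepsilon,n}^{(v-1)})$ and $V_{\varepsilon,n,v}^{(v+1)}(\cdot\mid\hat\alpha_{\varepsilon,n}^{(v)})$ have, up to asymptotically negligible remainders, the same leading quadratic and first-order structure as $U_{\varepsilon,n,v}^{(1)}$ and $U_{\varepsilon,n,v}^{(2)}$. First I would establish the consistency of the restricted estimators: for the drift part, use Lemma \ref{lem:2} (which gives $\varepsilon^{-1}(\hat\alpha_{\varepsilon,n}^{(v)}-\alpha_0)=O_P(1)$ and hence $\hat\alpha_{\varepsilon,n}^{(v-1)}\overset{P}{\to}\alpha_0$) together with the identifiability condition [C]-(i) to show $\hat\alpha_{\varepsilon,n}^{(1),H_0}\overset{P}{\to}\alpha_0^{H_0}$; symmetrically, Theorem \ref{thm:2} and [C]-(ii) give $\hat\beta_{\varepsilon,n}^{H_0}\overset{P}{\to}\beta_0^{H_0}$. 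The point of condition [C] is precisely that under the alternative these limits differ from the true values, i.e. $\alpha_0^{H_0}\neq\alpha_0$ and $\beta_0^{H_0}\neq\beta_0$.

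Next I would treat $\hat\Lambda_n^{(1)}$ under $H_1^{(1)}$. Write $P_{1,k}(\alpha)-Q_{v,k}(\hat\alpha_{\varepsilon,n}^{(v-1)}) = \bigl(b(X_{t_{k-1}^n},\alpha_0)-b(X_{t_{k-1}^n},\alpha)\bigr)h_n + (\text{martingale part of order }\varepsilon) + (\text{higher-order terms})$, so that after dividing by $n$ one gets
\begin{align*}
\frac{h_n}{n}V_{\varepsilon,n,v}^{(v)}(\alpha\mid\hat\alpha_{\varepsilon,n}^{(v-1)}) \overset{P}{\to} \varepsilon^{-2}h_n^2\cdot\frac{1}{T}\,U_1(\alpha,\alpha_0)\quad(\text{suitably normalized}),
\end{align*}
uniformly in $\alpha$ over the relevant compact sets. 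Since $\varepsilon^{-2}\to\infty$ and $U_1(\alpha_0^{H_0},\alpha_0)>U_1(\alpha_0,\alpha_0)=0$ by [C]-(i) and $\alpha_0^{H_0}\neq\alpha_0$, the difference $\hat\Lambda_n^{(1)} = V_{\varepsilon,n,v}^{(v)}(\hat\alpha_{\varepsilon,n}^{(1),H_0}\mid\cdot)-V_{\varepsilon,n,v}^{(v)}(\hat\alpha_{\varepsilon,n}^{(v)}\mid\cdot)$ diverges to $+\infty$ in probability at rate $\varepsilon^{-2}$; in particular $P(\hat\Lambda_n^{(1)}\ge\pi_r(\delta))\to1$ for any fixed quantile $\pi_r(\delta)$. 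For $\hat\Lambda_n^{(2)}$ under $H_1^{(2)}$, the analogous step uses that $\frac1n V_{\varepsilon,n,v}^{(v+1)}(\beta\mid\hat\alpha_{\varepsilon,n}^{(v)}) \overset{P}{\to} \int_0^1\bigl\{\log\det[\sigma\sigma^\top](X_s^0,\beta) + \mathrm{tr}([\sigma\sigma^\top]^{-1}(X_s^0,\beta)[\sigma\sigma^\top](X_s^0,\beta_0))\bigr\}ds$ uniformly in $\beta$ (here the $\varepsilon^{-2}h_n^{-1}|P_{v,k}|^2$ term contributes its mean), whose minimizer over $\Theta_\beta^{H_0}$ is $\beta_0^{H_0}\neq\beta_0$; the gap $U_2(\beta_0^{H_0},\beta_0)>0$ then forces $\hat\Lambda_n^{(2)}\overset{P}{\to}+\infty$ (at rate $n$), giving consistency.

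The main obstacle is the uniform-in-parameter control of the stochastic expansion of $P_{1,k}(\alpha)-Q_{v,k}(\hat\alpha_{\varepsilon,n}^{(v-1)})$ when the plug-in estimator $\hat\alpha_{\varepsilon,n}^{(v-1)}$ is only $\sqrt{n}$- (not $\varepsilon^{-1}$-) close to $\alpha_0$ along subsequences where $\rho$ is large, since the correction term $Q_{v,k}$ depends on this preliminary estimator; one must verify that the error incurred by replacing $\hat\alpha_{\varepsilon,n}^{(v-1)}$ by $\alpha_0$ inside $Q_{v,k}$ is $o_P$ of the divergence rate $\varepsilon^{-2}$ of $\hat\Lambda_n^{(1)}$. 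This is handled exactly as in the proof of Theorem \ref{thm:2} (the $Q_{v,k}$ terms carry extra powers of $h_n$, so their contribution is negligible after the $\varepsilon^{-2}h_n^{-1}$ normalization), so in fact the argument reduces, term by term, to the estimates already assembled for Theorems \ref{thm:2} and \ref{thm:4-small}; for this reason the proof is only sketched and the routine bookkeeping omitted, as indicated before the statement.
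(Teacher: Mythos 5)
Your proposal follows essentially the same route as the paper's (omitted) proof, namely the Type II analogue of Lemma \ref{lem:4-small} and Theorem \ref{thm:4-small}: consistency of the restricted estimators to $\alpha_0^{H_0}$ and $\beta_0^{H_0}$ via condition [{\bf C}], followed by $\varepsilon^{2}\hat{\Lambda}_{n}^{(1)}\overset{P}{\to}U_1(\alpha_0^{H_0},\alpha_0)>0$ and $n^{-1}\hat{\Lambda}_{n}^{(2)}\overset{P}{\to}U_2(\beta_0^{H_0},\beta_0)>0$, with the $Q_{v,k}$ plug-in terms absorbed as $R_d(n^{-2},\cdot)$ exactly as in the proof of Lemma \ref{lem:2}. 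The only blemish is your intermediate display, which as written has an $n$-dependent right-hand side; it should be stated as the uniform limit of the differenced, $\varepsilon^{2}$-normalized contrast, $\varepsilon^{2}\bigl(V_{\varepsilon,n,v}^{(v)}(\alpha|\hat{\alpha}_{\varepsilon,n}^{(v-1)})-V_{\varepsilon,n,v}^{(v)}(\alpha_0|\hat{\alpha}_{\varepsilon,n}^{(v-1)})\bigr)\overset{P}{\to}U_1(\alpha,\alpha_0)$, which is what your subsequent argument actually uses.
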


\subsection{Remarks for adaptive tests}\label{sec:supp-test}
\

We introduce the adaptive test statistics
for $\rho<\frac{1}{2}$. Using adaptive estimators proposed in Section \ref{sec:supp} $(\check{\alpha}_{\varepsilon,n},\check{\beta}_{\varepsilon,n})$, we define restricted estimators $(\check{\alpha}_{\varepsilon,n}^{H_0},\check{\beta}_{\varepsilon,n}^{H_0})$ as
\begin{align*}
\check{\alpha}_{\varepsilon,n}^{H_0}:=\argmin_{\alpha\in\Theta_{\alpha}^{H_0}}W_{\varepsilon,n}^{(2)}(\alpha|\check{\beta}_{\varepsilon,n}),\quad
\check{\beta}_{\varepsilon,n}^{H_0}:=\argmin_{\beta\in\Theta_{\beta}^{H_0}}W_{\varepsilon,n}^{(1)}(\beta),
\end{align*}
and likelihood ratio type test statistics $\check{\Lambda}_n^{(\alpha)}$ and $\check{\Lambda}_n^{(\beta)}$ are defined as
\begin{align*}
\check{\Lambda}_{n}^{(\alpha)}&:=W_{\varepsilon,n}^{(2)}\left(\check{\alpha}_{\varepsilon,n}^{H_0}|\check{\beta}_{\varepsilon,n}\right)-W_{\varepsilon,n}^{(2)}\left(\check{\alpha}_{\varepsilon,n}|\check{\beta}_{\varepsilon,n}\right),\\
\check{\Lambda}_{n}^{(\beta)}&:=W_{\varepsilon,n}^{(1)}\left(\check{\beta}_{\varepsilon,n}^{H_0}\right)-W_{\varepsilon,n}^{(1)}\left(\check{\beta}_{\varepsilon,n}\right).
\end{align*}
In this case, we first test 
the diffusion parameter $\beta$. 
Asymptotic distributions of these test statistics under null hypothesis and consistency are 
shown. Since $\check{\alpha}_{\varepsilon,n}$ is asymptotically efficient, 
the test statistic $\check{\Lambda}_{n}^{(\alpha)}$ 
converges to the chi-squared distribution under null hypothesis.
\begin{proposition}\label{prop:2-small}
Assume {\bf{[A1]}}-{\bf{[A4]}} and {\bf{[B]}} with $\rho<\frac{1}{2}$. 
Then it follows 
that
\begin{align*}
\hat{\Lambda}_{n}^{(\alpha)}\overset{d}{\to}\chi^2_r \quad(\mathrm{under\  } H_0^{(1)}),\quad
\hat{\Lambda}_{n}^{(\beta)}\overset{d}{\to}\chi^2_s \quad(\mathrm{under\  } H_0^{(2)})
\end{align*}
as $\varepsilon\to0$ and $n\to\infty$.
\end{proposition}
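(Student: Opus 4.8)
The plan is to reduce both limits to the Wilks-type argument already used for Theorems \ref{thm:3-small} and \ref{thm:5-small}; the only new point is that here both $\check\alpha_{\varepsilon,n}$ and $\check\beta_{\varepsilon,n}$ minimise contrasts carrying the weight $[\sigma\sigma^\top]^{-1}$, so the limiting score covariance is a positive multiple of the limiting Hessian and the limit laws are genuine chi-squares rather than the weighted form $\pi_r$ of Theorem \ref{thm:3-small}. I would first record the preliminaries: by Proposition \ref{prop:1-small} (valid since $\rho<\tfrac12$), $\check\alpha_{\varepsilon,n}\overset{P}{\to}\alpha_0$, $\check\beta_{\varepsilon,n}\overset{P}{\to}\beta_0$, $\varepsilon^{-1}(\check\alpha_{\varepsilon,n}-\alpha_0)=O_P(1)$, $\sqrt{n}(\check\beta_{\varepsilon,n}-\beta_0)=O_P(1)$ with the efficient Gaussian limits. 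Since under $H_0^{(1)}$ the true value $\alpha_0$ already lies in $\Theta_\alpha^{H_0}$, the definition of $\alpha_0^{H_0}$ with {\bf{[A3]}} forces $\alpha_0^{H_0}=\alpha_0$, and similarly $\beta_0^{H_0}=\beta_0$ under $H_0^{(2)}$; running the argmin arguments of Proposition \ref{prop:1-small} over $\Theta_\alpha^{H_0}$, $\Theta_\beta^{H_0}$ then gives $\check\alpha_{\varepsilon,n}^{H_0}\overset{P}{\to}\alpha_0$, $\check\beta_{\varepsilon,n}^{H_0}\overset{P}{\to}\beta_0$ with the same rates.

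For $\check\Lambda_n^{(\beta)}$ under $H_0^{(2)}$, I would establish the locally asymptotically quadratic expansion of $\beta\mapsto W_{\varepsilon,n}^{(1)}(\beta)$ about $\beta_0$ at the $\sqrt n$-scale: with $u=\sqrt{n}(\beta-\beta_0)$, controlling the Taylor remainder uniformly on compact $u$-sets by the estimates of Section \ref{sec6} (and using that for $\rho<\tfrac12$ the drift part of the raw increment enters this contrast only below the order of its Gaussian fluctuation),
\begin{align*}
W_{\varepsilon,n}^{(1)}(\beta_0+u/\sqrt{n})-W_{\varepsilon,n}^{(1)}(\beta_0)=u^\top\xi_n+\tfrac12 u^\top\Gamma_n u+o_P(1),
\end{align*}
with $\Gamma_n\overset{P}{\to}\Gamma$ nonrandom positive definite (a positive multiple of $I_\sigma(\beta_0)$) and $\xi_n\overset{d}{\to}N_q(0,c\Gamma)$, $c>0$ the constant from the $-2\log$-likelihood normalisation of $W_{\varepsilon,n}^{(1)}$ --- this proportionality of the limiting score covariance to the limiting Hessian being precisely where the weight $[\sigma\sigma^\top]^{-1}$ enters. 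Minimising the right-hand side over $u\in\mathbb{R}^q$ and over the null subspace $\{u:u_1=\cdots=u_s=0\}$ and subtracting (using the standard argmin/tightness bounds) gives
\begin{align*}
\check\Lambda_n^{(\beta)}=\tfrac12\left(\xi_n^\top\Gamma_n^{-1}\xi_n-\xi_{n,(2)}^\top\big(\Gamma_{n,(22)}\big)^{-1}\xi_{n,(2)}\right)+o_P(1),
\end{align*}
$\xi_{n,(2)},\Gamma_{n,(22)}$ being the blocks of the last $q-s$ coordinates; since $\xi_n\overset{d}{\to}N_q(0,c\Gamma)$, the continuous mapping theorem together with the Schur-complement identity (for $\eta\sim N_q(0,A)$, $\eta^\top A^{-1}\eta-\eta_{(2)}^\top A_{(22)}^{-1}\eta_{(2)}\sim\chi^2_s$) gives $\check\Lambda_n^{(\beta)}\overset{d}{\to}\chi^2_s$, the constant $c$ cancelling against the factor $\tfrac12$.

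For $\check\Lambda_n^{(\alpha)}$ under $H_0^{(1)}$ the argument is the same with $\varepsilon^{-1}$ replacing $\sqrt{n}$ and $W_{\varepsilon,n}^{(2)}(\cdot|\check\beta_{\varepsilon,n})$ replacing $W_{\varepsilon,n}^{(1)}$. One first checks that replacing $\check\beta_{\varepsilon,n}$ by $\beta_0$ in this contrast perturbs the expansion only by $o_P(1)$: the mixed derivative $\varepsilon\,\partial_\beta\partial_\alpha W_{\varepsilon,n}^{(2)}(\alpha_0|\cdot)$ is $O_P(1)$ and $\check\beta_{\varepsilon,n}-\beta_0=O_P(n^{-1/2})\to0$. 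One then obtains, with $w=\varepsilon^{-1}(\alpha-\alpha_0)$,
\begin{align*}
W_{\varepsilon,n}^{(2)}(\alpha_0+\varepsilon w|\check\beta_{\varepsilon,n})-W_{\varepsilon,n}^{(2)}(\alpha_0|\check\beta_{\varepsilon,n})=w^\top\zeta_n+\tfrac12 w^\top\Xi_n w+o_P(1),
\end{align*}
with $\Xi_n\overset{P}{\to}\Xi$ a positive multiple of $I_b(\theta_0)$ and $\zeta_n\overset{d}{\to}N_p(0,c'\Xi)$ --- again the weight $[\sigma\sigma^\top]^{-1}$ in $W_{\varepsilon,n}^{(2)}$ making the limiting score covariance proportional to the limiting Hessian, which is exactly what fails for the unweighted $U_{\varepsilon,n,v}^{(1)}$ of Theorem \ref{thm:3-small} (there the limiting score covariance is a multiple of $K_b(\theta_0)$, not proportional to the Hessian $\propto J_b(\alpha_0)$, whence the weighted limit $\pi_r$). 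Repeating the minimisation/subtraction over $\{w:w_1=\cdots=w_r=0\}$ and the Schur-complement identity yields $\check\Lambda_n^{(\alpha)}\overset{d}{\to}\chi^2_r$.

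The main obstacle will be the bookkeeping behind the two expansions: the uniform-on-compacts quadratic approximations of $W_{\varepsilon,n}^{(1)}$, $W_{\varepsilon,n}^{(2)}$, the tightness estimates legitimising the replacement of $\check\alpha_{\varepsilon,n},\check\alpha_{\varepsilon,n}^{H_0}$ (resp. $\check\beta_{\varepsilon,n},\check\beta_{\varepsilon,n}^{H_0}$) by the exact minimisers of the quadratic approximations, and --- for $W_{\varepsilon,n}^{(1)}$, built from the raw increments $X_{t_k^n}-X_{t_{k-1}^n}$ --- the control of the drift contamination for $\rho<\tfrac12$. These are all of the same nature as the estimates carried out in Section \ref{sec6} for Theorems \ref{thm:1}--\ref{thm:2} and Proposition \ref{prop:1-small}; the only genuinely new ingredient is the verification of the score-versus-Hessian proportionality for the weighted contrasts $W_{\varepsilon,n}^{(1)}$ and $W_{\varepsilon,n}^{(2)}$, obtained by computing the leading stochastic parts of $\xi_n$, $\zeta_n$ and comparing with the limits of $\Gamma_n$, $\Xi_n$.
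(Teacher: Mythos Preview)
Your proposal is correct, and since the paper omits the proof of this proposition entirely (referring back to Theorems \ref{thm:3-small}--\ref{thm:4-small} and Proposition \ref{prop:1-small}), there is no detailed paper proof to compare against. Your identification of the key structural point is exactly the one the paper signals in the text preceding the proposition: because $W_{\varepsilon,n}^{(2)}$ carries the weight $[\sigma\sigma^\top]^{-1}$, the limiting score covariance is proportional to the limiting Hessian ($I_b(\theta_0)$ rather than the pair $J_b,K_b$), so the Wilks statistic collapses to a genuine $\chi^2_r$ instead of $\pi_r$.

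The technical packaging differs slightly from what the paper would do. The paper's template (Lemma \ref{lem:3-small} $\Rightarrow$ Theorem \ref{thm:3-small}) is: Taylor-expand the score at $\alpha_0$, use the block identity $G_1^r J_b\,\varepsilon^{-1}(\check\alpha^{H_0}-\alpha_0)=\varepsilon^{-1}(\check\alpha^{H_0}-\alpha_0)$ to derive the limit of $\varepsilon^{-1}(\check\alpha-\check\alpha^{H_0})$ explicitly, then substitute into a second-order Taylor expansion of the contrast. You instead write a single locally-asymptotically-quadratic expansion of the contrast, minimise it over $\mathbb{R}^p$ and over the null subspace, and invoke the Schur-complement identity. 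The two routes are equivalent; yours is more compact and makes the role of score--Hessian proportionality transparent, while the paper's is more computational but reuses its existing lemmas verbatim. Your handling of the plug-in $\check\beta_{\varepsilon,n}\rightsquigarrow\beta_0$ via the mixed derivative (which is indeed $O_P(1)$ at the $\varepsilon$-scale, cf.\ the proof of \eqref{thm1:3}) and of the drift contamination in $W_{\varepsilon,n}^{(1)}$ under $\rho<\tfrac12$ are both correct and match what the paper's omitted argument would need.
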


\begin{proposition}\label{prop:3-small}
Assume {\bf{[A1]}}-{\bf{[A4]}}, {\bf{[B]}} with $\rho<\frac{1}{2}$ and {\bf{[C]}}. 
Then it follows 
that for any $\delta\in(0,1),$
\begin{align*}
P(\hat{\Lambda}_{n}^{(\alpha)}\ge\chi^2_r(\delta))\to1 \quad(\mathrm{under\  } H_1^{(1)}),\quad
P(\hat{\Lambda}_{n}^{(\beta)}\ge\chi^2_s(\delta))\to1 \quad(\mathrm{under\  } H_1^{(2)})
\end{align*}
as $\varepsilon\to0$ and $n\to\infty$.
\end{proposition}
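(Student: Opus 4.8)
Proposition \ref{prop:3-small} is proved exactly along the lines of the consistency statements for Theorems \ref{thm:4-small} and \ref{thm:6-small}. The plan is to exploit that each likelihood ratio type statistic is the difference of a contrast evaluated at an unconstrained minimizer and at a minimizer constrained to the hypothesis set; under the relevant alternative the two minimizers have distinct probability limits, and this gap forces the statistic to diverge at the rate of the underlying estimator ($n$ for the diffusion part, $\varepsilon^{-2}$ for the drift part), so it eventually exceeds the fixed quantile $\chi^2_s(\delta)$, resp. $\chi^2_r(\delta)$. I would treat the two cases separately, starting with $\check\Lambda_n^{(\beta)}$, which is the leading step when $\rho<\tfrac12$. (The statistics written $\hat\Lambda_n^{(\alpha)},\hat\Lambda_n^{(\beta)}$ in the statement are the $\check\Lambda_n^{(\alpha)},\check\Lambda_n^{(\beta)}$ defined just above it.)

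\emph{The $\beta$-test.} Here everything is normalized by $n$. The key step is the uniform-in-$\beta$ expansion
\begin{align*}
\frac1n W_{\varepsilon,n}^{(1)}(\beta)=\frac1T\int_0^T\Big\{\log\det[\sigma\sigma^{\top}](X_s^0,\beta)+\mathrm{tr}\big([\sigma\sigma^{\top}]^{-1}(X_s^0,\beta)[\sigma\sigma^{\top}](X_s^0,\beta_0)\big)\Big\}\,ds+o_P(1),
\end{align*}
whose right-hand side differs from $U_2(\cdot,\beta_0)/T$ only by a $\beta$-free constant. The hypothesis $\rho<\tfrac12$ enters precisely here: since $W_{\varepsilon,n}^{(1)}$ is built from the raw increment $X_{t_k^n}-X_{t_{k-1}^n}$ with no drift correction, the drift contributes a bias of order $\varepsilon^{-2}h_n=\varepsilon^{-2}T/n$ to $n^{-1}W_{\varepsilon,n}^{(1)}$, which vanishes iff $\rho<\tfrac12$; the mixed drift–noise term is $O_P((\varepsilon n)^{-1})$ and also vanishes; and the pure noise part converges by a Riemann-sum / law of large numbers argument together with $X^{\varepsilon}\to X^{0}$ (uniformity in $\beta$ following from continuity and compactness of $\Theta_\beta$). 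Passing to infima gives $n^{-1}W_{\varepsilon,n}^{(1)}(\check\beta_{\varepsilon,n})\to\inf_{\Theta_\beta}(\cdot)$ and $n^{-1}W_{\varepsilon,n}^{(1)}(\check\beta_{\varepsilon,n}^{H_0})\to\inf_{\Theta_\beta^{H_0}}(\cdot)$, hence
\begin{align*}
\frac1n\,\check\Lambda_n^{(\beta)}\overset{P}{\to}\frac1T\Big(U_2(\beta_0^{H_0},\beta_0)-U_2(\beta_0,\beta_0)\Big)=\frac1T\,U_2(\beta_0^{H_0},\beta_0).
\end{align*}
Under $H_1^{(2)}$ we have $\beta_0\notin\Theta_\beta^{H_0}$, so $\beta_0^{H_0}\ne\beta_0$, and then $U_2(\beta_0^{H_0},\beta_0)>0$ because $U_2(\cdot,\beta_0)$ vanishes only at $\beta_0$ by \textbf{[A3]}(b) and the invertibility in \textbf{[A2]}/\textbf{[A4]}(ii) (\textbf{[C]}(ii) pins $\beta_0^{H_0}$ down as the well-separated minimizer). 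Hence $\check\Lambda_n^{(\beta)}\overset{P}{\to}+\infty$ and $P(\check\Lambda_n^{(\beta)}\ge\chi^2_s(\delta))\to1$.

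\emph{The $\alpha$-test.} Here the relevant scale is $\varepsilon^{2}$. Writing $P_{1,k}(\alpha)=P_{1,k}(\alpha_0)-h_n\big(b(X_{t_{k-1}^n},\alpha)-b(X_{t_{k-1}^n},\alpha_0)\big)$, expanding the quadratic form, and using $P_{1,k}(\alpha_0)=\varepsilon\int_{t_{k-1}^n}^{t_k^n}\sigma(X_s,\beta_0)\,dW_s+O_P(h_n^2)$ together with $\check\beta_{\varepsilon,n}\overset{P}{\to}\beta_0$, I would prove the uniform-in-$\alpha$ limit
\begin{align*}
\varepsilon^{2}\Big(W_{\varepsilon,n}^{(2)}(\alpha|\check\beta_{\varepsilon,n})-W_{\varepsilon,n}^{(2)}(\alpha_0|\check\beta_{\varepsilon,n})\Big)\overset{P}{\to}\bar U_1(\alpha):=\int_0^T\big(b(X_s^0,\alpha)-b(X_s^0,\alpha_0)\big)^{\top}[\sigma\sigma^{\top}]^{-1}(X_s^0,\beta_0)\big(b(X_s^0,\alpha)-b(X_s^0,\alpha_0)\big)\,ds,
\end{align*}
the cross (martingale) term being $O_P(\varepsilon)$ and the discretization remainder $O_P(n^{-1})$. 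Since $\bar U_1\ge0$ and $\bar U_1(\alpha_0)=0$, taking infima yields $\varepsilon^{2}\big(W_{\varepsilon,n}^{(2)}(\check\alpha_{\varepsilon,n}|\check\beta_{\varepsilon,n})-W_{\varepsilon,n}^{(2)}(\alpha_0|\check\beta_{\varepsilon,n})\big)=o_P(1)$ and $\varepsilon^{2}\big(W_{\varepsilon,n}^{(2)}(\check\alpha_{\varepsilon,n}^{H_0}|\check\beta_{\varepsilon,n})-W_{\varepsilon,n}^{(2)}(\alpha_0|\check\beta_{\varepsilon,n})\big)\overset{P}{\to}c^{*}:=\inf_{\alpha\in\Theta_\alpha^{H_0}}\bar U_1(\alpha)$. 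Under $H_1^{(1)}$, $\alpha_0\notin\Theta_\alpha^{H_0}$, so by \textbf{[A3]}(a) and the invertibility of $[\sigma\sigma^{\top}]$ the continuous function $\bar U_1$ is strictly positive on the compact set $\Theta_\alpha^{H_0}$, whence $c^{*}>0$ (\textbf{[C]}(i) identifies $\alpha_0^{H_0}$ as an attaining point). Subtracting, $\varepsilon^{2}\check\Lambda_n^{(\alpha)}\overset{P}{\to}c^{*}>0$, which with $\varepsilon\to0$ gives $\check\Lambda_n^{(\alpha)}\overset{P}{\to}+\infty$ and $P(\check\Lambda_n^{(\alpha)}\ge\chi^2_r(\delta))\to1$. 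A sharper second-order Taylor expansion of $W_{\varepsilon,n}^{(2)}(\cdot|\check\beta_{\varepsilon,n})$ at $\alpha_0$, using $\varepsilon^{-1}(\check\alpha_{\varepsilon,n}-\alpha_0)=O_P(1)$ from Proposition \ref{prop:1-small} (its gradient is $O_P(\varepsilon^{-1})$ and its Hessian $2\varepsilon^{-2}I_b(\theta_0)(1+o_P(1))$), would even show the unconstrained piece is $O_P(1)$, matching the null case; this refinement is not needed for consistency.

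The main obstacle is the bookkeeping in these two uniform expansions under \textbf{[B]} with $\rho<\tfrac12$: one must track every stochastic and discretization remainder against the $\varepsilon^{-2}h_n^{-1}$ prefactor and the scaling $P_{1,k}(\alpha_0)=\varepsilon\int_{t_{k-1}^n}^{t_k^n}\sigma\,dW+O_P(h_n^2)$, and in particular verify that the drift bias in $W_{\varepsilon,n}^{(1)}$ is negligible exactly when $\rho<\tfrac12$. A point to keep in mind is that in each statistic both the constrained and the unconstrained contrast value are themselves of order $n$; under $H_0$ their leading terms cancel to give the $\chi^2$ limits of Proposition \ref{prop:2-small}, whereas under the alternative the residual gap is of order $n$ for $\check\Lambda_n^{(\beta)}$ and of order $\varepsilon^{-2}$ (which tends to infinity, although it is $o(n)$ when $\rho<\tfrac12$) for $\check\Lambda_n^{(\alpha)}$ — either way divergent. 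These estimates are of the same nature as those already developed for Propositions \ref{prop:1-small} and \ref{prop:2-small}; once the uniform convergences are in hand, the identification supplied by \textbf{[A3]} and \textbf{[C]} makes the conclusion immediate.
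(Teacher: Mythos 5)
Your proposal is correct and follows the same strategy the paper uses for the analogous Theorem \ref{thm:4-small} (whose proof the paper adapts here, the proof of Proposition \ref{prop:3-small} itself being omitted): normalize each statistic by the reciprocal of its divergence rate ($n^{-1}$ for the $\beta$-statistic, $\varepsilon^{2}$ for the $\alpha$-statistic), show the normalized difference converges in probability to a strictly positive constant ($U_2(\beta_0^{H_0},\beta_0)$, resp.\ a positive infimum of the weighted drift discrepancy), and conclude divergence past any fixed quantile. You also correctly spot the typo ($\hat\Lambda$ for $\check\Lambda$) and correctly locate where $\rho<\tfrac12$ is used, namely in killing the $\varepsilon^{-2}h_n$ drift bias in $n^{-1}W^{(1)}_{\varepsilon,n}$.

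One variation in your argument is worth recording. The paper's template (Lemma \ref{lem:4-small} plus Theorem \ref{thm:4-small}) first proves consistency of the \emph{restricted} estimator toward $\alpha_0^{H_0}=\argmin_{\Theta_\alpha^{H_0}}U_1$ using \textbf{[C]}, then evaluates the limiting contrast there. You instead pass directly to infima via the uniform convergence, obtaining $\varepsilon^{2}\check\Lambda_n^{(\alpha)}\overset{P}{\to}\inf_{\Theta_\alpha^{H_0}}U_3(\cdot,\theta_0)>0$ without identifying the limit of $\check\alpha_{\varepsilon,n}^{H_0}$. This is not merely cosmetic: the contrast $W^{(2)}_{\varepsilon,n}(\cdot|\check\beta_{\varepsilon,n})$ carries the weight $[\sigma\sigma^\top]^{-1}$, so its limit is $U_3$, not $U_1$, and the restricted minimizer of $U_3$ over $\Theta_\alpha^{H_0}$ need not coincide with the $\alpha_0^{H_0}$ fixed by \textbf{[C]}-(i). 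A literal transplant of the paper's Lemma \ref{lem:4-small} would require restating \textbf{[C]}-(i) for $U_3$; your infimum-based argument needs only compactness of $\Theta_\alpha^{H_0}$, continuity of $U_3$, and $\alpha_0\notin\Theta_\alpha^{H_0}$ under $H_1^{(1)}$, and so sidesteps the issue. The price is that you do not obtain the limit of the restricted estimator, but that is not needed for consistency of the test.
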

We omit the proofs of these propositions. 

In the testing method for $\alpha$ with the chi-squared distribution in general $\rho>0$, 
we can use the asymptotic efficient estimator $\tilde{\alpha}_{\varepsilon,n}$ 
in Type I or $\hat{\alpha}_{\varepsilon,n}$ in Type II. 
If the restrict estimators $\tilde{\alpha}_{\varepsilon,n}^{H_0}$ and $\hat{\alpha}_{\varepsilon,n}^{H_0}$ 
are defined as
\begin{align*}
\tilde{\alpha}_{\varepsilon,n}^{H_0}:=\argmin_{\alpha\in\Theta_{\alpha}^{H_0}}U_{\varepsilon,n,v}^{(3)}(\alpha|\tilde{\beta}_{\varepsilon,n}),\quad
\hat{\alpha}_{\varepsilon,n}^{H_0}:=\argmin_{\alpha\in\Theta_{\alpha}^{H_0}}V_{\varepsilon,n,v}^{(v+2)}(\alpha|\hat{\alpha}_{\varepsilon,n}^{(v)},\hat{\beta}_{\varepsilon,n}).
\end{align*}
and the likelihood ratio type test statistics $\tilde{\Lambda}_n^{(\alpha)}$ and $\hat{\Lambda}_n^{(\alpha)}$ 
are defined as
\begin{align*}
\tilde{\Lambda}_{n}^{(\alpha)}&:=U_{\varepsilon,n,v}^{(3)}\left(\tilde{\alpha}_{\varepsilon,n}^{H_0}|\tilde{\beta}_{\varepsilon,n}\right)-U_{\varepsilon,n,v}^{(3)}\left(\tilde{\alpha}_{\varepsilon,n}|\tilde{\beta}_{\varepsilon,n}\right),\\
\hat{\Lambda}_{n}^{(\alpha)}&:=V_{\varepsilon,n,v}^{(v+2)}\left(\hat{\alpha}_{\varepsilon,n}^{H_0}|\hat{\alpha}_{\varepsilon,n}^{(v)},\hat{\beta}_{\varepsilon,n}\right)-V_{\varepsilon,n,v}^{(v+2)}\left(\hat{\alpha}_{\varepsilon,n}|\hat{\alpha}_{\varepsilon,n}^{(v)},\hat{\beta}_{\varepsilon,n}\right),
\end{align*}
then $\tilde{\Lambda}_n^{(\alpha)}$ and $\hat{\Lambda}_n^{(\alpha)}$ converge to the chi-squared distribution 
with $r$ degree of freedom under null hypothesis.
In the case of $\alpha=\beta$ except for (c) in Section \ref{sec:supp}, we can also conduct the chi-squared test 
for drift or diffusion parameter using the estimators introduced in Section \ref{sec:supp}-(a) and (b).

\section{Examples and simulations}\label{sec5}
\subsection{Model 1 (Case of estimating both drift and diffusion parameters)}\label{sec:model1}
\ 

First, we examine the asymptotic performance of Theorems \ref{thm:1} and \ref{thm:2}.
Consider the following two-dimensional model:
\begin{equation}\label{model1}
\begin{cases}
dX_t = 
\begin{pmatrix}
-\alpha_1 X_{t,1}+2\cos(1+\alpha_2 X_{t,2})\\
2\sin(1+\alpha_3 X_{t,1})-\alpha_4 X_{t,2}
\end{pmatrix}
dt
+\varepsilon
\begin{pmatrix}
{\beta_1}(1+X_{t,1}^2)^{-1}&-0.1\\
0.1&{\beta_2}(1+X_{t,2}^2)^{-1}
\end{pmatrix}
dW_t,\quad t\in[0,1]\\
X_0 = \begin{pmatrix}
1\\
1
\end{pmatrix},
\end{cases}
\end{equation}
where $\theta=(\alpha_1,\alpha_2,\alpha_3,\alpha_4,\beta_1,\beta_2)$ are unknown parameters. The true parameter values are $\theta_0=(3,6,5,4,1,0.5)$, 
and the parameter space is assumed to be $\Theta=[0.01,50]^6$. 
We estimate these parameters by the joint estimation method in Gloter and S{\o}rensen \cite{Gloter_2009}, 
the Type I method, and the Type II method.  We choose the initial parameters $\theta=\theta_0$ or $\theta=(6,4,6,8,2,2)$ and treat the case of $(\varepsilon,n)=(0.05,100),(0.01,100),(0.01,1000)$. 
For the balance condition, we set $\rho=1$, that is, the approximation degree $v=2$. In the simulation, {\bf{optim()}} is used with the "L-BFGS-B" method in R Language, and 10000 independent sample paths are 
generated.

Tables \ref{model1:table1} and \ref{model1:table2} show the simulation results of parameter estimation with the two choice of the initial parameter values $\theta_{\mathrm{init}}=\theta_0=(3,6,5,4,1,0.5)$ or $\theta_{\mathrm{init}}=(6,4,6,8,2,1)$.
In Table \ref{model1:table1}, we see that all of the estimation methods have good performances 
and there is no notable difference among the three types of methods. In Table \ref{model1:table2}, however,  
the joint estimation method has  considerable biases 
while the Type I and Type II methods have good performances. 
This is because the joint estimation method needs a six-dimensional optimization 
while the Type I and Type II methods need at most four-dimensional optimization. 
Therefore, the adaptive estimation methods are useful for decreasing the dimensions of the parameter optimization. 
Figures \ref{model1:plot1-small} and \ref{model1:plot2-small}  
calculate $(\varepsilon^{-1}(\tilde{\alpha}_{\varepsilon,n}-\alpha_0),\sqrt{n}(\tilde{\beta}_{\varepsilon,n}-\beta_0))$ for 10000 times 
and create histogram, 
empirical distribution, and Q-Q plot for each parameter. 
These figures show that each parameter's estimator has asymptotic normality and its asymptotic variance equals $I(\theta_0)^{-1}$. 

Next, we consider the following adaptive tests: 
\begin{equation}\label{ada.stest-model1}
\begin{cases}
H_0^{(1)}:\ (\alpha_1,\alpha_4)=(3.0,4.0),\\
H_1^{(1)}:\ \text{not}\ H_0^{(1)},
\end{cases}
\quad
\begin{cases}
H_0^{(2)}:\ (\beta_1,\beta_2)=(1.0,0.5),\\
H_1^{(2)}:\ \text{not}\ H_0^{(2)}.
\end{cases}
\end{equation}
These tests derive the four kinds of results as follows:
\begin{enumerate}[{\bf{Case} 1.}]
\item  Neither $\alpha$ nor $\beta$ is rejected;
\item $\alpha$ is not rejected, but $\beta$ is rejected;
\item  $\alpha$ is rejected, but $\beta$ is not rejected;
\item  Both $\alpha$ and $\beta$ are rejected.
\end{enumerate}
We set true parameters $(\alpha_{2}^*,\alpha_{3}^*)=(6.0,5.0)$ and choose true parameters
 $=(\alpha_1^*,\alpha_4^*,\beta_1^*,\beta_2^*)$ from $\{(3.0,4.0,1.0,0.5),\ (3.0,4.0,1.1,0.6),\ (3.1,4.1,1.0,0.5),\ (3.1,4.1,1.1,0.6)\}$, 
which corresponds to the true parameters of Cases 1-4. 
In this adaptive test simulation, we consider the cases of $(\varepsilon,n)=(0.05,100),$ $(0.01,100),(0.01,1000)$ and treat only Type I method. 
The rest of the settings are the same as in the simulation of the estimation above. 
Let the significance level denote $\delta=0.05$ and each test is rejected when the realization of
test statistic $\tilde{\Lambda}_{n}^{(1)}$ or $\tilde{\Lambda}_{n}^{(2)}$ is greater than $\pi_2(0.05)$ or $\chi^2_2(0.05)$, respectively.
The simulation is repeated 10000 times.

Table \ref{model1_table-small} shows the number of counts of Cases 1-4 selected by the tests \eqref{ada.stest-model1}, 
where the true parameters $(\alpha^*_1,\alpha^*_4,\beta^*_1,$ $\beta^*_2)$ correspond to each case. 
In all cases, 
the adaptive tests can judge the true case most often as $\varepsilon$ decreases and $n$ increases. 
Table \ref{model1_esize-small} shows the empirical sizes and powers 
for each parametric test in \eqref{ada.stest-model1}. 
The bolded letters in the table indicate that the results are similar to the theoretical results, that is, 
the empirical sizes take around the significance level $\delta=0.05$ or 
the empirical powers take 1.0000. 
When the true case is Case 1, in particular, 
Figure \ref{model1:plot3-small} shows the histogram 
and the empirical distribution of the Type I test statistics. 
This Figure implies that each test statistic follows the asymptotic distribution stated 
in Theorem \ref{thm:3-small} under null hypothesis.

\begingroup
\begingroup
\renewcommand{\arraystretch}{1.2}
\begin{table}[tbp]
\begin{center}
\caption{\ Mean (S.D.) of the simulated values with the true initial parameters $\theta_{\mathrm{init}}=\theta_0$.}
\label{model1:table1}
\begin{tabular}{cccccccccc}
\hline
$\varepsilon$&$n$&&Method&$\alpha_1(3)$&$\alpha_2(6)$&$\alpha_3(5)$&$\alpha_4(4)$&$\beta_1(1)$&$\beta_2(0.5)$\\
\hline\hline
\multirow{6}{*}{0.05}&\multirow{6}{*}{100}&&\multirow{2}{*}{Joint}&3.0104&6.0071&5.0005&4.0003&0.9731&0.4863\\
&&&&(0.0862)&(0.0892)&(0.0360)&(0.0689)&(0.0713)&(0.0360)\\
&&&\multirow{2}{*}{Type I}&3.0104&6.0071&5.0005&4.0003&0.9755&0.4868\\
&&&&(0.0862)&(0.0892)&(0.0360)&(0.0689)&(0.0715)&(0.0360)\\
&&&\multirow{2}{*}{Type II}&3.0092&6.0080&5.0004&4.0007&0.9755&0.4868\\
&&&&(0.0862)&(0.0895)&(0.0361)&(0.0690)&(0.0715)&(0.0360)\\
\hline
\multirow{6}{*}{0.01}&\multirow{6}{*}{100}&&\multirow{2}{*}{Joint}&3.0110&6.0085&5.0008&3.9993&0.9777&0.4877\\
&&&&(0.0172)&(0.0178)&(0.0072)&(0.0138)&(0.0716)&(0.0360)\\
&&&\multirow{2}{*}{Type I}&3.0110&6.0085&5.0008&3.9993&0.9803&0.4881\\
&&&&(0.0172)&(0.0178)&(0.0072)&(0.0138)&(0.0720)&(0.0360)\\
&&&\multirow{2}{*}{Type II}&3.0099&6.0095&5.0006&3.9997&0.9846&0.4884\\
&&&&(0.0172)&(0.0179)&(0.0072)&(0.0138)&(0.0720)&(0.0360)\\\hline
\multirow{6}{*}{0.01}&\multirow{6}{*}{1000}&&\multirow{2}{*}{Joint}&3.0002&6.0015&5.0003&4.0008&0.9976&0.4986\\
&&&&(0.0171)&(0.0176)&(0.0072)&(0.0137)&(0.0226)&(0.0114)\\
&&&\multirow{2}{*}{Type I}&3.0002&6.0015&5.0003&4.0008&0.9978&0.4986\\
&&&&(0.0171)&(0.0176)&(0.0072)&(0.0137)&(0.0227)&(0.0114)\\
&&&\multirow{2}{*}{Type II}&3.0002&6.0015&5.0003&4.0009&0.9978&0.4986\\
&&&&(0.0171)&(0.0176)&(0.0072)&(0.0137)&(0.0227)&(0.0114)\\
\hline
\end{tabular}
\end{center}
\end{table}
\endgroup

\begingroup
\renewcommand{\arraystretch}{1.2}
\begin{table}[bp]
\begin{center}
\caption{\ Mean (S.D.) of the simulated values with the initial parameters $\theta_{\mathrm{init}}=(6,4,6,8,2,1)$.}
\label{model1:table2}
\begin{tabular}{cccccccccc}
\hline
$\varepsilon$&$n$&&Method&$\alpha_1(3)$&$\alpha_2(6)$&$\alpha_3(5)$&$\alpha_4(4)$&$\beta_1(1)$&$\beta_2(0.5)$\\
\hline\hline
\multirow{6}{*}{0.05}&\multirow{6}{*}{100}&&\multirow{2}{*}{Joint}&2.5696&48.172&42.142&3.8555&5.0627&3.8834\\
&&&&(0.1447)&(2.3443)&(2.7301)&(0.4780)&(0.2111)&(0.1693)\\
&&&\multirow{2}{*}{Type I}&3.0104&6.0071&5.0005&4.0003&0.9755&0.4868\\
&&&&(0.0862)&(0.0892)&(0.0360)&(0.0689)&(0.0715)&(0.0360)\\
&&&\multirow{2}{*}{Type II}&3.0070&6.1873&5.0003&4.0002&1.0008&0.4870\\
&&&&(0.0927)&(2.1529)&(0.0361)&(0.0694)&(0.3134)&(0.0361)\\
\hline
\multirow{6}{*}{0.01}&\multirow{6}{*}{100}&&\multirow{2}{*}{Joint}&2.4918&49.999&49.933&3.9127&24.812&20.631\\
&&&&(0.0148)&(0.0530)&(0.1038)&(0.0424)&(0.1539)&(0.1132)\\
&&&\multirow{2}{*}{Type I}&3.0110&6.0085&5.0008&3.9993&0.9803&0.4881\\
&&&&(0.0172)&(0.0178)&(0.0072)&(0.0138)&(0.0720)&(0.0360)\\
&&&\multirow{2}{*}{Type II}&3.0099&6.0095&5.0006&3.9997&0.9846&0.4884\\
&&&&(0.0172)&(0.0179)&(0.0072)&(0.0138)&(0.0720)&(0.0360)\\
\hline
\multirow{6}{*}{0.01}&\multirow{6}{*}{1000}&&\multirow{2}{*}{Joint}&2.5730&30.681&19.359&3.5535&6.6510&6.4561\\
&&&&(0.0275)&(1.9874)&(3.7729)&(0.5760)&(0.0907)&(0.5269)\\
&&&\multirow{2}{*}{Type I}&3.0002&6.0015&5.0003&4.0008&0.9978&0.4986\\
&&&&(0.0171)&(0.0176)&(0.0072)&(0.0137)&(0.0227)&(0.0114)\\
&&&\multirow{2}{*}{Type II}&3.0002&6.0015&5.0003&4.0009&0.9978&0.4986\\
&&&&(0.0171)&(0.0176)&(0.0072)&(0.0137)&(0.0227)&(0.0114)\\
\hline
\end{tabular}
\end{center}
\end{table}
\endgroup
\endgroup


\begin{figure}[htbp]
\centering
\includegraphics[width=0.61\columnwidth]{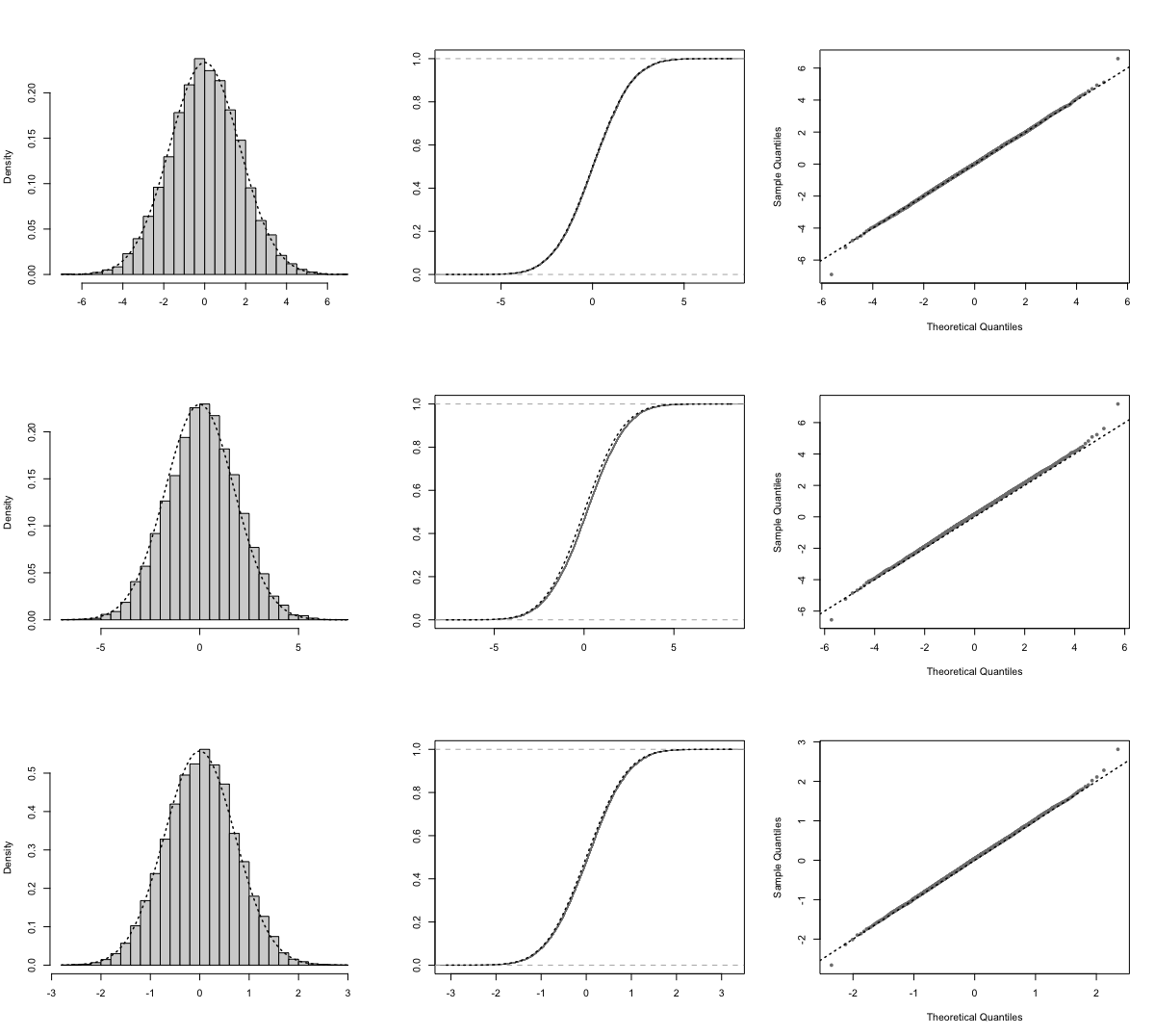}
\caption{Histogram (left), empirical distribution (middle), and Q-Q plot (right) for
estimating $\alpha_1$ (top), $\alpha_2$ (middle), and $\alpha_3$ (bottom). 
The dotted lines are theoretical curves.  
(Type I, true initial parameter, $\varepsilon=0.01$, $n=1000)$}
\label{model1:plot1-small}
\end{figure}
\begin{figure}[htbp]
\centering
\includegraphics[width=0.61\columnwidth]{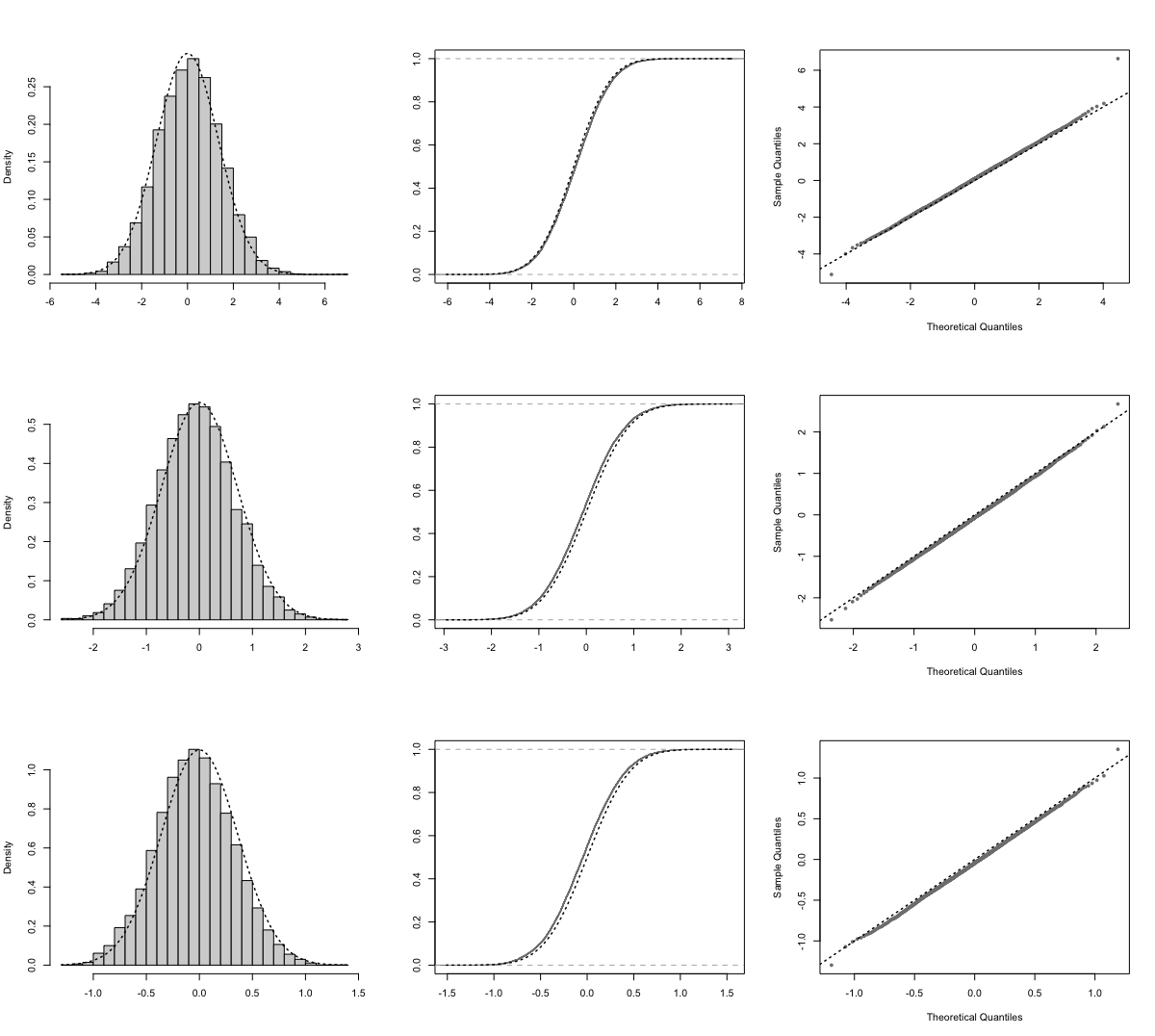}
\caption{Histogram (left), empirical distribution (middle), and Q-Q plot (right) for
estimating $\alpha_4$ (top), $\beta_1$ (middle), and $\beta_2$(bottom).
The dotted lines are theoretical curves.  
 (Type I, true initial parameter,  $\varepsilon=0.01$, $n=1000)$}
 \label{model1:plot2-small}
\end{figure}

\renewcommand{\arraystretch}{1.5}
\begin{table}[htbp]
\begin{center}
\caption{Results of Type I adaptive tests.}
\label{model1_table-small}
\begin{tabular}{ccc|cccc}
\multirow{2}{*}{{\bf{True Case}}}&\multirow{2}{*}{$\varepsilon$}&\multirow{2}{*}{$n$}&\multicolumn{4}{c}{{\bf{Judgement}}}\\
&&&Case 1&Case 2&Case 3&Case 4\\
\hline\hline
\multirow{3}{*}{Case 1}&0.05&100				&{\bf{8879}}&690&401&30\\
&0.01&100									&{\bf{8808}}&640&520&32\\
&0.01&1000									&{\bf{9052}}&500&422&26\\
\hline
\multirow{3}{*}{Case 2}&0.05&100				&1703&{\bf{7866}}&81&350\\
&0.01&100									&1838&{\bf{7610}}&110&442\\
&0.01&1000									&0&{\bf{9552}}&0&448\\
\hline
\multirow{3}{*}{Case 3}&0.05&100				&{\bf{7075}}&556&2205&164\\
&0.01&100									&0&0&{\bf{9328}}&672\\
&0.01&1000									&0&0&{\bf{9474}}&526\\
\hline
\multirow{3}{*}{Case 4}&0.05&100				&1364&{\bf{6267}}&420&1949\\
&0.01&100									&0&0&1948&{\bf{8052}}\\
&0.01&1000									&0&0&0&{\bf{10000}}\\
\end{tabular}
\end{center}
\end{table}

\renewcommand{\arraystretch}{1.5}
\begin{table}[htbp]
\begin{center}
\caption{Empirical sizes and powers of Type I test statistics.}
\label{model1_esize-small}
\begin{tabular}{ccc|ccc}
\multirow{2}{*}{{\bf{True Case}}}&\multirow{2}{*}{$\varepsilon$}&\multirow{2}{*}{$n$}&\multicolumn{3}{c}{{\bf{Testing parameter}}}\\
&&&$\alpha$&&$\beta$\\
\hline\hline
\multirow{3}{*}{Case 1}&0.05&100				&{\bf{0.0431}}&&0.0720\\
&0.01&100									&{\bf{0.0552}}&&0.0672\\
&0.01&1000									&{\bf{0.0448}}&&{\bf{0.0526}}\\
\hline
\multirow{3}{*}{Case 2}&0.05&100				&{\bf{0.0431}}&&0.8216\\
&0.01&100									&{\bf{0.0552}}&&0.8052\\
&0.01&1000									&{\bf{0.0448}}&&{\bf{1.0000}}\\
\hline
\multirow{3}{*}{Case 3}&0.05&100				&0.2369&&0.0720\\
&0.01&100									&{\bf{1.0000}}&&0.0672\\
&0.01&1000									&{\bf{1.0000}}&&{\bf{0.0526}}\\
\hline
\multirow{3}{*}{Case 4}&0.05&100				&0.2369&&0.8216\\
&0.01&100									&{\bf{1.0000}}&&0.8052\\
&0.01&1000									&{\bf{1.0000}}&&{\bf{1.0000}}\\
\end{tabular}
\end{center}
\end{table}

\begin{figure}[htbp]
\centering
\includegraphics[width=0.61\columnwidth]{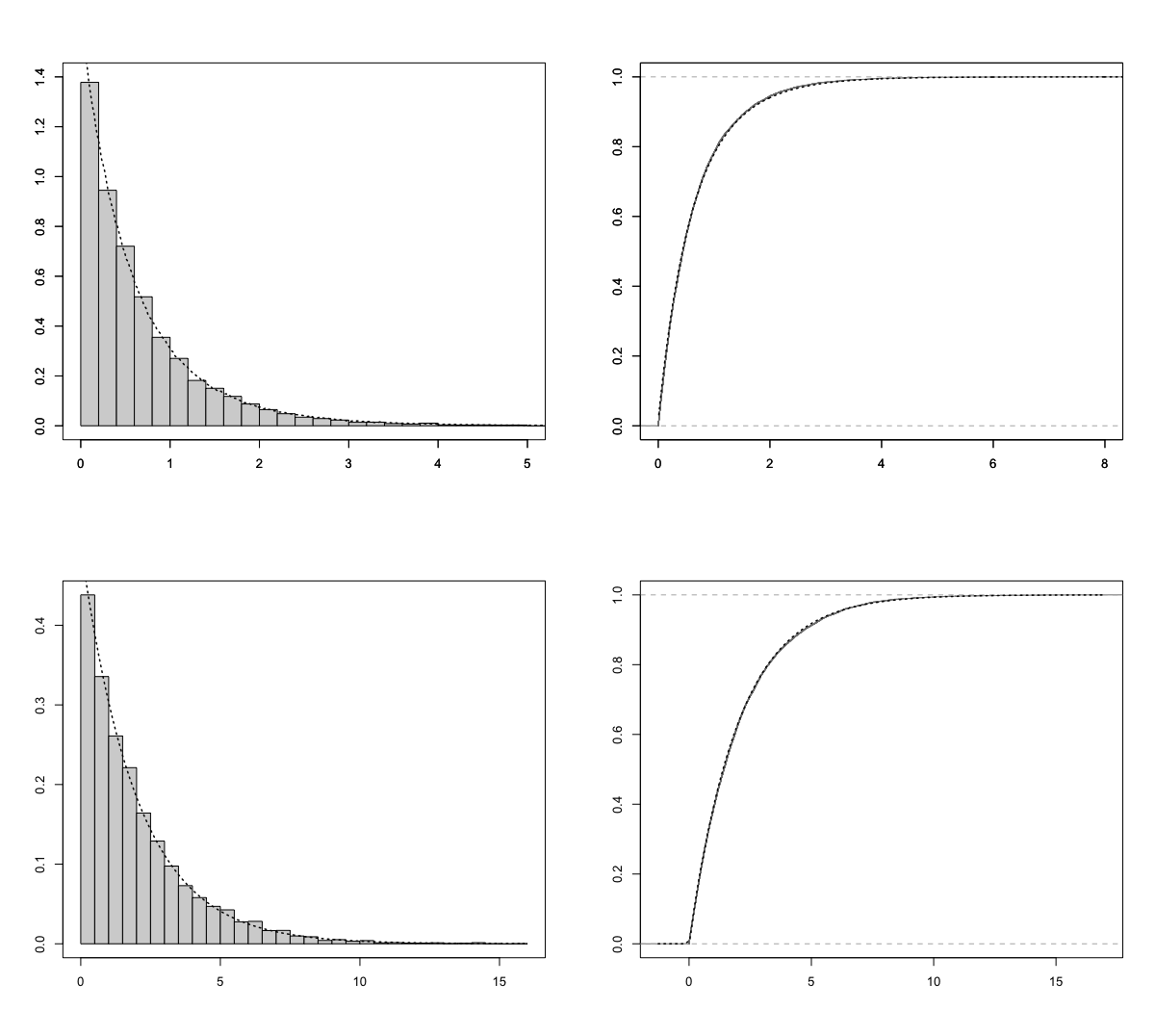}
\caption{Histogram and  empirical distribution for $\Lambda_n^{(1)}$ (top) and $\Lambda_n^{(2)}$ (bottom) under Case 1.
The dotted lines are theoretical curves of $\pi_2$ (top) and $\chi^2_2$ (bottom).  
  (Type I, true initial parameter, $\varepsilon=0.01$, $n=1000)$}
\label{model1:plot3-small}
\end{figure}

\subsection{Model 2 (Case of having the same parameter)}\label{sec:model2}
\ 

Second, we consider the case that the model has the same parameter in the drift and diffusion coefficients,
that is, the case of $\alpha=\beta$ in the SDE \eqref{sde}.
For example, we introduce the following SIR model with the small diffusion coefficient
proposed in Guy et al \cite{Guy_2014, Guy_2015}:
let $X_t=(S_t,I_t),\ t\in[0,T]$, and
\begin{equation}\label{SIR}
\begin{cases}
dS_t = -\beta S_tI_t \ dt + \varepsilon \sqrt{\beta S_tI_t}\ dW_{t,1},  \\
dI_t = \left(\beta S_tI_t-\gamma I_t\right) \ dt 
+ \varepsilon \left(-\sqrt{\beta S_tI_t}\ dW_{t,1}+\sqrt{\gamma I_t}\ dW_{t,2}\right), \\
X_0 = (s_0,i_0)^\top,
\end{cases}
\end{equation}
where both of the drift and diffusion coefficients have $\theta=(\beta,\gamma)$ as unknown parameters 
and $(s_0,i_0)\in(0,1)^2$ is the fixed value. The SIR model describes the simple epidemic spread 
with the three mutually exclusive health states : Susceptible-Infectious-Removed from the infectious chain. 
The parameter $\beta$ implies the transmission rate and  the parameter $\gamma$ implies the recovery rate. Moreover, the basic reproduction number $R_0:=\beta/\gamma$ implies the average number of secondary cases generated by 
one infected person.
Therefore, when the value $R_0$ is greater than one, then the epidemic spreads and vice versa.   
We treat the cases of $\theta_0=(1.2,1.0)$ and 
$\theta_0=(0.9, 1.0)$. The first case 
corresponds to 
$R_0>1$, and the second case corresponds to $R_0<1$. In this simulation, we set $\varepsilon=10^{-4}\ (=1/\sqrt{N},\ N=10^8$: population size$)$, $(s_0,i_0)=(0.99999,0.00001)$ and the parameter space $\Theta=[0.01,100]^2$. 
In order to treat the 10 days, monthly and yearly data, we set $(n,T,h_n)=(10,1, 1/10),\ (30,1,1/30),\ (360,12,1/30)$ and determine the balance coefficient $\rho=4$. 
Therefore, the Type I and Type II methods described in Section \ref{sec:supp}-(ii)-(a) are used for estimation. 
It is remarked that this model does not have to consider the initial parameter problem. This is because the model has only two parameters, and the simulation does not fail the parameters optimization. 
Therefore, we set that the initial parameter is the true value. 
The simulation is repeated 10000 times for each setting.

Tables \ref{model2:table1} and \ref{model2:table2} show the simulation results 
with the two types of true parameters settings. In both tables, the sample means are close to the true value and the sample standard deviations are also close to the theoretical standard deviations. Overall, the simulations for both Type I and Type II methods have good behavior.

Next, we conduct the following parametric test: 
\begin{equation}\label{ada.stest-model2}
\begin{cases}
H_0:\ (\beta,\gamma)=(1.2,1.0),\\
H_1:\ \text{not}\ H_0^{(1)}.
\end{cases}
\end{equation}
In this test, we utilize Type II estimator and construct a test statistic 
by using the method proposed in Section \ref{sec:supp-test}. The test statistic is expected to converge in distribution to $\chi_2^2$ under $H_0$. Hence this test is rejected when the realization of test statistic is greater than $\chi_2^2(\delta)$.
We choose 
$(1.2,1.0),$ or $(1.3, 0.9)$ 
as 
true parameters $(\beta^*,\gamma^*)$, 0.05 as the significance level $\delta$ and other simulation settings are the same as the estimation case in this section. Table \ref{model2_table-small} shows the number of counts of 
$H_0$ or $H_1$ selected by the Type II test
 and empirical sizes and powers. 
On the whole, the Type II test
 has good performance.

\begingroup
\renewcommand{\arraystretch}{1.8}
\begin{table}[htbp]
\begin{center}
\caption{\ Mean and standard deviation (S.D.) of the estimators in the case of $\theta_0=(1.2,1.0)$.}
\label{model2:table1}
\begin{tabular}{cccccccccccc}
\hline
&&&&&$\beta$&&&$\gamma$&&\\
n&&$T$&&Method&Mean&S.D.&Theoretical S.D.&Mean&S.D.&Theoretical S.D.\\
\hline\hline
\multirow{2}{*}{10}&&\multirow{2}{*}{1}&&Type I&1.199507&0.035437&0.034884&0.999758&0.032008&0.031845\\
&&&&Type II&1.199570&0.035440&0.034884&0.999802&0.032010&0.031845\\
\hline
\multirow{2}{*}{30}&&\multirow{2}{*}{1}&&Type I&1.199555&0.033724&0.033545&1.000083&0.030760&0.030622\\
&&&&Type II&1.199578&0.033726& 0.033545&1.000099&0.030762&0.030622\\
\hline
\multirow{2}{*}{360}&&\multirow{2}{*}{12}&&Type I&1.199709&0.004906&0.004915&1.000173&0.004553&0.004486\\
&&&&Type II&1.199712&0.004906&0.004915&1.000176&0.004554&0.004486\\
\hline
\end{tabular}
\end{center}
\end{table}
\endgroup

\begingroup
\renewcommand{\arraystretch}{1.8}
\begin{table}[htbp]
\begin{center}
\caption{\ Mean and standard deviation (S.D.) of the estimators in the case of $\theta_0=(0.9,1.0)$.}
\label{model2:table2}
\begin{tabular}{cccccccccccc}
\hline
&&&&&$\beta$&&&$\gamma$&&\\
n&&$T$&&Method&Mean&S.D.&Theoretical S.D.&Mean&S.D.&Theoretical S.D.\\
\hline\hline
\multirow{2}{*}{10}&&\multirow{2}{*}{1}&&Type I&0.899630&0.032820&0.032337&0.999791&0.034273&0.034086\\
&&&&Type II&0.899663&0.032819&0.032337&0.999833&0.034279&0.034086\\
\hline
\multirow{2}{*}{30}&&\multirow{2}{*}{1}&&Type I&0.899652&0.031437&0.031253&1.000083&0.033082&0.032944\\
&&&&Type II&0.899659&0.031440&0.031253&1.000092&0.033078&0.032944\\
\hline
\multirow{2}{*}{360}&&\multirow{2}{*}{12}&&Type I&0.899130&0.011284&0.011358&1.000537&0.012076&0.011972\\
&&&&Type II&0.899144&0.011286&0.011358&1.000555&0.012076&0.011972\\
\hline
\end{tabular}
\end{center}
\end{table}
\endgroup

\renewcommand{\arraystretch}{1.6}
\begin{table}[htbp]
\begin{center}
\caption{Results of  the Type II test.}
\label{model2_table-small}
\begin{tabular}{ccc|ccc}
\multirow{2}{*}{{\bf{True Case}}}&\multirow{2}{*}{$n$}&\multirow{2}{*}{$T$}&\multicolumn{2}{c}{{\bf{Judgement}}}&\multirow{1}{*}{{\bf{Empirical}}}\\
&&&$H_0$&$H_1$&\multirow{1}{*}{{\bf{Size or Power}}}\\
\hline\hline
\multirow{2}{*}{$H_0$}&10&1			&{\bf{9471}}&529&{\bf{0.0529}}\\
\multirow{2}{*}{$(\beta^*,\gamma^*)=(1.2,1.0)$}&30&1									&{\bf{9488}}&512&{\bf{0.0512}}\\
&360&12									&{\bf{9490}}&510&{\bf{0.0510}}\\
\hline
\multirow{2}{*}{$H_1$}&10&1				&202&{\bf{9708}}&{\bf{0.9708}}\\
\multirow{2}{*}{$(\beta^*,\gamma^*)=(1.3,0.9)$}&30&1									&133&{\bf{9867}}&{\bf{0.9867}}\\
&360&12									&0&{\bf{10000}}&{\bf{1.0000}}\\
\end{tabular}
\end{center}
\end{table}

\newpage
\subsection{Model 3 (Case of estimating only drift parameter)}\label{sec:model3}
\ 

Third, we consider the case of estimating only the drift parameter. 
In particular, we treat the case 
where the diffusion coefficient is the identity matrix.
\begin{equation}\label{model3}
dX_t = 
\begin{pmatrix}
1-\alpha_1 X_{t,1}-5\sin(\alpha_2 X_{t,2}^2)\\
2-\alpha_3 X_{t,2}-5\sin(\alpha_4 X_{t,3}^2)\\
3-\alpha_5 X_{t,3}-5\sin(\alpha_6 X_{t,1}^2)
\end{pmatrix}
dt
+\varepsilon
dW_t,\quad t\in[0,1],\quad
X_0 = \begin{pmatrix}
1\\
1\\
1
\end{pmatrix},
\end{equation}
where $\alpha=(\alpha_1,\alpha_2,\alpha_3,\alpha_4,\alpha_5,\alpha_6)$ are unknown parameters. Assume that the parameter space is $\Theta_\alpha=[0.01,30]^2$ and the true parameter values are $\alpha_0=(3,7,2,8,1,6)$. We treat the case of $(\varepsilon,n)=(0.01,100)$, $(0.001,100)$, $(0.001,1000)$ and choose $v=3$. We estimate the parameters 
with the Type I method and Type II method. 
Moreover, we estimate the initial parameters with 
the following uniform +  {\bf optim()} method:
\begin{enumerate}[Step 1.]
\item Generate 20000 uniform random numbers $\alpha_{0,m}$ ($m=1,\ldots, 20000$) on $[0.01,30]^6$.
\item Compute 
\begin{align*}
\tilde{\alpha}_{m}^{(1)}&=\argmin_{\alpha \in \Theta_\alpha} 
U_{\varepsilon,n,v}^{(1)}(\alpha),\quad\text{(case of Type I)},\\
\hat{\alpha}_{m}^{(1)}&=\argmin_{\alpha \in \Theta_\alpha}  V_{\varepsilon,n,v}^{(1)}(\alpha),\quad\text{(case of Type II)},\\
\end{align*}
by means of {\bf optim()} in the R language, where 
the uniform random numbers $\alpha_{0,m}$ are used 
as the initial value for optimization.
\item Define the initial estimator $\tilde{\alpha}_{\mathrm{init}}^{(1)}$ or $\hat{\alpha}_{\mathrm{init}}^{(1)}$ as
\begin{align*}
\tilde{\alpha}_{\mathrm{init}}^{(1)}&=\argmin_{\alpha \in \Theta_\alpha}  \left\{U_{\varepsilon,n,v}^{(1)}(\tilde{\alpha}_{1}^{(1)}),U_{\varepsilon,n,v}^{(1)}(\tilde{\alpha}_{2}^{(1)}),\ldots,U_{\varepsilon,n,v}^{(1)}(\tilde{\alpha}_{20000}^{(1)})\right\},\quad\text{(case of Type I)},\\
\hat{\alpha}_{\mathrm{init}}^{(1)}&=\argmin_{\alpha \in \Theta_\alpha}   \left\{V_{\varepsilon,n,v}^{(1)}(\hat{\alpha}_{1}^{(1)}),V_{\varepsilon,n,v}^{(1)}(\hat{\alpha}_{2}^{(1)}),\ldots,V_{\varepsilon,n,v}^{(1)}(\hat{\alpha}_{20000}^{(1)})\right\},\quad\text{(case of Type II)}.
\end{align*}
\end{enumerate}
The simulations are repeated 1000 times for each estimation method.

Table \ref{model3:table1} shows the sample means, standard deviations of the simulated estimator values and the computation times of estimation for one sample path. Regarding accuracy of the estimation, we see that 
both methods performed well for each setting. 
From the viewpoint of the computation time,
however,  the estimators of the Type II method 
are computed more quickly than those of the Type I method. 
This is because that the contrast function 
of the Type II method does not optimize the higher order term 
but put the estimated value $\bar{\alpha}$ into $Q_{l,k}(\bar{\alpha})$. 
As the result, we recommend using the Type II method.

\begingroup
\renewcommand{\arraystretch}{1.2}
\begin{table}[tbp]
\begin{center}
\caption{\ Mean (S.D.) of the simulated values and the computation times.}
\label{model3:table1}
\begin{tabular}{ccccccccccc}
\hline
$\varepsilon$&$n$&&Method&$\alpha_1(3)$&$\alpha_2(7)$&$\alpha_3(2)$&$\alpha_4(8)$&$\alpha_5(1)$&$\alpha_6(6)$&Time (m)\\
\hline\hline
\multirow{4}{*}{0.01}&\multirow{4}{*}{100}&&\multirow{2}{*}{Type I}&3.0032&7.0127&1.9983&7.9964&1.0049&5.9979&\multirow{2}{*}{58}\\
&&&&(0.1036)&(0.3040)&(0.0127)&(0.0047)&(0.0227)&(0.0131)&\\
&&&\multirow{2}{*}{Type II}&2.9891&7.0030&1.9987&7.9958&0.9989&6.0032&\multirow{2}{*}{11}\\
&&&&(0.0570)&(0.0127)&(0.0124)&(0.0068)&(0.0088)&(0.0090)&\\
\hline
\multirow{4}{*}{0.001}&\multirow{4}{*}{100}&&\multirow{2}{*}{Type I}&2.9997&6.9990&1.9986&7.9958&1.0083&6.0003&\multirow{2}{*}{53}\\
&&&&(0.0079)&(0.0028)&(0.0045)&(0.0052)&(0.0240)&(0.0121)&\\
&&&\multirow{2}{*}{Type II}&2.9977&7.0002&1.9987&7.9947&0.9994&6.0022&\multirow{2}{*}{9}\\
&&&&(0.0051)&(0.0024)&(0.0050)&(0.0081)&(0.0010)&(0.0014)&\\
\hline
\multirow{4}{*}{0.001}&\multirow{4}{*}{1000}&&\multirow{2}{*}{Type I}&2.99994&7.00001&1.99993&7.99997&0.99999&6.00005&\multirow{2}{*}{204}\\
&&&&(0.00182)&(0.00034)&(0.00108)&(0.00018)&(0.00082)&(0.00072)&\\
&&&\multirow{2}{*}{Type II}&2.99994&7.00001&1.99993&7.99997&0.99999&6.00005&\multirow{2}{*}{21}\\
&&&&(0.00182)&(0.00034)&(0.00108)&(0.00018)&(0.00082)&(0.00072)&\\
\hline
\end{tabular}
\end{center}
\end{table}
\endgroup

\newpage
\section{Proofs}\label{sec6}
In this section, we treat the case of $T=1$ without loss of generality. 
Let the $\sigma$-field $\mathcal{G}_k^n:=\sigma(X_{t_k^n}:s\le t_k^n)$, and for any vector $u$ (matrix $A$), $u^i$ ($A^{i,j}$) denotes the $i$-th element ($(i,j)$-th element) of the vector (matrix). 
For any positive sequence $u_n$, $R:\mathbb{R}\times\mathbb{R}^d\to\mathbb{R}$ denotes a function with a constant $C>0$ such that for all $x\in\mathbb{R}^d$, $|R(u_n,x)|\leq u_nC(1+|x|)^C$, and $R_d$ denotes the $d$-dimensional vector whose element satisfies the definition of the function $R$.
In order to prove the theorems and the lemmas proposed in Section \ref{sec2}, 
we introduce the following restrictive condition of [{\bf{A2}}]:
\begin{enumerate}
\item[{\bf[A2']}] 
For all $(x,\beta)\in\mathbb{R}^d\times \Theta_{\beta}$, the matrix $[\sigma\sigma^\top](x,\beta)$ is positive definite. Moreover, the functions
$\sigma,\  [\sigma\sigma^\top]^{-1}$ (respectively $b$) are bounded and smooth with bounded derivatives of any order on $\mathbb{R}^d\times\Theta_\beta$ (respectively $\mathbb{R}^d\times\Theta_\alpha$).
\end{enumerate}
The following proposition enables us to prove 
the theorems and the lemmas under [{\bf A1}], [{\bf A2'}], [{\bf A3}], [{\bf A4}] and [{\bf B}].

\begin{proposition}\label{prop:1}
In order to show that the conclusions of Theorems \ref{thm:1}-\ref{thm:2} and Lemmas \ref{lem:1}-\ref{lem:2} hold under [{\bf A1}]-[{\bf A4}] and [{\bf B}],
it is enough to prove that they hold under [{\bf A1}], [{\bf A2'}], [{\bf A3}], [{\bf A4}] and [{\bf B}].
\end{proposition}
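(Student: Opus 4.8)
The plan is to implement a standard localization argument. The essential point is that all the estimators defined in Section~\ref{sec3} depend on the observations only through the increments $X_{t_k^n}-X_{t_{k-1}^n}$ and on the coefficients $b$, $\sigma$, their derivatives, and $[\sigma\sigma^\top]^{-1}$ \emph{evaluated at the observed points} $X_{t_{k-1}^n}$ (and on $(\mathcal{L}_\alpha^0)^j b$, which is again a polynomial expression in $b$ and its derivatives). Under assumption [{\bf A2}] these coefficients are only controlled on the open convex set $\mathcal{U}$ containing the deterministic curve $(X_t^0)_{t\in[0,1]}$. Since $\varepsilon\to0$, the process $X$ concentrates around $X^0$; the idea is to replace $b$, $\sigma$ on all of $\mathbb{R}^d$ by globally well-behaved functions agreeing with the originals on a neighborhood of $(X_t^0)$, run the whole theory for the modified model (which satisfies [{\bf A2'}]), and then argue that with probability tending to one the two models produce identical estimators.

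First I would fix a relatively compact open set $\mathcal{U}_0$ with $\{X_t^0 : t\in[0,1]\}\subset \mathcal{U}_0 \subset \overline{\mathcal{U}_0}\subset\mathcal{U}$, and choose a smooth cutoff $\phi\colon\mathbb{R}^d\to[0,1]$ with $\phi\equiv1$ on a neighborhood of $\overline{\mathcal{U}_0}$ and $\mathrm{supp}\,\phi\subset\mathcal{U}$. Define $\tilde b(x,\alpha)$ and $\tilde\sigma(x,\beta)$ so that they coincide with $b,\sigma$ on $\{\phi=1\}$, are bounded with bounded derivatives of all orders, and satisfy $[\tilde\sigma\tilde\sigma^\top](x,\beta)$ positive definite uniformly on $\mathbb{R}^d\times\Theta_\beta$ (for instance, interpolating via $\phi$ towards a fixed positive definite matrix outside $\mathcal{U}$, and towards $0$ for the drift outside a slightly larger set); compactness of $\Theta$ makes all the required uniform bounds available. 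By construction the modified coefficients satisfy [{\bf A2'}], and [{\bf A1}], [{\bf A3}], [{\bf A4}] carry over since they only involve the coefficients along $(X_t^0)$, where $\tilde b=b$, $\tilde\sigma=\sigma$; moreover the ODE \eqref{ode} is unchanged, so $X^0$, $I(\theta_0)$, $J_b(\alpha_0)$, $K_b(\theta_0)$ are the same for both models.

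Next I would let $\tilde X$ denote the solution of the modified SDE and introduce the event $A_{\varepsilon,n}:=\{X_{t_k^n}\in \mathcal{U}_0 \text{ for all } k=0,\ldots,n\}$ (equivalently the analogous event for $\tilde X$). On $A_{\varepsilon,n}$ the increments and all coefficient evaluations entering every contrast function $U^{(\cdot)}_{\varepsilon,n,v}$, $V^{(\cdot)}_{\varepsilon,n,v}$ coincide for the two models, hence so do all the argmin estimators; thus it suffices to prove $P(A_{\varepsilon,n})\to1$ as $\varepsilon\to0$, $n\to\infty$. This is the technical heart of the argument. For the modified model one has, by a Gronwall-type estimate together with standard moment bounds for the small-noise SDE (see e.g.\ Freidlin and Wentzell \cite{Freidlin_1998} or the large-deviation / moment estimates in Yoshida \cite{yoshida1992Malliavin}), that $\sup_{t\in[0,1]}|\tilde X_t - X_t^0| = O_P(\varepsilon)$, in fact with exponential or polynomial tail bounds uniform in $n$; since $(X_t^0)$ stays at positive distance from $\partial\mathcal{U}_0$, this forces $P(\sup_k |\tilde X_{t_k^n}-X_{t_k^n}^0|<\mathrm{dist}(\{X^0\},\partial\mathcal{U}_0))\to1$. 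The main obstacle is precisely making this uniform sup-norm concentration estimate rigorous with the right uniformity in $n$ and, if convergence of moments of the estimators is needed downstream, with the polynomial-type tail control built into the function class $R(u_n,x)$ used in Section~\ref{sec6}; this is routine for the \emph{modified} globally-Lipschitz model but must be stated carefully. Finally, $P(A_{\varepsilon,n})\to1$ gives, for any bounded continuous $g$, $|E[g(\text{estimator})] - E[g(\text{estimator for modified model})]| \le 2\|g\|_\infty P(A_{\varepsilon,n}^c)\to0$, and likewise $P(\text{two estimators differ})\to0$, so convergence in probability and convergence in distribution transfer verbatim from the modified model to the original one. This proves the proposition.
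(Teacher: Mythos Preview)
Your localization argument is correct and is precisely the approach the paper takes, which simply defers to Proposition~1 of Gloter and S\o rensen \cite{Gloter_2009} for the details. One minor sharpening: the event should be the continuous-time event $\{\tilde X_t\in\{\phi=1\}\ \text{for all }t\in[0,1]\}$, so that strong uniqueness of the SDE forces $X\equiv\tilde X$ pathwise on it; only then do the discrete-time observations (and hence all contrast functions and estimators) coincide for the two models as you claim.
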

\begin{proof}[\bf Proof]
This result is obtained in an analogous manner to the proof of Proposition 1 in Gloter and  S\o rensen \cite{Gloter_2009}. We omit the detailed proof.
\end{proof}
\begin{proof}[\bf Proof of Lemma \ref{lem:1}]
First, we   show $\tilde{\alpha}_{\varepsilon, n}^{(1)}\overset{P}{\to}\alpha_0$. One deduces that
\begin{align*}
U_{\varepsilon,n,v}^{(1)}(\alpha)&=\varepsilon^{-2}n\sum_{k=1}^{n}{\left(P_{v,k}(\alpha)-P_{v,k}(\alpha_0)+P_{v,k}(\alpha_0)\right)^\top\left(P_{v,k}(\alpha)-P_{v,k}(\alpha_0)+P_{v,k}(\alpha_0)\right)}\\
&=\varepsilon^{-2}n\sum_{k=1}^{n}{\left(P_{v,k}(\alpha)-P_{v,k}(\alpha_0)\right)^\top\left(P_{v,k}(\alpha)-P_{v,k}(\alpha_0)\right)}\\
&\quad+ 2\varepsilon^{-2}n\sum_{k=1}^{n}{\left(P_{v,k}(\alpha)-P_{v,k}(\alpha_0)\right)^\top P_{v,k}(\alpha_0)}\\
&\quad+ \varepsilon^{-2}n\sum_{k=1}^{n}{P_{v,k}(\alpha_0)^\top P_{v,k}(\alpha_0)}.
\end{align*}
Hence, it follows from [{\bf A2'}] and Lemma 4 in Gloter and S\o rensen \cite{Gloter_2009} that

\begin{align}
\varepsilon^2\left(U_{\varepsilon,n,v}^{(1)}(\alpha)-U_{\varepsilon,n,v}^{(1)}(\alpha_0)\right)&=
n\sum_{k=1}^{n}{\left(P_{v,k}(\alpha)-P_{v,k}(\alpha_0)\right)^\top\left(P_{v,k}(\alpha)-P_{v,k}(\alpha_0)\right)}\notag\\
&\quad+ 2n\sum_{k=1}^{n}{\left(P_{v,k}(\alpha)-P_{v,k}(\alpha_0)\right)^\top P_{v,k}(\alpha_0)}\notag\\
&=\frac{1}{n}\sum_{k=1}^{n}{\left(b(X_{t_{k-1}^n},\alpha_0)-b(X_{t_{k-1}^n},\alpha)\right)^\top\left(b(X_{t_{k-1}^n},\alpha_0)-b(X_{t_{k-1}^n},\alpha)\right)}\notag\\
&\quad+ 2\sum_{k=1}^{n}{\left(b(X_{t_{k-1}^n},\alpha_0)-b(X_{t_{k-1}^n},\alpha)+R_d(n^{-1},X_{t_{k-1}^n})\right)^\top P_{v,k}(\alpha_0)}\notag\\
&\quad+\frac{1}{n}\sum_{k=1}^{n}{R(n^{-1},X_{t_{k-1}^n})}\notag\\
&\overset{P}{\to}U_1(\alpha,\alpha_0)\quad\text{uniformly in }\alpha,\label{U1_conv-small}
\end{align}
where $U_1$ is defined by \eqref{U1-small}:
\begin{align*}
U_1(\alpha,\alpha_0)=\int_{0}^{1}{\left(b(X_s^0,\alpha_0)-b(X_s^0,\alpha)\right)^\top\left(b(X_s^0,\alpha_0)-b(X_s^0,\alpha)\right)}ds.
\end{align*}
Let $\omega\in\Omega$ be fixed. It follows from the compactness of $\Theta_\alpha$ that for any sequence $(\varepsilon_m,n_m)$, there exists subsequence $(\varepsilon_m',n_m')$ such that
\begin{align}\label{conv:alpha}
\tilde{\alpha}_{\varepsilon_m',n_m'}^{(1)}(\omega)\to\alpha_\infty\in\Theta_\alpha\quad(\varepsilon_m'\to0,n_m'\to\infty).
\end{align}
From the continuity of $U_1$ and the definition of $\tilde{\alpha}_{\varepsilon,n}^{(1)}$, one deduces that
\begin{align*}
0\ge\varepsilon^2\left(U_{\varepsilon,n,v}^{(1)}(\tilde{\alpha}_{\varepsilon_m',n_m'}^{(1)}(\omega))-U_{\varepsilon,n,v}^{(1)}(\alpha_0)\right)(\omega)\to U_1(\alpha_\infty,\alpha_0)\ge0.
\end{align*}
Hence, we have $\alpha_\infty=\alpha_0$ from the 
identifiability condition [{\bf A3}], and \eqref{conv:alpha} means that $\tilde{\alpha}_{\varepsilon, n}^{(1)}\overset{P}{\to}\alpha_0$.

Second, we  prove $\varepsilon^{-1}(\tilde{\alpha}^{(1)}_{\varepsilon,n}-\alpha_0)=O_P(1)$. It follows from Taylor's theorem that
\begin{align}\label{taylor:U1}
-\varepsilon\partial_{{\alpha}}U_{\varepsilon,n,v}^{(1)}(\alpha_0)=\left(\varepsilon^2\int_{0}^{1}{\partial_{{\alpha}}^2U_{\varepsilon,n,v}^{(1)}(\alpha_0+u(\tilde{\alpha}_{\varepsilon,n}^{(1)}-\alpha_0))}du\right)\varepsilon^{-1}(\tilde{\alpha}^{(1)}_{\varepsilon,n}-\alpha_0).
\end{align}
For $1\le l\le p$ and $1\le l_1,l_2\le p$, we deduce from [{\bf A2'}] and Lemma 4 in Gloter and S\o rensen \cite{Gloter_2009} that
\begin{align}\label{taylor:U1:part1}
-\varepsilon\partial_{{\alpha}}U_{\varepsilon,n,v}^{(1)}(\alpha_0)&=2\varepsilon^{-1}\sum_{k=1}^{n}{\sum_{i=1}^{d}{\left(\partial_{{\alpha}_l}b^i(X_{t_{k-1}^n},\alpha_0)+R(n^{-1},X_{t_{k-1}^n})\right)P_{v,k}^i(\alpha_0)}}
=O_P(1),
\end{align}
and
\begin{align}\label{conv:B1}
\varepsilon^2\partial_{{\alpha}_{l_1l_2}}^2U_{\varepsilon,n,v}^{(1)}(\alpha)&=
-2\sum_{k=1}^{n}{\sum_{i=1}^{d}{\left(\partial_{{\alpha}_{l_1l_2}}^2b^i(X_{t_{k-1}^n},\alpha)-R(n^{-1},X_{t_{k-1}^n})\right)\left(P^i_{v,k}(\alpha)-P^i_{v,k}(\alpha_0)\right)}}\notag\\
&\quad+2n^{-1}\sum_{k=1}^{n}{\sum_{i=1}^{d}{\left(\partial_{{\alpha}_{l_1}}b^i(X_{t_{k-1}^n},\alpha)-R(n^{-1},X_{t_{k-1}^n})\right)\left(\partial_{{\alpha}_{l_2}}b^i(X_{t_{k-1}^n},\alpha)-R(n^{-1},X_{t_{k-1}^n})\right)}}\notag\\
&=\frac{2}{n}\sum_{k=1}^{n}{\sum_{i=1}^{d}{\partial_{{\alpha}_{l_1l_2}}^2b^i(X_{t_{k-1}^n},\alpha)\left(b^i(X_{t_{k-1}^n},\alpha)-b^i(X_{t_{k-1}^n},\alpha_0)\right)}}\notag\\
&\quad+\frac{2}{n}\sum_{k=1}^{n}{\sum_{i=1}^{d}{\partial_{{\alpha}_{l_1}}b^i(X_{t_{k-1}^n},\alpha)\partial_{{\alpha}_{l_2}}b^i(X_{t_{k-1}^n},\alpha)}}+\frac{1}{n}\sum_{k=1}^{n}{\sum_{i=1}^{d}{R(n^{-1},X_{t_{k-1}^n})}}\notag\\
&\overset{P}{\to}2B_1^{l_1,l_2}(\alpha,\alpha_0) \quad\text{uniformly in }\alpha,
\end{align}
where
\begin{align*}
B_1^{l_1,l_2}(\alpha,\alpha_0):&=\int_{0}^{1}{\left(\partial_{{\alpha}_{l_1l_2}}^2b(X_s^0,\alpha)\right)^\top\left(b(X_s^0,\alpha)-b(X_s^0,\alpha_0)\right)}ds+ J_{b}^{l_1,l_2}(\alpha).
\end{align*}
By noting that for all $\lambda\in\mathbb{R}^d\setminus \{0\}$, it follows from [{\bf A4}] that 
\begin{align}\label{ineq:reg}
\eta:=2\lambda^\top B_1(\alpha_0,\alpha_0)\lambda=2\lambda^\top J_b^{l_1,l_2}(\alpha_0)\lambda>0,
\end{align}
and one deduces that
\begin{align}
1&=P\left(2\lambda^\top B_1(\alpha_0,\alpha_0)\lambda>\frac{\eta}{2}\right)\notag\\
&\le P\left(\lambda^\top\left[\int_{0}^{1}{\left\{B_1(\alpha_0,\alpha_0)-B_1(\alpha_0+u(\tilde{\alpha}_{\varepsilon,n}^{(1)}-\alpha_0),\alpha_0)\right\}}du\right]\lambda>\frac{\eta}{12}\right)\label{ineq:Op1-1}\\
&\quad+ P\left(\lambda^\top\left[\int_{0}^{1}{\left\{2B_1(\alpha_0+u(\tilde{\alpha}_{\varepsilon,n}^{(1)}-\alpha_0),\alpha_0)-\varepsilon^2\partial_{{\alpha}}^2U_{\varepsilon,n,v}^{(1)}(\alpha_0+u(\tilde{\alpha}_{\varepsilon,n}^{(1)}-\alpha_0))\right\}}du\right]\lambda>\frac{\eta}{6}\right)\label{ineq:Op1-2}\\
&\quad+P\left(\lambda^\top\left(\varepsilon^2\int_{0}^{1}{\partial_{{\alpha}}^2U_{\varepsilon,n,v}^{(1)}(\alpha_0+u(\tilde{\alpha}_{\varepsilon,n}^{(1)}-\alpha_0))}du\right)\lambda>\frac{\eta}{6}\right)\notag. 
\end{align}
For a sequence $\{r_n\}_{n\in\mathbb{N}}$ such that $r_n\to0\ (n\to\infty)$, we define a set $N_{n,\alpha}$ and an event $A_{n,\alpha}$ as
\begin{align}\label{event:alpha}
N_{n,\alpha}:=\{\alpha\in\Theta_\alpha|\ |\alpha-\alpha_0|\le r_n\},\quad A_{n,\alpha}:=\left\{\tilde{\alpha}_{\varepsilon,n}^{(1)}\in N_{n,\alpha}\right\}.
\end{align}
We then obtain from $\tilde{\alpha}_{\varepsilon, n}^{(1)}\overset{P}{\to}\alpha_0$ that $P(A_{n,\alpha})\to1\ (\varepsilon\to0,n\to\infty)$. Therefore, for the right hand side of \eqref{ineq:Op1-1}, it follows from 
uniform continuity of $B_1$ that
\begin{align*}
&P\left(\lambda^\top\left[\int_{0}^{1}{\left\{B_1(\alpha_0,\alpha_0)-B_1(\alpha_0+u(\tilde{\alpha}_{\varepsilon,n}^{(1)}-\alpha_0),\alpha_0)\right\}}du\right]\lambda>\frac{\eta}{12}\right)\\
&\le P\left(\left\{\sup_{\alpha\in N_{n,\alpha}}\left| B_1(\alpha_0,\alpha_0)-B_1(\alpha,\alpha_0)\right| >\frac{\eta}{12|\lambda|^2}\right\}\cap A_{n,\alpha}\right)+P(A_{n,\alpha}^c)\\
&\le P\left(\sup_{\alpha\in N_{n,\alpha}}\left| B_1(\alpha_0,\alpha_0)-B_1(\alpha,\alpha_0)\right| >\frac{\eta}{12|\lambda|^2}\right)+P(A_{n,\alpha}^c)\\
&\to0\quad(\varepsilon\to0,n\to\infty),
\end{align*}
and for \eqref{ineq:Op1-2}, we deduce from the uniformly convergence \eqref{conv:B1} that
\begin{align*}
&P\left(\lambda^\top\left[\int_{0}^{1}{\left\{2B_1(\alpha_0+u(\tilde{\alpha}_{\varepsilon,n}^{(1)}-\alpha_0),\alpha_0)-\varepsilon^2\partial_{{\alpha}}^2U_{\varepsilon,n,v}^{(1)}(\alpha_0+u(\tilde{\alpha}_{\varepsilon,n}^{(1)}-\alpha_0))\right\}}du\right]\lambda>\frac{\eta}{6}\right)\\
&\le P\left(\sup_{\alpha\in\Theta_\alpha}\left|2B_1(\alpha,\alpha_0)-\varepsilon^2\partial_{{\alpha}}^2U_{\varepsilon,n,v}^{(1)}(\alpha)\right|>\frac{\eta}{6|\lambda|^2}\right)\\
&\to0\quad(\varepsilon\to0,n\to\infty).
\end{align*}
Consequently, we obtain
\begin{align}\label{taylor:U1:part2}
P\left(\lambda^\top\left(\varepsilon^2\int_{0}^{1}{\partial_{{\alpha}}^2U_{\varepsilon,n,v}^{(1)}(\alpha_0+u(\tilde{\alpha}_{\varepsilon,n}^{(1)}-\alpha_0))}du\right)\lambda>\frac{\eta}{6}\right)\to 1\quad(\varepsilon\to0,n\to\infty),
\end{align}
and hence, 
it follows from \eqref{taylor:U1},  \eqref{taylor:U1:part1} 
and \eqref{taylor:U1:part2}
that one has
$\varepsilon^{-1}(\tilde{\alpha}^{(1)}_{\varepsilon,n}-\alpha_0)=O_P(1)$.

Third, we prove the asymptotic normality of $\tilde{\alpha}_{\varepsilon,n}^{(1)}$. In an analogous manner to S\o rensen and Uchida \cite{Sorensen_Uchida_2003}, it is sufficient to show the following two properties:
\begin{align}
&\sup_{u\in[0,1]}\left|\varepsilon^2\partial_{{\alpha}}^2U_{\varepsilon,n,v}^{(1)}(\alpha_0+u(\tilde{\alpha}_{\varepsilon,n}^{(1)}-\alpha_0))-2J_b(\alpha_0)\right|\overset{P}{\to}0,\label{main1:lem1}\\
&-\varepsilon\partial_{{\alpha}}U_{\varepsilon,n,v}^{(1)}(\alpha_0)\overset{d}{\to} N_{p}(0,4K_b(\theta_0))\label{main2:lem1}.
\end{align}
For \eqref{main1:lem1}, it follows from \eqref{conv:B1}, the consistency 
of $\tilde{\alpha}_{\varepsilon,n}^{(1)}$ and [{\bf A2'}] that for all $\delta>0$,
\begin{align*}
&P\left(\sup_{u\in[0,1]}\left|\varepsilon^2\partial_{{\alpha}}^2U_{\varepsilon,n,v}^{(1)}(\alpha_0+u(\tilde{\alpha}_{\varepsilon,n}^{(1)}-\alpha_0))-2J_b(\alpha_0)\right|>\delta\right)\\
&\le P\left(\sup_{u\in[0,1]}\left|\varepsilon^2\partial_{{\alpha}}^2U_{\varepsilon,n,v}^{(1)}(\alpha_0+u(\tilde{\alpha}_{\varepsilon,n}^{(1)}-\alpha_0))-2B_1(\alpha_0+u(\tilde{\alpha}_{\varepsilon,n}^{(1)}-\alpha_0),\alpha_0)\right|>\frac{\delta}{3}\right)\\
&\quad+ P\left(\sup_{u\in[0,1]}\left|2B_1(\alpha_0+u(\tilde{\alpha}_{\varepsilon,n}^{(1)}-\alpha_0),\alpha_0)-2B_1(\alpha_0,\alpha_0)\right|>\frac{\delta}{3}\right)\\
&\quad+ P\left(\sup_{u\in[0,1]}\left|2\int_{0}^{1}{\left(\partial_{{\alpha}_{l_1l_2}}^2b(X_s^0,\alpha_0+u(\tilde{\alpha}_{\varepsilon,n}^{(1)}-\alpha_0))\right)^\top\left(b(X_s^0,\alpha_0+u(\tilde{\alpha}_{\varepsilon,n}^{(1)}-\alpha_0))-b(X_s^0,\alpha_0)\right)}ds\right|>\frac{\delta}{3}\right)\\
&\le P\left(\sup_{\alpha\in\Theta_\alpha}\left|\varepsilon^2\partial_{{\alpha}}^2U_{\varepsilon,n,v}^{(1)}(\alpha)-2B_1(\alpha,\alpha_0)\right|>\frac{\delta}{3}\right)+P\left(\sup_{\alpha\in N_{n,\alpha}}\left|2B_1(\alpha,\alpha_0)-2B_1(\alpha_0,\alpha_0)\right|>\frac{\delta}{3}\right)\\
&\quad+P\left(\sup_{\alpha\in N_{n,\alpha}}\left|b(X_s^0,\alpha)-b(X_s^0,\alpha_0)\right|>\frac{\delta}{6C}\right)+3P(A_{n,\alpha}^c)\\
&\to0\quad(\varepsilon\to0,n\to\infty).
\end{align*}
Regarding \eqref{main2:lem1}, we have for $1\le l\le p$,
\begin{align}\label{asym:lem1}
-\varepsilon\partial_{{\alpha}_l}U_{\varepsilon,n,v}^{(1)}(\alpha_0)&=\sum_{k=1}^{n}{\zeta_{k,1}^l(\alpha_0)}+\sum_{k=1}^{n}{R_d(\varepsilon^{-1}n^{-1},X_{t_{k-1}^n})^\top P_{v,k}(\alpha_0)},
\end{align}
where
\begin{align}\label{xi:1}
\zeta_{k,1}^l(\alpha_0):= 2\varepsilon^{-1} \left(\partial_{{\alpha}_l}b(X_{t_{k-1}^n},\alpha_0)\right)^\top P_{v,k}(\alpha_0).
\end{align}
It follows from Lemma 1 in Gloter and S\o rensen \cite{Gloter_2009} that
\begin{align*}
\mathbb{E}\left[\left|R_d(\varepsilon^{-1}n^{-1},X_{t_{k-1}^n})^\top P_{v,k}(\alpha_0)\right| |\mathcal{G}_{k-1}^n\right]
&=\mathbb{E}\left[\left|\sum_{i=1}^{d}R(\varepsilon^{-1}n^{-1},X_{t_{k-1}^n}) P_{v,k}^i(\alpha_0)\right| |\mathcal{G}_{k-1}^n\right]\\
&\le \sum_{i=1}^{d}{\mathbb{E}\left[|P_{v,k}^i(\alpha_0)||\mathcal{G}_{k-1}^n\right]} R(\varepsilon^{-1}n^{-1},X_{t_{k-1}^n})\\
&\le \sum_{i=1}^{d}{\mathbb{E}\left[|P_{v,k}^i(\alpha_0)|^2|\mathcal{G}_{k-1}^n\right]}^{\frac{1}{2}} R(\varepsilon^{-1}n^{-1},X_{t_{k-1}^n})\\
&\le R(n^{-\frac{3}{2}},X_{t_{k-1}^n}),
\end{align*}
and hence the second term of the right hand side in \eqref{asym:lem1} 
converges to $0$ in probability as $\varepsilon\to0$ and $n\to\infty$. From Theorems 3.2 and 3.4 in Hall and Heyde \cite{hall}, it is sufficient to show the following convergences: for $1\le l_1,l_2\le p$,
\begin{align*}
&\sum_{k=1}^{n}{\mathbb{E}\left[\zeta_{k,1}^{l_1}(\alpha_0)|\mathcal{G}_{k-1}^n\right]\overset{P}{\to}0}\\
&\sum_{k=1}^{n}{\mathbb{E}\left[\zeta_{k,1}^{l_1}(\alpha_0)\zeta_{k,1}^{l_2}(\alpha_0)|\mathcal{G}_{k-1}^n\right]\overset{P}{\to}4K_b^{l_1,l_2}(\theta_0)}\\
&\sum_{k=1}^{n}{\mathbb{E}\left[\zeta_{k,1}^{l_1}(\alpha_0)|\mathcal{G}_{k-1}^n\right]\mathbb{E}\left[\zeta_{k,1}^{l_2}(\alpha_0)|\mathcal{G}_{k-1}^n\right]\overset{P}{\to}0}\\
&\sum_{k=1}^{n}{\mathbb{E}\left[\left(\zeta_{k,1}^{l_1}(\alpha_0)\right)^4|\mathcal{G}_{k-1}^n\right]\overset{P}{\to}0}.
\end{align*}
The above convergences are obtained by Lemma 1 and 4 in Gloter and S\o rensen \cite{Gloter_2009}. We omit the detailed proof.
\end{proof}
\begin{proof}[\bf Proof of Theorem \ref{thm:1}]
{\bf 1st step.} We prove the consistency of $\tilde{\beta}_{\varepsilon,n}$. By the definition of the contrast function $U_{\varepsilon,n,v}^{(2)}(\beta|\bar{\alpha})$, we have
\small
\begin{align}
&\frac{1}{n}\left(U_{\varepsilon,n,v}^{(2)}(\beta|\tilde{\alpha}_{\varepsilon,n}^{(1)})-U_{\varepsilon,n,v}^{(2)}(\beta_0|\tilde{\alpha}_{\varepsilon,n}^{(1)})\right)\notag\\
&=
\frac{1}{n}\sum_{k=1}^{n}{\left\{\log\det\left([\sigma\sigma^\top](X_{t_{k-1}^n},\beta)[\sigma\sigma^\top]^{-1}(X_{t_{k-1}^n},\beta_0)\right)\right\}}\label{U2:1}\\
&\ +\varepsilon^{-2}\sum_{k=1}^{n}{P_{v,k}^\top(\alpha_0)\left([\sigma\sigma^\top]^{-1}(X_{t_{k-1}^n},\beta)-[\sigma\sigma^\top]^{-1}(X_{t_{k-1}^n},\beta_0)\right)P_{v,k}(\alpha_0)}\label{U2:2}\\
&\ +2\varepsilon^{-2}n^{-1}\sum_{k=1}^{n}{\left(nP_{v,k}(\tilde{\alpha}_{\varepsilon,n}^{(1)})-nP_{v,k}(\alpha_0)\right)^\top\left([\sigma\sigma^\top]^{-1}(X_{t_{k-1}^n},\beta)-[\sigma\sigma^\top]^{-1}(X_{t_{k-1}^n},\beta_0)\right)P_{v,k}(\alpha_0)}\label{U2:3}\\
&\ +(\varepsilon n)^{-2}\sum_{k=1}^{n}{\left(nP_{v,k}(\tilde{\alpha}_{\varepsilon,n}^{(1)})-nP_{v,k}(\alpha_0)\right)^\top\left([\sigma\sigma^\top]^{-1}(X_{t_{k-1}^n},\beta)-[\sigma\sigma^\top]^{-1}(X_{t_{k-1}^n},\beta_0)\right)\left(nP_{v,k}(\tilde{\alpha}_{\varepsilon,n}^{(1)})-nP_{v,k}(\alpha_0)\right)}\label{U2:4}.
\end{align}
\normalsize
It follows from Lemmas 4 and 5 in Gloter and S\o rensen \cite{Gloter_2009} 
that the sum of \eqref{U2:1} and \eqref{U2:2}
converges to $U_2(\beta,\beta_0)$ in probability uniformly in   $\beta$, where $U_2$ is defined by \eqref{U2:def-small}:
\small
\begin{align*}
U_2(\beta,\beta_0)=\int_{0}^{1}\left\{\log\det\left([\sigma\sigma^\top](X_s^0,\beta)[\sigma\sigma^\top]^{-1}(X_s^0,\beta_0)\right)+\mathrm{tr}\left([\sigma\sigma^\top]^{-1}(X_s^0,\beta)[\sigma\sigma^\top](X_s^0,\beta_0)\right)-d\right\}ds.
\end{align*}
\normalsize
Noting that
the condition [{\bf A2'}] leads to the Lipschitz continuity
of the functions $nP_{v,k}$ and $[\sigma\sigma^\top]^{-1}$, we deduce from Lemma 1-(5) in Gloter and S\o rensen \cite{Gloter_2009}  and Lemma \ref{lem:1} in this paper that  \eqref{U2:3} and \eqref{U2:4} converge to 0 in probability uniformly in   $\beta$.
Therefore, one deduces that
\begin{align}\label{conv:U2}
\frac{1}{n}\left(U_{\varepsilon,n,v}^{(2)}(\beta|\tilde{\alpha}_{\varepsilon,n}^{(1)})-U_{\varepsilon,n,v}^{(2)}(\beta_0|\tilde{\alpha}_{\varepsilon,n}^{(1)})\right)
\overset{P}{\to}U_2(\beta,\beta_0)\quad\text{uniformly in } \beta.
\end{align}
Let $\omega\in\Omega$ be fixed. It follows from the compactness of $\Theta$ and the consistency of $\tilde{\alpha}_{\varepsilon,n}^{(1)}$ that for any sequence $(\varepsilon_m,n_m)$, 
there exists a subsequence $(\varepsilon_m',n_m')$ such that
\begin{align}\label{conv:beta}
\left(\tilde{\alpha}_{\varepsilon_m',n_m'}^{(1)}(\omega), \tilde{\beta}_{\varepsilon_m',n_m'}(\omega)\right)\to(\alpha_0,\beta_\infty)\in\Theta\quad(\varepsilon_m'\to0,n_m'\to\infty).
\end{align}
From \eqref{conv:U2} , the continuity of $U_2$ and the definition of $\tilde{\beta}_{\varepsilon,n}$, 
we obtain that
\begin{align*}
0\ge\frac{1}{n}\left(U_{\varepsilon,n,v}^{(2)}(\tilde{\beta}_{\varepsilon_m',n_m'}(\omega)|\tilde{\alpha}_{\varepsilon_m',n_m'}^{(1)}(\omega))-U_{\varepsilon,n,v}^{(2)}(\beta_0|\tilde{\alpha}_{\varepsilon_m',n_m'}^{(1)}(\omega))\right)(\omega)\to U_2(\beta_\infty,\beta_0)\ge0.
\end{align*}
By [{\bf A3}] and the proof of Lemma 17 in Genon-Catalot and Jacod \cite{genon_1993}, we have $\beta_\infty=\beta_0$, and \eqref{conv:beta} means that 
$\tilde{\beta}_{\varepsilon,n}\overset{P}{\to}\beta_0$.

{\bf 2nd step.} Next we show the consistency of $\tilde{\alpha}_{\varepsilon,n}$. By the definition of the contrast function $U_{\varepsilon,n,v}^{(3)}(\alpha|\bar{\beta})$, we have
\fontsize{7.9pt}{0cm}\selectfont
\begin{align}
&\varepsilon^2\left(U_{\varepsilon,n,v}^{(3)}(\alpha|\tilde{\beta}_{\varepsilon,n})-U_{\varepsilon,n,v}^{(3)}(\alpha_0|\tilde{\beta}_{\varepsilon,n})\right)\notag\\
&=2n\sum_{k=1}^{n}{\left(P_{v,k}(\alpha)-P_{v,k}(\alpha_0)\right)^\top [\sigma\sigma^\top]^{-1}(X_{t_{k-1}^n},\tilde{\beta}_{\varepsilon,n})}P_{v,k}(\alpha_0)\notag\\
&\quad+n\sum_{k=1}^{n}{\left(P_{v,k}(\alpha)-P_{v,k}(\alpha_0)\right)^\top [\sigma\sigma^\top]^{-1}(X_{t_{k-1}^n},\beta_0)}\left(P_{v,k}(\alpha)-P_{v,k}(\alpha_0)\right)\notag\\
&\quad+n\sum_{k=1}^{n}{\left(P_{v,k}(\alpha)-P_{v,k}(\alpha_0)\right)^\top \left([\sigma\sigma^\top]^{-1}(X_{t_{k-1}^n},\tilde{\beta}_{\varepsilon,n})-[\sigma\sigma^\top]^{-1}(X_{t_{k-1}^n},\beta_0)\right)\left(P_{v,k}(\alpha)-P_{v,k}(\alpha_0)\right)}\notag\\
&=2\sum_{k=1}^{n}{\left(b(X_{t_{k-1}^n},\alpha_0)-b(X_{t_{k-1}^n},\alpha)+R_d(n^{-1},X_{t_{k-1}^n})\right)^\top [\sigma\sigma^\top]^{-1}(X_{t_{k-1}^n},\tilde{\beta}_{\varepsilon,n})}P_{v,k}(\alpha_0)\label{U3:1}\\
&\quad+\frac{1}{n}\sum_{k=1}^{n}{\left(b(X_{t_{k-1}^n},\alpha)-b(X_{t_{k-1}^n},\alpha_0)\right)^\top [\sigma\sigma^\top]^{-1}(X_{t_{k-1}^n},\beta_0)}\left(b(X_{t_{k-1}^n},\alpha)-b(X_{t_{k-1}^n},\alpha_0)\right)\label{U3:2}\\
&\quad+\frac{1}{n}\sum_{k=1}^{n}{\left(b(X_{t_{k-1}^n},\alpha)-b(X_{t_{k-1}^n},\alpha_0)\right)^\top \left([\sigma\sigma^\top]^{-1}(X_{t_{k-1}^n},\tilde{\beta}_{\varepsilon,n})-[\sigma\sigma^\top]^{-1}(X_{t_{k-1}^n},\beta_0)\right)\left(b(X_{t_{k-1}^n},\alpha)-b(X_{t_{k-1}^n},\alpha_0)\right)}\label{U3:3}\\
&\quad+\frac{1}{n}\sum_{k=1}^{n}{R(n^{-1},X_{t_{k-1}^n})}.\notag
\end{align}
\fontsize{10pt}{0cm}\selectfont
It follows from Lemma 4-(2) in Gloter and S\o rensen \cite{Gloter_2009} for \eqref{U3:1}, 
and the consistency of $\tilde{\beta}_{\varepsilon,n}$ for \eqref{U3:3} 
that the two terms converge to 0 in probability uniformly in   $\alpha$.
Moreover, from Lemma 4-(1) in Gloter and S\o rensen \cite{Gloter_2009}, 
 \eqref{U3:2} converges to $U_3(\alpha, \theta_0)$ in probability uniformly in  $\alpha$, where
\begin{align}
U_3(\alpha,\theta_0)=\int_{0}^{1}{\left(b(X_{s}^0,\alpha)-b(X_{s}^0,\alpha_0)\right)^\top [\sigma\sigma^\top]^{-1}(X_{s}^0,\beta_0)\left(b(X_{s}^0,\alpha)-b(X_{s}^0,\alpha_0)\right)}ds.
\end{align}
Therefore, we have
\begin{align}
\varepsilon^2\left(U_{\varepsilon,n,v}^{(3)}(\alpha|\tilde{\beta}_{\varepsilon,n})-U_{\varepsilon,n,v}^{(3)}(\alpha_0|\tilde{\beta}_{\varepsilon,n})\right)\overset{P}{\to}U_3(\alpha,\theta_0)\quad\text{uniformly in }\alpha.
\end{align}
In an analogous manner to the proof of the consistency for $\tilde{\alpha}_{\varepsilon,n}^{(1)}$ and $\tilde{\beta}_{\varepsilon,n}$, we have $\tilde{\alpha}_{\varepsilon,n}\overset{P}{\to}\alpha_0$.

{\bf 3rd step.} We prove the asymptotic normality for $\tilde{\theta}_{\varepsilon,n}$. From Taylor's theorem, we have the following expansions:
\begin{align*}
-\varepsilon\partial_{{\alpha}}U_{\varepsilon,n,v}^{(3)}(\alpha_0|\tilde{\beta}_{\varepsilon,n})&=\left(\varepsilon^2\int_{0}^{1}{\partial_{{\alpha}}}^2U_{\varepsilon,n,v}^{(3)}(\alpha_0+u(\tilde{\alpha}_{\varepsilon,n}-\alpha_0)|\tilde{\beta}_{\varepsilon,n})du\right)\varepsilon^{-1}(\tilde{\alpha}_{\varepsilon,n}-\alpha_0), \\
-\frac{1}{\sqrt{n}}\partial_{{\beta}}U_{\varepsilon,n,v}^{(2)}(\beta_0|\tilde{\alpha}_{\varepsilon,n}^{(1)})&=
\left(\frac{1}{n}\int_{0}^{1}{\partial_{{\beta}}}^2U_{\varepsilon,n,v}^{(2)}(\beta_0+u(\tilde{\beta}_{\varepsilon,n}-\beta_0)|\tilde{\alpha}_{\varepsilon,n}^{(1)})du\right)\sqrt{n}(\tilde{\beta}_{\varepsilon,n}-\beta_0), \\
\varepsilon\partial_{{\alpha}}U_{\varepsilon,n,v}^{(3)}(\alpha_0|\tilde{\beta}_{\varepsilon,n})-\varepsilon\partial_{{\alpha}}U_{\varepsilon,n,v}^{(3)}(\alpha_0|\beta_0)&=
\left(\frac{\varepsilon}{\sqrt{n}}\int_{0}^{1}{\partial_{\alpha\beta}^2U_{\varepsilon,n,v}^{(3)}(\alpha_0|\beta_0+u(\tilde{\beta}_{\varepsilon,n}-\beta_0))}du\right)\sqrt{n}(\tilde{\beta}_{\varepsilon,n}-\beta_0).
\end{align*}
Using these expressions, we calculate that 
\begin{align*}
\Gamma_{\varepsilon,n}^1=C_{\varepsilon,n}^1\Lambda_{\varepsilon,n}^1,
\end{align*}
where
\begin{align*}
\Gamma_{\varepsilon,n}^1:=\begin{pmatrix}
-\varepsilon\partial_{{\alpha}}U_{\varepsilon,n,v}^{(3)}(\alpha_0|\beta_0) \\
-\frac{1}{\sqrt{n}}\partial_{{\beta}}U_{\varepsilon,n,v}^{(2)}(\beta_0|\tilde{\alpha}_{\varepsilon,n}^{(1)})
\end{pmatrix},\quad
\Lambda_{\varepsilon,n}^1:=\begin{pmatrix}
\varepsilon^{-1}(\tilde{\alpha}_{\varepsilon,n}-\alpha_0)\\
\sqrt{n}(\tilde{\beta}_{\varepsilon,n}-\beta_0)
\end{pmatrix}, 
\end{align*}
\begin{align*}
C_{\varepsilon,n}^1:=\begin{pmatrix}
\varepsilon^2\int_{0}^{1}{\partial_{{\alpha}}}^2U_{\varepsilon,n,v}^{(3)}(\alpha_0+u(\tilde{\alpha}_{\varepsilon,n}-\alpha_0)|\tilde{\beta}_{\varepsilon,n})du  & \frac{\varepsilon}{\sqrt{n}}\int_{0}^{1}{\partial_{\alpha\beta}^2U_{\varepsilon,n,v}^{(3)}(\alpha_0|\beta_0+u(\tilde{\beta}_{\varepsilon,n}-\beta_0))}du\\
0 & \frac{1}{n}\int_{0}^{1}{\partial_{{\beta}}}^2U_{\varepsilon,n,v}^{(2)}(\beta_0+u(\tilde{\beta}_{\varepsilon,n}-\beta_0)|\tilde{\alpha}_{\varepsilon,n}^{(1)})du
\end{pmatrix}.
\end{align*}
In an analogous manner to the proof of Theorem 1 in S\o rensen and Uchida \cite{Sorensen_Uchida_2003}, it is sufficient to show the following convergences:
\begin{align}
&\sup_{u\in[0,1]}\left|\varepsilon^2\partial_{{\alpha}}^2U_{\varepsilon,n,v}^{(3)}(\alpha_0+u(\tilde{\alpha}_{\varepsilon,n}-\alpha_0)|\tilde{\beta}_{\varepsilon,n})-2I_b(\theta_0)\right|\overset{P}{\to}0, \label{thm1:1}\\
&\sup_{u\in[0,1]}\left|\frac{1}{n}\partial_{{\beta}}^2U_{\varepsilon,n,v}^{(2)}(\beta_0+u(\tilde{\beta}_{\varepsilon,n}-\beta_0)|\tilde{\alpha}_{\varepsilon,n}^{(1)})-2I_\sigma(\theta_0)\right|\overset{P}{\to}0, \label{thm1:2}\\
&\sup_{u\in[0,1]}\left|\frac{\varepsilon}{\sqrt{n}}\partial_{\alpha\beta}^2U_{\varepsilon,n,v}^{(3)}(\alpha_0|\beta_0+u(\tilde{\beta}_{\varepsilon,n}-\beta_0))\right|\overset{P}{\to}0, \label{thm1:3}
\end{align}
\begin{align}
&\quad\Gamma_{\varepsilon,n}^1\overset{d}{\to} N_{p+q}(0,4I(\theta_0)). \label{thm1:4}
\end{align}

Proof of \eqref{thm1:1}. 
By a simple calculation, it holds that for $1\le l_1,l_2\le p$,
\begin{align}
&\varepsilon^2 \partial_{{\alpha}_{l_1l_2}}^2 U_{\varepsilon,n,v}^{(3)}(\alpha|\tilde{\beta}_{\varepsilon,n})\notag\\
&=
2n\sum_{k=1}^{n}{\partial_{{\alpha}_{l_1l_2}}^2 P_{v,k}^\top(\alpha)[\sigma\sigma^\top]^{-1}(X_{t_{k-1}^n},\tilde{\beta}_{\varepsilon,n})P_{v,k}(\alpha_0)}\label{C1:1}\\
&\quad+2n\sum_{k=1}^{n}{\partial_{{\alpha}_{l_1l_2}}^2 P_{v,k}^\top(\alpha)[\sigma\sigma^\top]^{-1}(X_{t_{k-1}^n},\beta_0)\left(P_{v,k}(\alpha)-P_{v,k}(\alpha_0)\right)}\label{C1:2}\\
&\quad+2n\sum_{k=1}^{n}{\partial_{{\alpha}_{l_1l_2}}^2 P_{v,k}^\top(\alpha)\left([\sigma\sigma^\top]^{-1}(X_{t_{k-1}^n},\tilde{\beta}_{\varepsilon,n})-[\sigma\sigma^\top]^{-1}(X_{t_{k-1}^n},\beta_0)\right)\left(P_{v,k}(\alpha)-P_{v,k}(\alpha_0)\right)}\label{C1:3}\\
&\quad+2n\sum_{k=1}^{n}{\partial_{{\alpha}_{l_1}} P_{v,k}^\top(\alpha)[\sigma\sigma^\top]^{-1}(X_{t_{k-1}^n},\beta_0)\partial_{{\alpha}_{l_2}} P_{v,k}(\alpha)}\label{C1:4}\\
&\quad+2n\sum_{k=1}^{n}{\partial_{{\alpha}_{l_1}} P_{v,k}^\top(\alpha)\left([\sigma\sigma^\top]^{-1}(X_{t_{k-1}^n},\tilde{\beta}_{\varepsilon,n})-[\sigma\sigma^\top]^{-1}(X_{t_{k-1}^n},\beta_0)\right)\partial_{{\alpha}_{l_2}} P_{v,k}(\alpha)}\label{C1:5}.
\end{align}
By noting that $\partial_{{\alpha}_{l_1}} P_{v,k}(\alpha)=-n^{-1}\partial_{{\alpha}_{l_1}}b(X_{t_{k-1}^n},\alpha)+R_d(n^{-2},X_{t_{k-1}^n})=R_d(n^{-1},X_{t_{k-1}^n})$ and $
P_{v,k}(\alpha)-P_{v,k}(\alpha_0)$
$=n^{-1}\left(b(X_{t_{k-1}^n},\alpha_0)-b(X_{t_{k-1}^n},\alpha)\right)+R_d(n^{-2},X_{t_{k-1}^n})=R_d(n^{-1},X_{t_{k-1}^n})$, it follows from Lemma 4-(1) in Gloter and S\o rensen \cite{Gloter_2009} that the sum of \eqref{C1:2} and \eqref{C1:4} converges to $2B_2^{l_1,l_2}(\alpha,\theta_0)$ in probability uniformly in   $\alpha$, where
\begin{align*}
B_2^{l_1,l_2}(\alpha,\theta_0)&:=\int_{0}^{1}{\left(\partial_{{\alpha}_{l_1l_2}}^2b(X_s^0,\alpha)\right)^\top [\sigma\sigma^\top]^{-1}(X_s^0,\beta_0)\left(b(X_s^0,\alpha)-b(X_s^0,\alpha_0)\right)}ds\\
&\quad+\int_{0}^{1}{\left(\partial_{{\alpha}_{l_1}}b(X_s^0,\alpha)\right)^\top [\sigma\sigma^\top]^{-1}(X_s^0,\beta_0)\left(\partial_{{\alpha}_{l_2}}b(X_s^0,\alpha)\right)}ds.
\end{align*}
On the other hand, by the consistency of $\tilde{\beta}_{\varepsilon,n}$ and Lemma 4 in Gloter and S\o rensen \cite{Gloter_2009}, 
the remaining terms \eqref{C1:1}, \eqref{C1:3} and \eqref{C1:5} converge to 0 in probability. Hence, we have
\begin{align}\label{conv:B2}
\sup_{\alpha\in\Theta_\alpha}\left|\varepsilon^2 \partial_{{\alpha}_{l_1l_2}}^2 U_{\varepsilon,n,v}^{(3)}(\alpha|\tilde{\beta}_{\varepsilon,n})-2B_2^{l_1,l_2}(\alpha,\theta_0)\right|\overset{P}{\to}0.
\end{align}
By noting that $B_2(\alpha_0,\theta_0)=I_b(\theta_0)$, it follows from the consistency of $\tilde{\alpha}_{\varepsilon,n}$ and the uniform continuity of $B_2$ that for all $\delta>0$,
\begin{align*}
&P\left(\sup_{u\in[0,1]}\left|\varepsilon^2\partial_{{\alpha}}^2U_{\varepsilon,n,v}^{(3)}(\alpha_0+u(\tilde{\alpha}_{\varepsilon,n}-\alpha_0)|\tilde{\beta}_{\varepsilon,n})-2I_b(\theta_0)\right|>\delta\right)\\
&\le P\left(\sup_{\alpha\in\Theta_\alpha}\left|\varepsilon^2\partial_{{\alpha}}^2U_{\varepsilon,n,v}^{(3)}(\alpha|\tilde{\beta}_{\varepsilon,n})-2B_2(\alpha,\theta_0)\right|>\frac{\delta}{2}\right)\\
&\quad+P\left(\sup_{u\in[0,1]}\left|2B_2(\alpha_0+u(\tilde{\alpha}_{\varepsilon,n}-\alpha_0),\theta_0)-2I(\theta_0)\right|>\frac{\delta}{2}\right)\\
&\to 0\quad(\varepsilon\to0,n\to\infty).
\end{align*}
This implies \eqref{thm1:1}.

Proof of \eqref{thm1:2}. 
We deduce that for $1\le m_1,m_2\le q$, 
\begin{align}
&\frac{1}{n}\partial_{{\beta}_{m_1m_2}}^2U_{\varepsilon,n,v}^{(2)}(\beta|\tilde{\alpha}_{\varepsilon,n}^{(1)})\notag\\
&=\frac{1}{n}\sum_{k=1}^{n}{\partial_{{\beta}_{m_1m_2}}}\log\det [\sigma\sigma^\top](X_{t_{k-1}^n},\beta)\label{C2:1}\\
&\quad+\varepsilon^{-2}\sum_{k=1}^{n}{P_{v,k}(\alpha_0)^\top \left(\partial_{{\beta}_{m_1m_2}}^2[\sigma\sigma^\top]^{-1}(X_{t_{k-1}^n},\beta)\right)P_{v,k}(\alpha_0)}\label{C2:2}\\
&\quad+2\varepsilon^{-2}\sum_{k=1}^{n}{\left(P_{v,k}(\tilde{\alpha}_{\varepsilon,n}^{(1)})-P_{v,k}(\alpha_0)\right)^\top \left(\partial_{{\beta}_{m_1m_2}}^2[\sigma\sigma^\top]^{-1}(X_{t_{k-1}^n},\beta)\right)P_{v,k}(\alpha_0)}\label{C2:3}\\
&\quad+2\varepsilon^{-2}\sum_{k=1}^{n}{\left(P_{v,k}(\tilde{\alpha}_{\varepsilon,n}^{(1)})-P_{v,k}(\alpha_0)\right)^\top \left(\partial_{{\beta}_{m_1m_2}}^2[\sigma\sigma^\top]^{-1}(X_{t_{k-1}^n},\beta)\right)\left(P_{v,k}(\tilde{\alpha}_{\varepsilon,n}^{(1)})-P_{v,k}(\alpha_0)\right)}.\label{C2:4}
\end{align}
It follows from Lemma 4 and 5 in Gloter and S\o rensen \cite{Gloter_2009} that the sum of \eqref{C2:1} and \eqref{C2:2} converges to $B_3^{r_1,r_2}(\beta,\beta_0)$ in probability uniformly in   $\beta$, where
\begin{align*}
B_3^{m_1,m_2}(\beta,\beta_0)&:=-\int_{0}^{1}{\mathrm{tr}\left[\left([\sigma\sigma^\top]^{-1}\left(\partial_{{\beta}_{m_1}}[\sigma\sigma^\top]\right)[\sigma\sigma^\top]^{-1}\left(\partial_{{\beta}_{m_2}}[\sigma\sigma^\top]\right)\right)(X_s^0,\beta)\right]}ds\\
&\quad+\int_{0}^{1}{\mathrm{tr}\left[\left([\sigma\sigma^\top]^{-1}\left(\partial_{{\beta}_{m_1m_2}}^2[\sigma\sigma^\top]\right)\right)(X_s^0,\beta)\right]}ds\\
&\quad+2\int_{0}^{1}{\mathrm{tr}\left[\left([\sigma\sigma^\top]^{-1}\left(\partial_{{\beta}_{m_1}}[\sigma\sigma^\top]\right)[\sigma\sigma^\top]^{-1}\left(\partial_{{\beta}_{m_2}}[\sigma\sigma^\top]\right)[\sigma\sigma^\top]^{-1}\right)(X_s^0,\beta)[\sigma\sigma^\top](X_s^0,\beta_0)\right]}ds\\
&\quad-\int_{0}^{1}{\mathrm{tr}\left[\left([\sigma\sigma^\top]^{-1}\left(\partial_{{\beta}_{m_1m_2}}^2[\sigma\sigma^\top]\right)[\sigma\sigma^\top]^{-1}\right)(X_s^0,\beta)[\sigma\sigma^\top](X_s^0,\beta_0)\right]}ds.
\end{align*}
We can show 
from Lemma \ref{lem:1} in this paper and Lemma 1 in Gloter and S\o rensen \cite{Gloter_2009} that 
 \eqref{C2:3} and \eqref{C2:4} converge to 0 in probability uniformly in $\beta$. 
 Therefore, it holds that
\begin{align}\label{conv:B3}
\sup_{\beta\in\Theta_\beta}\left|\frac{1}{n}\partial_{{\beta}_{m_1m_2}}^2U_{\varepsilon,n,v}^{(2)}(\beta|\tilde{\alpha}_{\varepsilon,n}^{(1)})-B_3^{m_1,m_2}(\beta,\beta_0)\right|\overset{P}{\to}0.
\end{align}
Noting that $B_3(\beta_0,\beta_0)=2I_\sigma(\beta_0)$, in an analogous manner to the proof of \eqref{thm1:1}, 
we deduce that for  all $\delta>0$, 
\begin{align*}
P\left(\sup_{u\in[0,1]}\left|\frac{1}{n}\partial_{{\beta}}^2U_{\varepsilon,n,v}^{(2)}(\beta_0+u(\tilde{\beta}_{\varepsilon,n}-\beta_0)|\tilde{\alpha}_{\varepsilon,n}^{(1)})-2I_\sigma(\beta_0)\right|>\delta\right)\to0\quad(\varepsilon\to0,n\to\infty).
\end{align*}
This implies \eqref{thm1:2}.

Proof of \eqref{thm1:3}. From Lemma 4-(2)  in Gloter and S\o rensen \cite{Gloter_2009}, 
it holds that for $1\le l\le p$ and $1\le m\le q$,  
\begin{align*}
\frac{\varepsilon}{\sqrt{n}}\partial_{\alpha_l\beta_m}^2U_{\varepsilon,n,v}^{(3)}(\alpha_0|\beta)&=
2\varepsilon^{-1}\sqrt{n}\sum_{k=1}^{n}{\partial_{{\alpha}_l}P_{v,k}(\alpha_0)^\top \partial_{{\beta}_m}[\sigma\sigma^\top]^{-1}(X_{t_{k-1}^n},\beta)P_{v,k}(\alpha_0)}\\
&=\frac{2}{\sqrt{n}}\cdot \varepsilon^{-1}\sum_{k=1}^{n}{R_d(1,X_{t_{k-1}^n})^\top P_{v,k}(\alpha_0)}\\
&\overset{P}{\to}0 \quad\text{uniformly in }\beta.
\end{align*}
In particular, we have
\begin{align*}
\sup_{u\in[0,1]}\left|\frac{\varepsilon}{\sqrt{n}}\partial_{\alpha\beta}^2U_{\varepsilon,n,v}^{(3)}(\alpha_0|\beta_0+u(\tilde{\beta}_{\varepsilon,n}-\beta_0))\right|\overset{P}{\to}0.
\end{align*} 

Proof of \eqref{thm1:4}. By using Lemma 4-(2)  in Gloter and S\o rensen \cite{Gloter_2009},  
it holds that for $1\le l\le p$, 
\begin{align}
-\varepsilon\partial_{{\alpha}_l}U_{\varepsilon,n,v}^{(3)}(\alpha_0|\beta_0)&=-2\varepsilon^{-1}n\sum_{k=1}^{n}{\partial_{{\alpha}_l}}P_{v,k}^\top(\alpha_0)[\sigma\sigma^\top]^{-1}(X_{t_{k-1}^n},\beta_0)P_{v,k}(\alpha_0)\notag\\
&=\sum_{k=1}^{n}{\xi_{k,1}^l(\theta_0)}+o_P(1),\label{thm1:alpha}
\end{align}
where
\begin{align}
\xi_{k,1}^l(\theta_0):=2\varepsilon^{-1}\partial_{{\alpha}_l}b(X_{t_{k-1}^n},\alpha_0)^\top [\sigma\sigma^\top](X_{t_{k-1}^n},\beta_0)P_{v,k}(\alpha_0).
\end{align}
On the other hand, it follows from Taylor's theorem that
\begin{align*}
-\frac{1}{\sqrt{n}}\partial_{{\beta}}U_{\varepsilon,n,v}^{(2)}(\beta_0|\tilde{\alpha}_{\varepsilon,n}^{(1)})&=-\frac{1}{\sqrt{n}}\partial_{{\beta}}U_{\varepsilon,n,v}^{(2)}(\beta_0|\alpha_0)\\
&\quad-\left(\frac{\varepsilon}{\sqrt{n}}\int_{0}^{1}{\partial_{{\alpha\beta}}^2U_{\varepsilon,n,v}^{(2)}(\beta_0|\alpha_0+u(\tilde{\alpha}_{\varepsilon,n}^{(1)}-\alpha_0))}du\right)\varepsilon^{-1}(\tilde{\alpha}_{\varepsilon,n}^{(1)}-\alpha_0).
\end{align*}
In particular, it holds from Lemma \ref{lem:1} in this paper and Lemma 4-(2)  in Gloter and S\o rensen \cite{Gloter_2009} that
\begin{align*}
\frac{\varepsilon}{\sqrt{n}}\partial_{{\alpha\beta}}^2U_{\varepsilon,n,v}^{(2)}(\beta_0|\alpha)&=2\varepsilon^{-1}\sqrt{n}\sum_{k=1}^{n}{\partial_{{\alpha}_l}P_{v,k}(\alpha)^\top \partial_{{\beta}_m}[\sigma\sigma^\top]^{-1}(X_{t_{k-1}^n},\beta_0)P_{v,k}(\alpha_0)}\\
&\quad+2\varepsilon^{-1}\sqrt{n}\sum_{k=1}^{n}{\partial_{{\alpha}_l}P_{v,k}(\alpha)^\top \partial_{{\beta}_m}[\sigma\sigma^\top]^{-1}(X_{t_{k-1}^n},\beta_0)\left(P_{v,k}(\alpha)-P_{v,k}(\alpha_0)\right)}\\
&\overset{P}{\to}0\quad\text{uniformly in }\alpha,
\end{align*}
and we have
\begin{align*}
\sup_{u\in[0,1]}\left|\frac{\varepsilon}{\sqrt{n}}\partial_{{\alpha\beta}}^2U_{\varepsilon,n,v}^{(2)}(\beta_0|\alpha_0+u(\tilde{\alpha}_{\varepsilon,n}^{(1)}-\alpha_0))\right|\overset{P}{\to}0.
\end{align*}
Therefore, by Lemma \ref{lem:1}, 
we deduce that for $1\le m\le q$, 
\begin{align}
-\frac{1}{\sqrt{n}}\partial_{{\beta}_m}U_{\varepsilon,n,v}^{(2)}(\beta_0|\tilde{\alpha}_{\varepsilon,n}^{(1)})&=-\frac{1}{\sqrt{n}}\partial_{{\beta}_m}U_{\varepsilon,n,v}^{(2)}(\beta_0|\alpha_0)+o_P(1)\notag\\
&=\sum_{k=1}^{n}{\left(\eta_{k,1}^m(\beta_0)+\eta_{k,2}^m(\theta_0)\right)}+o_P(1),\label{thm1:beta}
\end{align} 
where
\begin{align}
&\eta_{k,1}^m(\beta_0):=-n^{-\frac{1}{2}}\mathrm{tr}\left[\left([\sigma\sigma^\top]^{-1}\partial_{{\beta}_m}[\sigma\sigma^\top]\right)(X_{t_{k-1}^n},\beta_0)\right], \\
&\eta_{k,2}^m(\theta_0):=\varepsilon^{-2}n^{\frac{1}{2}}P_{v,k}^\top(\alpha_0)\left(\left([\sigma\sigma^\top]^{-1}\left(\partial_{{\beta}_m}[\sigma\sigma^\top]\right)[\sigma\sigma^\top]^{-1}\right)(X_{t_{k-1}^n},\beta_0)\right)P_{v,k}(\alpha_0).
\end{align}
In order to show \eqref{thm1:4}, by Theorems 3.2 and 3.4 in Hall and Heyde \cite{hall}, it is sufficient to show the following convergences: for $1\le l_1,l_2\le p$ and $1\le m_1,m_2\le q$,
\begin{align*}
&\sum_{k=1}^{n}{\mathbb{E}\left[\xi_{k,1}^{l_1}(\theta_0)|\mathcal{G}_{k-1}^n\right]\overset{P}{\to}0}, \\
&\sum_{k=1}^{n}{\mathbb{E}\left[\eta_{k,1}^{m_1}(\beta_0)+\eta_{k,2}^{m_1}(\theta_0)|\mathcal{G}_{k-1}^n\right]\overset{P}{\to}0}, \\
&\sum_{k=1}^{n}{\mathbb{E}\left[\xi_{k,1}^{l_1}(\theta_0)\xi_{k,1}^{l_2}(\theta_0)|\mathcal{G}_{k-1}^n\right]}-\sum_{k=1}^{n}{\mathbb{E}\left[\xi_{k,1}^{l_1}(\theta_0)|\mathcal{G}_{k-1}^n\right]\mathbb{E}\left[\xi_{k,1}^{l_2}(\theta_0)|\mathcal{G}_{k-1}^n\right]}\overset{P}{\to}4I_b^{l_1,l_2}(\theta_0), \\
&\sum_{k=1}^{n}{\mathbb{E}\left[(\eta_{k,1}^{m_1}(\beta_0)+\eta_{k,2}^{m_1}(\theta_0))(\eta_{k,1}^{m_2}(\beta_0)+\eta_{k,2}^{m_2}(\theta_0))|\mathcal{G}_{k-1}^n\right]} \\
&\quad-\sum_{k=1}^{n}{\mathbb{E}\left[\eta_{k,1}^{m_1}(\beta_0)+\eta_{k,2}^{m_1}(\theta_0)|\mathcal{G}_{k-1}^n\right]\mathbb{E}\left[\eta_{k,1}^{m_2}(\beta_0)+\eta_{k,2}^{m_2}(\theta_0)|\mathcal{G}_{k-1}^n\right]}\overset{P}{\to}4I_\sigma^{m_1,m_2}(\beta_0), \\
&\sum_{k=1}^{n}{\mathbb{E}\left[\xi_{k,1}^{l_1}(\theta_0)(\eta_{k,1}^{m_1}(\beta_0)+\eta_{k,2}^{m_1}(\theta_0))|\mathcal{G}_{k-1}^n\right]}\\
&\quad-\sum_{k=1}^{n}{\mathbb{E}\left[\xi_{k,1}^{l_1}(\theta_0)|\mathcal{G}_{k-1}^n\right]\mathbb{E}\left[\eta_{k,1}^{m_1}(\beta_0)+\eta_{k,2}^{m_1}(\theta_0)|\mathcal{G}_{k-1}^n\right]}\overset{P}{\to}0, \\
&\sum_{k=1}^{n}{\mathbb{E}\left[\left(\xi_{k,1}^{l_1}(\theta_0)\right)^4|\mathcal{G}_{k-1}^n\right]\overset{P}{\to}0}, \\
&\sum_{k=1}^{n}{\mathbb{E}\left[\left(\eta_{k,1}^{m_1}(\beta_0)+\eta_{k,2}^{m_1}(\theta_0)\right)^4|\mathcal{G}_{k-1}^n\right]\overset{P}{\to}0}.
\end{align*}
The above properties are obtained by 
Lemmas 1 and 4 in Gloter and S\o rensen \cite{Gloter_2009}. 
We omit a detailed proof. Therefore, we have $\Lambda_{\varepsilon,n}^1\overset{d}{\to} N_d(0,I(\theta_0)^{-1})$, which completes the proof.
\end{proof}
\begin{proof}[\bf Proof of Lemma \ref{lem:2}]
First, we prove that $\hat{\alpha}_{\varepsilon,n}^{(l)}\overset{P}{\to}\alpha_0$ for $l=1,\ldots, v$. 
When $l=1$, a simple computation shows that 
\begin{align*}
\varepsilon^2\left(V_{\varepsilon,n,v}^{(1)}(\alpha)-V_{\varepsilon,n,v}^{(1)}(\alpha_0)\right)&=\frac{1}{n}\sum_{k=1}^{n}{\left(b(X_{t_{k-1}^n},\alpha_0)-b(X_{t_{k-1}^n},\alpha)\right)^\top\left(b(X_{t_{k-1}^n},\alpha_0)-b(X_{t_{k-1}^n},\alpha)\right)}\\
&\quad+2\sum_{k=1}^{n}{\left(b(X_{t_{k-1}^n},\alpha_0)-b(X_{t_{k-1}^n},\alpha)\right)^\top P_{1,k}(\alpha_0)}.
\end{align*}
By Lemma 2 in S\o rensen and Uchida \cite{Sorensen_Uchida_2003}, the first term of the right hand side converges in probability to $U_1(\alpha,\alpha_0)$ defined by \eqref{U1-small} and the second term converges to 0 in probability. Moreover, these convergences hold uniformly in $\alpha$, and we have
\begin{align}
\varepsilon^2\left(V_{\varepsilon,n,v}^{(1)}(\alpha)-V_{\varepsilon,n,v}^{(1)}(\alpha_0)\right)\overset{P}{\to}U_1(\alpha,\alpha_0)\quad \text{uniformly in }\alpha.
\end{align}
Therefore, in the same manner as the proof of the consistency of $\tilde{\alpha}_{\varepsilon,n}^{(1)}$ in Lemma \ref{lem:1}, we can show $\hat{\alpha}_{\varepsilon,n}^{(1)}\overset{P}{\to}\alpha_0$.
When $l\ge2$, assume that $\hat{\alpha}_{\varepsilon,n}^{(l-1)}$ is consistent. 
Noting that $Q_{l,k}(\bar{\alpha})=R_d(n^{-2},X_{t_{k-1}^n})$, we calculate that
\begin{align*}
\varepsilon^2\left(V_{\varepsilon,n,v}^{(l)}(\alpha|\hat{\alpha}_{\varepsilon,n}^{(l-1)})-V_{\varepsilon,n,v}^{(l)}(\alpha_0|\hat{\alpha}_{\varepsilon,n}^{(l-1)})\right)&=n\sum_{k=1}^{n}{\left(P_{1,k}(\alpha)-P_{1,k}(\alpha_0)\right)^\top\left(P_{1,k}(\alpha)-P_{1,k}(\alpha_0)\right)}\\
&\quad+2n\sum_{k=1}^{n}{\left(P_{1,k}(\alpha)-P_{1,k}(\alpha_0)\right)^\top\left(P_{1,k}(\alpha_0)-Q_{l,k}(\hat{\alpha}_{\varepsilon,n}^{(l-1)})\right)}\\
&=\frac{1}{n}\sum_{k=1}^{n}{\left(b(X_{t_{k-1}^n},\alpha_0)-b(X_{t_{k-1}^n},\alpha)\right)^\top\left(b(X_{t_{k-1}^n},\alpha_0)-b(X_{t_{k-1}^n},\alpha)\right)}\\
&\quad+2\sum_{k=1}^{n}{\left(b(X_{t_{k-1}^n},\alpha_0)-b(X_{t_{k-1}^n},\alpha)\right)^\top P_{1,k}(\alpha_0)}\\
&\quad+\frac{1}{n}\sum_{k=1}^{n}{R(n^{-1},X_{t_{k-1}^n})}.
\end{align*}
In a similar way to the proof of (\ref{conv:B3}),
we have
\begin{align}
\varepsilon^2\left(V_{\varepsilon,n,v}^{(l)}(\alpha|\hat{\alpha}_{\varepsilon,n}^{(l-1)})-V_{\varepsilon,n,v}^{(l)}(\alpha_0|\hat{\alpha}_{\varepsilon,n}^{(l-1)})\right)\overset{P}{\to}U_1(\alpha,\alpha_0)\quad \text{uniformly in }\alpha.
\end{align}
Hence, it follows from [{\bf A3}] and the consistency of $\hat{\alpha}_{\varepsilon,n}^{(l-1)}$ that $\hat{\alpha}_{\varepsilon,n}^{(l)}\overset{P}{\to}\alpha_0$.

Second, we show $\varepsilon^{-\frac{1}{v}}(\hat{\alpha}_{\varepsilon,n}^{(1)}-\alpha_0)\overset{P}{\to}0$. In the case of $v=1$, this statement is the same as 
that of Lemma \ref{lem:1}. Therefore, we assume $v\ge2$. It follows from Taylor's theorem that
\begin{align*}
-\varepsilon^{2-\frac{1}{v}}\partial_{{\alpha}}V_{\varepsilon,n,v}^{(1)}(\alpha_0)&=\left(\varepsilon^2\int_{0}^{1}{\partial_{{\alpha}}^2V_{\varepsilon,n,v}^{(1)}(\alpha_0+u(\hat{\alpha}_{\varepsilon,n}^{(1)}-\alpha_0))}du\right)\varepsilon^{-\frac{1}{v}}(\hat{\alpha}_{\varepsilon,n}^{(1)}-\alpha_0).
\end{align*}
In order to prove $\varepsilon^{-\frac{1}{v}}(\hat{\alpha}_{\varepsilon,n}^{(1)}-\alpha_0)\overset{P}{\to}0$, by [{\bf A4}]-(i), it is sufficient to show the following properties:
\begin{align}
&-\varepsilon^{2-\frac{1}{v}}\partial_{{\alpha}}V_{\varepsilon,n,v}^{(1)}(\alpha_0)\overset{P}{\to}0, \label{lem2:1-1}\\
&\sup_{u\in[0,1]}\left|\varepsilon^2\partial_{{\alpha}}^2V_{\varepsilon,n,v}^{(1)}(\alpha_0+u(\hat{\alpha}_{\varepsilon,n}^{(1)}-\alpha_0))-2J_b(\alpha_0)\right|\overset{P}{\to}0\label{lem2:1-2}.
\end{align}

Proof of \eqref{lem2:1-1}. By the definition of the contrast function $V_{\varepsilon,n,v}^{(1)}(\alpha)$, we have
\begin{align*}
-\varepsilon^{2-\frac{1}{v}}\partial_{{\alpha}}V_{\varepsilon,n,v}^{(1)}(\alpha_0)&=\sum_{k=1}^{n}{\psi_{k}^{(1)}(\alpha_0)},
\end{align*}
where
\begin{align*}
\psi_{k}^{(1)}(\alpha_0)&=2\varepsilon^{-\frac{1}{v}}{\partial_{{\alpha}}b(X_{t_{k-1}^n},\alpha_0)^\top P_{1,k}(\alpha_0)}.
\end{align*}
From Lemma 9 in Genon-Catalot and Jacod \cite{genon_1993}, it is sufficient to show the following two convergences.
\begin{align}
&\sum_{k=1}^{n}{\mathbb{E}\left[\psi^{(1)}_k(\alpha_0)|\mathcal{G}_{k-1}^n\right]}\overset{P}{\to}0, \label{uchida-1}\\
&\sum_{k=1}^{n}{\mathbb{E}\left[\left(\psi^{(1)}_k(\alpha_0)\right)^2|\mathcal{G}_{k-1}^n\right]}\overset{P}{\to}0. \label{uchida-2}
\end{align}
By using Lemmas 1 and 4 in Gloter and S\o rensen \cite{Gloter_2009}, 
the two terms on the left hand side in \eqref{uchida-1} and \eqref{uchida-2}
are
$O_P(\varepsilon^{2-\frac{1}{v}})$ and $O_P(\varepsilon^{2(1-\frac{1}{v})})$, respectively. Therefore, it follows from $v\ge 2$ that the two properties hold.

Proof of \eqref{lem2:1-2}. By Lemma 2 in S\o rensen and Uchida \cite{Sorensen_Uchida_2003}, one deduces that
\begin{align*}
\varepsilon^2\partial_{{\alpha}}^2V_{\varepsilon,n,v}^{(1)}(\alpha)&=-\frac{2}{n}\sum_{k=1}^{n}{\left(\partial_{{\alpha}}^2b(X_{t_{k-1}^n},\alpha)\right)^\top \left(b(X_{t_{k-1}^n},\alpha_0)-b(X_{t_{k-1}^n},\alpha)\right)}\\
&\quad+\frac{2}{n}\sum_{k=1}^{n}{\left(\partial_{{\alpha}}b(X_{t_{k-1}^n},\alpha)\right)^\top\left(\partial_{{\alpha}}b(X_{t_{k-1}^n},\alpha)\right)}\\
&\quad-2\sum_{k=1}^{n}{\left(\partial_{{\alpha}}^2b(X_{t_{k-1}^n},\alpha)\right)^\top P_{1,k}(\alpha_0)}\\
&\overset{P}{\to} 2B_1(\alpha,\alpha_0)\quad\text{uniformly in }\alpha.
\end{align*}
By noting  that $B_1(\alpha_0,\alpha_0)=J_b(\alpha_0)$, it follows from the consistency of $\hat{\alpha}_{\varepsilon,n}^{(1)}$ and the uniform continuity of $B_1$ that for all $\delta>0$,
\begin{align*}
&P\left(\sup_{u\in[0,1]}\left|\varepsilon^2\partial_{{\alpha}}^2V_{\varepsilon,n,v}^{(1)}(\alpha_0+u(\hat{\alpha}_{\varepsilon,n}^{(1)}-\alpha_0))-2J_b(\alpha_0)\right|>\delta\right)\\
&\le P\left(\sup_{\alpha\in\Theta_\alpha}\left|\varepsilon^2\partial_{{\alpha}}^2V_{\varepsilon,n,v}^{(1)}(\alpha)-2B_1(\alpha,\alpha_0)\right|>\frac{\delta}{2}\right)\\
&\quad+P\left(\sup_{u\in[0,1]}\left|2B_1(\alpha_0+u(\hat{\alpha}_{\varepsilon,n}^{(1)}-\alpha_0),\alpha_0)-2J_b(\alpha_0)\right|>\frac{\delta}{2}\right)\\
&\to 0\quad(\varepsilon\to0,n\to\infty).
\end{align*}
This implies \eqref{lem2:1-2}.

Third, we assume $\varepsilon^{-\frac{l-1}{v}}(\hat{\alpha}_{\varepsilon,n}^{(l-1)}-\alpha_0)\overset{P}{\to}0$ and prove $\varepsilon^{-\frac{l}{v}}(\hat{\alpha}_{\varepsilon,n}^{(l)}-\alpha_0)\overset{P}{\to}0$ for $2\le l\le v-1$. From Taylor's theorem, we deduce that
\begin{align*}
-\varepsilon^{2-\frac{l}{v}}\partial_{{\alpha}}V_{\varepsilon,n,v}^{(l)}(\alpha_0|\hat{\alpha}_{\varepsilon,n}^{(l-1)})&=\left(\varepsilon^2\int_{0}^{1}{\partial_{{\alpha}}^2V_{\varepsilon,n,v}^{(l)}(\alpha_0+u(\hat{\alpha}_{\varepsilon,n}^{(l)}-\alpha_0)|\hat{\alpha}_{\varepsilon,n}^{(l-1)})}du\right)\varepsilon^{-\frac{l}{v}}(\hat{\alpha}_{\varepsilon,n}^{(l)}-\alpha_0), \\
\varepsilon^{2-\frac{l}{v}}\partial_{{\alpha}}V_{\varepsilon,n,v}^{(l)}(\alpha_0|\hat{\alpha}_{\varepsilon,n}^{(l-1)})&=\varepsilon^{2-\frac{l}{v}}\partial_{{\alpha}}V_{\varepsilon,n,v}^{(l)}(\alpha_0|\alpha_0)\\
&\quad+\left(\varepsilon^{2-\frac{1}{v}}\int_{0}^{1}{\partial_{{\alpha\bar{\alpha}}}^2V_{\varepsilon,n,v}^{(l)}(\alpha_0|\alpha_0+u(\hat{\alpha}_{\varepsilon,n}^{(l-1)}-\alpha_0))}du\right)\varepsilon^{-\frac{l-1}{v}}(\hat{\alpha}_{\varepsilon,n}^{(l-1)}-\alpha_0).
\end{align*}
Hence, one deduces that
\begin{align}\label{taylor:1}
&-\varepsilon^{2-\frac{l}{v}}\partial_{{\alpha}}V_{\varepsilon,n,v}^{(l)}(\alpha_0|\alpha_0)-\left(\varepsilon^{2-\frac{1}{v}}\int_{0}^{1}{\partial_{{\alpha\bar{\alpha}}}^2V_{\varepsilon,n,v}^{(l)}(\alpha_0|\alpha_0+u(\hat{\alpha}_{\varepsilon,n}^{(l-1)}-\alpha_0))}du\right)\varepsilon^{-\frac{l-1}{v}}(\hat{\alpha}_{\varepsilon,n}^{(l-1)}-\alpha_0)\notag\\
&=\left(\varepsilon^2\int_{0}^{1}{\partial_{{\alpha}}^2V_{\varepsilon,n,v}^{(l)}(\alpha_0+u(\hat{\alpha}_{\varepsilon,n}^{(l)}-\alpha_0)|\hat{\alpha}_{\varepsilon,n}^{(l-1)})}du\right)\varepsilon^{-\frac{l}{v}}(\hat{\alpha}_{\varepsilon,n}^{(l)}-\alpha_0). 
\end{align}
In an analogous manner to the proof of $\varepsilon^{-\frac{1}{v}}(\hat{\alpha}_{\varepsilon,n}^{(1)}-\alpha_0)\overset{P}{\to}0$, it is sufficient to show the following properties:
\begin{align}
&-\varepsilon^{2-\frac{l}{v}}\partial_{{\alpha}}V_{\varepsilon,n,v}^{(l)}(\alpha_0|\alpha_0)\overset{P}{\to}0, \label{lem2:2-1}\\
&\sup_{u\in[0,1]}\left|\varepsilon^2\partial_{{\alpha}}^2V_{\varepsilon,n,v}^{(l)}(\alpha_0+u(\hat{\alpha}_{\varepsilon,n}^{(l)}-\alpha_0)|\hat{\alpha}_{\varepsilon,n}^{(l-1)})-2J_b(\alpha_0)\right|\overset{P}{\to}0, \label{lem2:2-2}\\
&\sup_{u\in[0,1]}\left|\varepsilon^{2-\frac{1}{v}}\partial_{{\alpha\bar{\alpha}}}^2V_{\varepsilon,n,v}^{(l)}(\alpha_0|\alpha_0+u(\hat{\alpha}_{\varepsilon,n}^{(l-1)}-\alpha_0))\right|\overset{P}{\to}0\label{lem2:2-3}.
\end{align}

Proof of \eqref{lem2:2-1}. By the definition of the contrast function $V_{\varepsilon,n,v}^{(l)}(\alpha|\bar{\alpha})$, we have
\begin{align}\label{lem2:ex}
-\varepsilon^{2-\frac{l}{v}}\partial_{{\alpha}}V_{\varepsilon,n,v}^{(l)}(\alpha_0|\alpha_0)&=\sum_{k=1}^{n}{\psi_{k}^{(l)}(\alpha_0)}+\varepsilon^{2-\frac{l}{v}}\cdot \frac{1}{n}\sum_{k=1}^{n}{R_d(n^{-1},X_{t_{k-1}^n})},
\end{align}
where
\begin{align*}
\psi_{k}^{(l)}(\alpha_0)&=2\varepsilon^{-\frac{l}{v}}{\partial_{{\alpha}}b(X_{t_{k-1}^n},\alpha_0)^\top P_{1,k}(\alpha_0)}.
\end{align*}
Since the second term of the right hand side of \eqref{lem2:ex} converges to 0 in probability, it is sufficient to show the following two properties:
\begin{align}
&\sum_{k=1}^{n}{\mathbb{E}\left[\psi^{(l)}_k(\alpha_0)|\mathcal{G}_{k-1}^n\right]}\overset{P}{\to}0, \label{uchida-3} \\
&\sum_{k=1}^{n}{\mathbb{E}\left[\left(\psi^{(l)}_k(\alpha_0)\right)^2|\mathcal{G}_{k-1}^n\right]}\overset{P}{\to}0. \label{uchida-4}
\end{align}
In an analogous manner to the case of $\psi_k^{(1)}$, 
the two terms on the left hand side in \eqref{uchida-3} and \eqref{uchida-4} 
are 
$O_P(\varepsilon^{2-\frac{l}{v}})$ and $O_P(\varepsilon^{2(1-\frac{l}{v})})$, respectively. Therefore, it follows from $l<v$ that the two properties hold.

Proof of \eqref{lem2:2-2}. By Lemma 2 in S\o rensen and Uchida \cite{Sorensen_Uchida_2003}, one deduces that
\begin{align*}
\varepsilon^2\partial_{{\alpha}}^2V_{\varepsilon,n,v}^{(l)}(\alpha|\hat{\alpha}_{\varepsilon,n}^{(l-1)})&=-\frac{2}{n}\sum_{k=1}^{n}{\left(\partial_{{\alpha}}^2b(X_{t_{k-1}^n},\alpha)\right)^\top \left(b(X_{t_{k-1}^n},\alpha_0)-b(X_{t_{k-1}^n},\alpha)\right)}\\
&\quad+\frac{2}{n}\sum_{k=1}^{n}{\left(\partial_{{\alpha}}b(X_{t_{k-1}^n},\alpha)\right)^\top\left(\partial_{{\alpha}}b(X_{t_{k-1}^n},\alpha)\right)}\\
&\quad-2\sum_{k=1}^{n}{\left(\partial_{{\alpha}}^2b(X_{t_{k-1}^n},\alpha)\right)^\top P_{1,k}(\alpha_0)}\\
&\quad-2\sum_{k=1}^{n}{\left(\partial_{{\alpha}}^2b(X_{t_{k-1}^n},\alpha)\right)^\top Q_{l,k}(\hat{\alpha}_{\varepsilon,n}^{(l-1)})}\\
&\overset{P}{\to} 2B_1(\alpha,\alpha_0)\quad\text{uniformly in }\alpha.
\end{align*}
Hence, it follows from the consistency of $\hat{\alpha}_{\varepsilon,n}^{(l)}$ and the uniform continuity of $B_1$ that for all $\delta>0$,
\begin{align*}
&P\left(\sup_{u\in[0,1]}\left|\varepsilon^2\partial_{{\alpha}}^2V_{\varepsilon,n,v}^{(l)}(\alpha_0+u(\hat{\alpha}_{\varepsilon,n}^{(l)}-\alpha_0)|\hat{\alpha}_{\varepsilon,n}^{(l-1)})-2J_b(\alpha_0)\right|>\delta\right)\\
&\le P\left(\sup_{\alpha\in\Theta_\alpha}\left|\varepsilon^2\partial_{{\alpha}}^2V_{\varepsilon,n,v}^{(l)}(\alpha|\hat{\alpha}_{\varepsilon,n}^{(l-1)})-2B_1(\alpha,\alpha_0)\right|>\frac{\delta}{2}\right)\\
&\quad+P\left(\sup_{u\in[0,1]}\left|2B_1(\alpha_0+u(\hat{\alpha}_{\varepsilon,n}^{(l)}-\alpha_0),\alpha_0)-2J_b(\alpha_0)\right|>\frac{\delta}{2}\right)\\
&\to 0\quad(\varepsilon\to0,n\to\infty).
\end{align*}
This implies \eqref{lem2:2-2}.

Proof of \eqref{lem2:2-3}. By Lemma 2 in S\o rensen and Uchida \cite{Sorensen_Uchida_2003} and [{\bf B}], 
it holds that for $1\le l_1,l_2\le p$, 
\begin{align*}
\varepsilon^{2-\frac{1}{v}}\partial_{{\alpha_{l_1}\bar{\alpha}_{l_2}}}^2V_{\varepsilon,n,v}^{(l)}(\alpha|\bar{\alpha})
&=2\varepsilon^{-\frac{1}{v}}\sum_{k=1}^{n}{\partial_{{\alpha}_{l_1}}b(X_{t_{k-1}^n},\alpha)^\top\partial_{{\bar{\alpha}_{l_2}}}Q_{l,k}(\bar{\alpha})}\\
&=2n^{-(1-\frac{\rho}{v})}\cdot(\varepsilon n^\rho)^{-\frac{1}{v}}\cdot\frac{1}{n}\sum_{k=1}^{n}{R(1,X_{t_{k-1}^n})}\\
&\overset{P}{\to}0\quad\text{uniformly in }(\alpha,\bar{\alpha}).
\end{align*}
Therefore, we have \eqref{lem2:2-3}. 

Next, we prove $\varepsilon^{-1}(\hat{\alpha}_{\varepsilon,n}^{(v)}-\alpha_0)=O_P(1)$. 
In the same manner as the derivation of \eqref{taylor:1}, it holds from Taylor's theorem that
\begin{align*}
&-\varepsilon\partial_{{\alpha}}V_{\varepsilon,n,v}^{(v)}(\alpha_0|\alpha_0)-\left(\varepsilon^{2-\frac{1}{v}}\int_{0}^{1}{\partial_{{\alpha\bar{\alpha}}}^2V_{\varepsilon,n,v}^{(v)}(\alpha_0|\alpha_0+u(\hat{\alpha}_{\varepsilon,n}^{(v-1)}-\alpha_0))}du\right)\varepsilon^{-\frac{v-1}{v}}(\hat{\alpha}_{\varepsilon,n}^{(v-1)}-\alpha_0)\notag\\
&=\left(\varepsilon^2\int_{0}^{1}{\partial_{{\alpha}}^2V_{\varepsilon,n,v}^{(v)}(\alpha_0+u(\hat{\alpha}_{\varepsilon,n}^{(v)}-\alpha_0)|\hat{\alpha}_{\varepsilon,n}^{(v-1)})}du\right)\varepsilon^{-1}(\hat{\alpha}_{\varepsilon,n}^{(v)}-\alpha_0).
\end{align*}
In order to prove $\varepsilon^{-1}(\hat{\alpha}_{\varepsilon,n}^{(v)}-\alpha_0)=O_P(1)$, it is sufficient to show the following properties:
\begin{align}
&-\varepsilon\partial_{{\alpha}}V_{\varepsilon,n,v}^{(v)}(\alpha_0|\alpha_0)=O_P(1), \label{lem2:3-1}\\
&\sup_{u\in[0,1]}\left|\varepsilon^2\partial_{{\alpha}}^2V_{\varepsilon,n,v}^{(v)}(\alpha_0+u(\hat{\alpha}_{\varepsilon,n}^{(v)}-\alpha_0)|\hat{\alpha}_{\varepsilon,n}^{(v-1)})-2J_b(\alpha_0)\right|\overset{P}{\to}0, \label{lem2:3-2}\\
&\sup_{u\in[0,1]}\left|\varepsilon^{2-\frac{1}{v}}\partial_{{\alpha\bar{\alpha}}}^2V_{\varepsilon,n,v}^{(v)}(\alpha_0|\alpha_0+u(\hat{\alpha}_{\varepsilon,n}^{(v-1)}-\alpha_0))\right|\overset{P}{\to}0. \label{lem2:3-3}
\end{align}
In analogous manners to the proofs of \eqref{lem2:2-2} and \eqref{lem2:2-3},  we can show \eqref{lem2:3-2} and \eqref{lem2:3-3}. For \eqref{lem2:3-1}, it follows from Lemma 4-(2) in Gloter and S\o rensen \cite{Gloter_2009} that
\begin{align*}
-\varepsilon\partial_{{\alpha}}V_{\varepsilon,n,v}^{(v)}(\alpha_0|\alpha_0)&=2\varepsilon^{-1}\sum_{k=1}^{n}{\partial_{{\alpha}}b(X_{t_{k-1}^n},\alpha_0)^\top \left(P_{1,k}(\alpha_0)-Q_{v,k}(\alpha_0)\right)}\\
&=2\varepsilon^{-1}\sum_{k=1}^{n}{\partial_{{\alpha}}b(X_{t_{k-1}^n},\alpha_0)^\top P_{v,k}(\alpha_0)}\\
&=O_P(1).
\end{align*}
Finally, we show the asymptotic normality of $\hat{\alpha}_{\varepsilon,n}^{(v)}$ 
in the same manner as the proof of Lemma \ref{lem:1}.  
Noting that $-\varepsilon\partial_{{\alpha}_{l}}V_{\varepsilon,n,v}^{(v)}(\alpha_0|\alpha_0)=\sum_{k=1}^{n}{\zeta_{k,1}^{l}}(\alpha_0)$ for $1\le l\le p$, we can show $-\varepsilon\partial_{{\alpha}}V_{\varepsilon,n,v}^{(v)}(\alpha_0|\alpha_0)\overset{d}{\to}N_p(0,4K_b(\theta_0))$. From \eqref{lem2:3-2} and this convergence, we complete the proof.
\end{proof}
\begin{proof}[\bf Proof of Theorem \ref{thm:2}]
We prove this theorem in the same order as the proof of Theorem \ref{thm:1}.

{\bf 1st step.} We   show $\hat{\beta}_{\varepsilon,n}\overset{P}{\to}\beta_0$. 
Noting that $V_{\varepsilon,n,v}^{(v+1)}(\beta|\bar{\alpha})=U_{\varepsilon,n,v}^{(2)}(\beta|\bar{\alpha})$, we can utilize the proof of Theorem \ref{thm:1}. In particular, by replacing $U_{\varepsilon,n,v}^{(2)}$ and $(\tilde{\alpha}_{\varepsilon,n}^{(1)},\tilde{\beta}_{\varepsilon,n})$ with $V_{\varepsilon,n,v}^{(v+1)}$ and $(\hat{\alpha}_{\varepsilon,n}^{(v)},\hat{\beta}_{\varepsilon,n})$ 
in the 1st step of the proof of Theorem \ref{thm:1}, we obtain the result. 

{\bf 2nd step.} We prove $\hat{\alpha}_{\varepsilon,n}\overset{P}{\to}\alpha_0$. It follows from 
Lemma 4 in Gloter and S\o rensen  \cite{Gloter_2009} and the consistency of $\hat{\beta}_{\varepsilon,n}$ that
\small
\begin{align*}
&\varepsilon^2\left(V_{\varepsilon,n,v}^{(v+2)}(\alpha|\hat{\alpha}_{\varepsilon,n}^{(v)},\hat{\beta}_{\varepsilon,n})-V_{\varepsilon,n,v}^{(v+2)}(\alpha_0|\hat{\alpha}_{\varepsilon,n}^{(v)},\hat{\beta}_{\varepsilon,n})\right)\\
&=2\sum_{k=1}^{n}{\left(b(X_{t_{k-1}^n},\alpha_0)-b(X_{t_{k-1}^n},\alpha)\right)^\top [\sigma\sigma^\top]^{-1}(X_{t_{k-1}^n},\hat{\beta}_{\varepsilon,n})}P_{1,k}(\alpha_0)\\
&\quad-2\sum_{k=1}^{n}{\left(b(X_{t_{k-1}^n},\alpha_0)-b(X_{t_{k-1}^n},\alpha)\right)^\top [\sigma\sigma^\top]^{-1}(X_{t_{k-1}^n},\hat{\beta}_{\varepsilon,n})}Q_{v,k}(\hat{\alpha}_{\varepsilon,n}^{(v)})\\
&\quad+\frac{1}{n}\sum_{k=1}^{n}{\left(b(X_{t_{k-1}^n},\alpha)-b(X_{t_{k-1}^n},\alpha_0)\right)^\top [\sigma\sigma^\top]^{-1}(X_{t_{k-1}^n},\beta_0)}\left(b(X_{t_{k-1}^n},\alpha)-b(X_{t_{k-1}^n},\alpha_0)\right)\\
&\quad+\frac{1}{n}\sum_{k=1}^{n}{\left(b(X_{t_{k-1}^n},\alpha)-b(X_{t_{k-1}^n},\alpha_0)\right)^\top \left([\sigma\sigma^\top]^{-1}(X_{t_{k-1}^n},\hat{\beta}_{\varepsilon,n})-[\sigma\sigma^\top]^{-1}(X_{t_{k-1}^n},\beta_0)\right)\left(b(X_{t_{k-1}^n},\alpha)-b(X_{t_{k-1}^n},\alpha_0)\right)}\\
&\overset{P}{\to}U_3(\alpha,\theta_0)\quad\text{uniformly in }\alpha.
\end{align*}
\normalsize
Therefore, in an analogous manner to the proof of the consistency for $\tilde{\alpha}_{\varepsilon,n}$, we have $\hat{\alpha}_{\varepsilon,n}\overset{P}{\to}\alpha_0$.

{\bf 3rd step.} We prove the asymptotic normality for $\hat{\theta}_{\varepsilon,n}$. Using Taylor's theorem,  we have the following expansions:
\begin{align*}
&-\varepsilon\partial_{{\alpha}}V_{\varepsilon,n,v}^{(v+2)}(\alpha_0|\hat{\alpha}_{\varepsilon,n}^{(v)},\hat{\beta}_{\varepsilon,n})=\left(\varepsilon^2\int_{0}^{1}{\partial_{{\alpha}}}^2V_{\varepsilon,n,v}^{(v+2)}(\alpha_0+u(\hat{\alpha}_{\varepsilon,n}-\alpha_0)|\hat{\alpha}_{\varepsilon,n}^{(1)},\hat{\beta}_{\varepsilon,n})du\right)\varepsilon^{-1}(\hat{\alpha}_{\varepsilon,n}-\alpha_0), \\
&-\frac{1}{\sqrt{n}}\partial_{{\beta}}V_{\varepsilon,n,v}^{(v+1)}(\beta_0|\hat{\alpha}_{\varepsilon,n}^{(v)})=
\left(\frac{1}{n}\int_{0}^{1}{\partial_{{\beta}}}^2V_{\varepsilon,n,v}^{(v+1)}(\beta_0+u(\hat{\beta}_{\varepsilon,n}-\beta_0)|\hat{\alpha}_{\varepsilon,n}^{(v)})du\right)\sqrt{n}(\hat{\beta}_{\varepsilon,n}-\beta_0), \\
&\varepsilon\partial_{{\alpha}}V_{\varepsilon,n,v}^{(v+2)}(\alpha_0|\hat{\alpha}_{\varepsilon,n}^{(v)},\hat{\beta}_{\varepsilon,n})\\
&\quad=\varepsilon\partial_{{\alpha}}V_{\varepsilon,n,v}^{(v+2)}(\alpha_0|\hat{\alpha}_{\varepsilon,n}^{(v)},\beta_0)+
\left(\frac{\varepsilon}{\sqrt{n}}\int_{0}^{1}{\partial_{\alpha\beta}^2V_{\varepsilon,n,v}^{(v+2)}(\alpha_0|\hat{\alpha}_{\varepsilon,n}^{(v)},\beta_0+u(\hat{\beta}_{\varepsilon,n}-\beta_0))}du\right)\sqrt{n}(\hat{\beta}_{\varepsilon,n}-\beta_0).
\end{align*}
Using these expressions, we calculate that 
\begin{align*}
\Gamma_{\varepsilon,n}^2=C_{\varepsilon,n}^2\Lambda_{\varepsilon,n}^2,
\end{align*}
where
\begin{align*}
\Gamma_{\varepsilon,n}^2:=\begin{pmatrix}
-\varepsilon\partial_{{\alpha}}V_{\varepsilon,n,v}^{(v+2)}(\alpha_0|\hat{\alpha}_{\varepsilon,n}^{(v)},\beta_0)\\
-\frac{1}{\sqrt{n}}\partial_{{\beta}}V_{\varepsilon,n,v}^{(v+1)}(\beta_0|\hat{\alpha}_{\varepsilon,n}^{(v)})
\end{pmatrix},\quad
\Lambda_{\varepsilon,n}^2:=\begin{pmatrix}
\varepsilon^{-1}(\hat{\alpha}_{\varepsilon,n}-\alpha_0)\\
\sqrt{n}(\hat{\beta}_{\varepsilon,n}-\beta_0)
\end{pmatrix}, 
\end{align*}
\begin{align*}
C_{\varepsilon,n}^2:=\begin{pmatrix}
\varepsilon^2\int_{0}^{1}{\partial_{{\alpha}}}^2V_{\varepsilon,n,v}^{(v+2)}(\alpha_0+u(\hat{\alpha}_{\varepsilon,n}-\alpha_0)|\hat{\alpha}_{\varepsilon,n}^{(1)},\hat{\beta}_{\varepsilon,n})du &\frac{\varepsilon}{\sqrt{n}}\int_{0}^{1}{\partial_{\alpha\beta}^2V_{\varepsilon,n,v}^{(v+2)}(\alpha_0|\hat{\alpha}_{\varepsilon,n}^{(v)},\beta_0+u(\hat{\beta}_{\varepsilon,n}-\beta_0))}du\\
0 &\frac{1}{n}\int_{0}^{1}{\partial_{{\beta}}}^2V_{\varepsilon,n,v}^{(v+1)}(\beta_0+u(\hat{\beta}_{\varepsilon,n}-\beta_0)|\hat{\alpha}_{\varepsilon,n}^{(v)})du
\end{pmatrix}.
\end{align*}
In an analogous manner to the proof of Theorem 1 in S\o rensen and Uchida \cite{Sorensen_Uchida_2003}, it is sufficient to show the following convergences:
\begin{align}
&\sup_{u\in[0,1]}\left|\varepsilon^2\partial_{{\alpha}}^2V_{\varepsilon,n,v}^{(v+2)}(\alpha_0+u(\hat{\alpha}_{\varepsilon,n}-\alpha_0)|\hat{\alpha}_{\varepsilon,n}^{(v)},\hat{\beta}_{\varepsilon,n})-2I_b(\theta_0)\right|\overset{P}{\to}0, \label{thm2:1}\\
&\sup_{u\in[0,1]}\left|\frac{1}{n}\partial_{{\beta}}^2V_{\varepsilon,n,v}^{(v+1)}(\beta_0+u(\hat{\beta}_{\varepsilon,n}-\beta_0)|\hat{\alpha}_{\varepsilon,n}^{(v)})-2I_\sigma(\beta_0)\right|\overset{P}{\to}0, \label{thm2:2}\\
&\sup_{u\in[0,1]}\left|\frac{\varepsilon}{\sqrt{n}}\partial_{\alpha\beta}^2V_{\varepsilon,n,v}^{(v+2)}(\alpha_0|\hat{\alpha}_{\varepsilon,n}^{(v)},\beta_0+u(\hat{\beta}_{\varepsilon,n}-\beta_0))\right|\overset{P}{\to}0, \label{thm2:3}\\
&\quad-\varepsilon\partial_{{\alpha}}V_{\varepsilon,n,v}^{(v+2)}(\alpha_0|\hat{\alpha}_{\varepsilon,n}^{(v)},\beta_0)\overset{d}{\to} N_{p}(0,4I_b(\theta_0)), \label{thm2:4}\\
&\quad-\frac{1}{\sqrt{n}}\partial_{{\beta}}V_{\varepsilon,n,v}^{(v+1)}(\beta_0|\hat{\alpha}_{\varepsilon,n}^{(v)})
\overset{d}{\to} N_{q}(0,4I_\sigma(\beta_0)). \label{thm2:5}
\end{align}
By the definitions of $U_{\varepsilon,n,v}^{(2)}$ and $V_{\varepsilon,n,v}^{(v+1)}$, we have already shown \eqref{thm2:2} and \eqref{thm2:5} in the proof of Theorem \ref{thm:1}.

Proof of \eqref{thm2:1}. It follows from Lemma 2 in S\o rensen and Uchida \cite{Sorensen_Uchida_2003}, Lemma 4 in Gloter and S\o rensen \cite{Gloter_2009}
and the consistency of $\hat{\beta}_{\varepsilon,n}$ that
\begin{align*}
&\varepsilon^2 \partial_{{\alpha}}^2 V_{\varepsilon,n,v}^{(v+2)}(\alpha|\hat{\alpha}_{\varepsilon,n}^{(v)},\hat{\beta}_{\varepsilon,n})\notag\\
&=
-2\sum_{k=1}^{n}{\left(\partial_{{\alpha}}^2 b(X_{t_{k-1}^n},\alpha) \right)^\top[\sigma\sigma^\top]^{-1}(X_{t_{k-1}^n},\hat{\beta}_{\varepsilon,n})P_{1,k}(\alpha_0)}\\
&\quad+2\sum_{k=1}^{n}{\left(\partial_{{\alpha}}^2 b(X_{t_{k-1}^n},\alpha) \right)^\top[\sigma\sigma^\top]^{-1}(X_{t_{k-1}^n},\hat{\beta}_{\varepsilon,n})Q_{v,k}(\hat{\alpha}_{\varepsilon,n}^{(v)})}\\
&\quad+\frac{2}{n}\sum_{k=1}^{n}{ \left(\partial_{{\alpha}}^2 b(X_{t_{k-1}^n},\alpha) \right)^\top[\sigma\sigma^\top]^{-1}(X_{t_{k-1}^n},\beta_0)\left(b(X_{t_{k-1}^n},\alpha)-b(X_{t_{k-1}^n},\alpha_0)\right)}\\
&\quad+\frac{2}{n}\sum_{k=1}^{n}{\left(\partial_{{\alpha}}^2 b(X_{t_{k-1}^n},\alpha)\right)^\top\left([\sigma\sigma^\top]^{-1}(X_{t_{k-1}^n},\hat{\beta}_{\varepsilon,n})-[\sigma\sigma^\top]^{-1}(X_{t_{k-1}^n},\beta_0)\right)\left(b(X_{t_{k-1}^n},\alpha)-b(X_{t_{k-1}^n},\alpha_0)\right)}\\
&\quad+\frac{2}{n}\sum_{k=1}^{n}{\left(\partial_{{\alpha}} b(X_{t_{k-1}^n},\alpha)\right)^\top[\sigma\sigma^\top]^{-1}(X_{t_{k-1}^n},\beta_0)\left(\partial_{{\alpha}} b(X_{t_{k-1}^n},\alpha)\right)}\\
&\quad+\frac{2}{n}\sum_{k=1}^{n}{\left(\partial_{{\alpha}} b(X_{t_{k-1}^n},\alpha)\right)^\top\left([\sigma\sigma^\top]^{-1}(X_{t_{k-1}^n},\hat{\beta}_{\varepsilon,n})-[\sigma\sigma^\top]^{-1}(X_{t_{k-1}^n},\beta_0)\right)\left(\partial_{{\alpha}} b(X_{t_{k-1}^n},\alpha)\right)}\\
&\overset{P}{\to} 2B_2(\alpha,\theta_0)\quad\text{uniformly in }\alpha.
\end{align*}
In an analogous manner to the proof of \eqref{thm1:1}, it follows from the consistency for $\hat{\alpha}_{\varepsilon,n}$ and the uniform continuity of $B_2$ that for all $\delta>0$,
\begin{align*}
&P\left(\sup_{u\in[0,1]}\left|\varepsilon^2\partial_{{\alpha}}^2V_{\varepsilon,n,v}^{(v+2)}(\alpha_0+u(\hat{\alpha}_{\varepsilon,n}-\alpha_0)|\hat{\alpha}_{\varepsilon,n}^{(v)},\hat{\beta}_{\varepsilon,n})-2I_b(\theta_0)\right|>\delta\right)\\
&\le P\left(\sup_{\alpha\in\Theta_\alpha}\left|\varepsilon^2\partial_{{\alpha}}^2V_{\varepsilon,n,v}^{(v+2)}(\alpha|\hat{\alpha}_{\varepsilon,n}^{(v)},\hat{\beta}_{\varepsilon,n})-2B_2(\alpha,\theta_0)\right|>\frac{\delta}{2}\right)\\
&\quad+P\left(\sup_{u\in[0,1]}\left|2B_2(\alpha_0+u(\hat{\alpha}_{\varepsilon,n}-\alpha_0),\theta_0)-2I(\theta_0)\right|>\frac{\delta}{2}\right)\\
&\to 0\quad(\varepsilon\to0,n\to\infty).
\end{align*}
This implies \eqref{thm2:1}.

Proof of \eqref{thm2:3}.
By using the Lipschitz continuity of $n^2Q_{v,k}(\alpha)$, Lemma \ref{lem:2} in this paper and Lemma 4-(2)  in Gloter and S\o rensen \cite{Gloter_2009}, it holds that
\small
\begin{align*}
\frac{\varepsilon}{\sqrt{n}}\partial_{\alpha\beta}^2V_{\varepsilon,n,v}^{(v+2)}(\alpha_0|\hat{\alpha}_{\varepsilon,n}^{(v)},\beta)&=
-2(\varepsilon\sqrt{n})^{-1}\sum_{k=1}^{n}{\left(\partial_{{\alpha}} b(X_{t_{k-1}^n},\alpha)\right)^\top\left( \partial_{{\beta}}[\sigma\sigma^\top]^{-1}(X_{t_{k-1}^n},\beta)\right)P_{v,k}(\alpha_0)}\\
&\quad+2(\varepsilon\sqrt{n})^{-1}\sum_{k=1}^{n}{\left(\partial_{{\alpha}} b(X_{t_{k-1}^n},\alpha)\right)^\top \left(\partial_{{\beta}}[\sigma\sigma^\top]^{-1}(X_{t_{k-1}^n},\beta)\right)\left(Q_{v,k}(\hat{\alpha}_{\varepsilon,n}^{(v)})-Q_{v,k}(\alpha_0)\right)}\\
&\overset{P}{\to}0 \quad\text{uniformly in }\beta.
\end{align*}
\normalsize
Hence, we have \eqref{thm2:3}.

Proof of \eqref{thm2:4}.
For $1\le l\le p$, it follows from the Lipschitz continuity 
of $n^2Q_{v,k}(\alpha)$, Lemma \ref{lem:2} in this paper and Lemma 4-(2)  in Gloter and S\o rensen \cite{Gloter_2009} that
\begin{align*}
-\varepsilon\partial_{{\alpha}_l}V_{\varepsilon,n,v}^{(v+2)}(\alpha_0|\hat{\alpha}_{\varepsilon,n}^{(v)},\beta_0)&=2\varepsilon^{-1}\sum_{k=1}^{n}{\left(\partial_{{\alpha}_l} b(X_{t_{k-1}^n},\alpha_0)\right)^\top[\sigma\sigma^\top]^{-1}(X_{t_{k-1}^n},\beta_0)P_{v,k}(\alpha_0)}\\
&\quad-2\varepsilon^{-1}\sum_{k=1}^{n}{\left(\partial_{{\alpha}_l} b(X_{t_{k-1}^n},\alpha_0)\right)^\top[\sigma\sigma^\top]^{-1}(X_{t_{k-1}^n},\beta_0)\left(Q_{v,k}(\hat{\alpha}_{\varepsilon,n}^{(v)})-Q_{v,k}(\alpha_0)\right)}\\
&=\sum_{k=1}^{n}{\xi_{k,1}^l(\theta_0)}+o_P(1).
\end{align*}
Since the main term of $-\varepsilon\partial_{{\alpha}_l}V_{\varepsilon,n,v}^{(v+2)}(\alpha_0|\hat{\alpha}_{\varepsilon,n}^{(v)},\beta_0)$ is the same as that of $-\varepsilon\partial_{{\alpha}_l}U_{\varepsilon,n,v}^{(3)}(\alpha_0|\beta_0)$, 
we utilize the results in the proof of Theorem \ref{thm:1} and complete the proof. 
\end{proof}

For the proof of theorem \ref{thm:3-small}, we first show the following lemma.

\begin{lemma}\label{lem:3-small}
Assume {\bf{[A1]}}-{\bf{[A4]}} and {\bf{[B]}}. 
Then it follows 
that
\begin{align*}
\varepsilon^{-1}(\tilde{\alpha}_{\varepsilon,n}^{(1)}-\tilde{\alpha}_{\varepsilon,n}^{(1),H_0})&\overset{d}{\to}
(J_b^{-1}(\alpha_0)-G_1^r(\alpha_0))Z
\quad(\mathrm{under\  } H_0^{(1)}),\\
\sqrt{n}(\tilde{\beta}_{\varepsilon,n}-\tilde{\beta}_{\varepsilon,n}^{H_0})&\overset{d}{\to}
(I_\sigma^{-1}(\beta_0)-G_2^s(\beta_0))Y
\quad(\mathrm{under\  } H_0^{(2)})
\end{align*}
as $\varepsilon\to0$ and $n\to\infty$, 
where $Z$ and $Y$ are random vectors which have normal distribution $N_p(0,K_b(\theta_0))$ and $N_q(0,I_\sigma(\beta_0))$, respectively.
\end{lemma}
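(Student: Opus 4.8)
The plan is to linearise both the unconstrained estimators and the restricted ones around the true value — which under the relevant null hypothesis lies in the restricted parameter space — and then to read off the rescaled difference as a fixed linear image of the common leading stochastic term, namely the rescaled gradient of the contrast function at the truth. As a preliminary I would record the consistency and rate of the restricted estimators: $\tilde{\alpha}_{\varepsilon,n}^{(1),H_0}\overset{P}{\to}\alpha_0$ under $H_0^{(1)}$ and $\tilde{\beta}_{\varepsilon,n}^{H_0}\overset{P}{\to}\beta_0$ under $H_0^{(2)}$. These come for free from the proofs of Lemma \ref{lem:1} and Theorem \ref{thm:1}: the uniform convergences \eqref{U1_conv-small} and \eqref{conv:U2} hold on the whole parameter space, hence in particular on the compact subsets $\Theta_\alpha^{H_0}\ni\alpha_0$ and $\Theta_\beta^{H_0}\ni\beta_0$, and the identifiability condition [{\bf A3}] is inherited by these subspaces, so the $\argmin$-argument of Lemma \ref{lem:1} applies. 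Running the $O_P(1)$-rate argument of Lemma \ref{lem:1} on the linear subspace $\Theta_\alpha^{H_0}$, together with [{\bf A4}], also gives $\varepsilon^{-1}(\tilde{\alpha}_{\varepsilon,n}^{(1),H_0}-\alpha_0)=O_P(1)$, and likewise $\sqrt{n}(\tilde{\beta}_{\varepsilon,n}^{H_0}-\beta_0)=O_P(1)$.

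For the $\alpha$-part, write $\alpha=(\alpha',\alpha'')$ with $\alpha'=(\alpha_1,\dots,\alpha_r)$, partition $J_b(\alpha_0)$ and $K_b(\theta_0)$ accordingly, and note that under $H_0^{(1)}$ one has $\alpha_0'=0=\tilde{\alpha}_{\varepsilon,n}^{(1),H_0,\prime}$. Taylor-expanding the restricted first-order condition $\partial_{\alpha''}U_{\varepsilon,n,v}^{(1)}(\tilde{\alpha}_{\varepsilon,n}^{(1),H_0})=0$ at $\alpha_0$, the correction coming from the $\alpha'$-direction vanishes (since $\tilde{\alpha}_{\varepsilon,n}^{(1),H_0,\prime}-\alpha_0'=0$), so $\varepsilon^{-1}(\tilde{\alpha}_{\varepsilon,n}^{(1),H_0}-\alpha_0)$ is zero in its first $r$ coordinates and, in the remaining coordinates, equals the inverse of a mean-value-form $(\alpha'',\alpha'')$-Hessian applied to $-\varepsilon\partial_{\alpha''}U_{\varepsilon,n,v}^{(1)}(\alpha_0)$; for the unconstrained estimator \eqref{taylor:U1} gives the analogous identity with the full gradient and Hessian. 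Using \eqref{conv:B1} and \eqref{main1:lem1} (the rescaled Hessian converges uniformly to $2J_b(\alpha_0)$, whence its $(\alpha'',\alpha'')$-block to twice the lower-right block of $J_b(\alpha_0)$), \eqref{main2:lem1} (the rescaled gradient converges to $N_p(0,4K_b(\theta_0))$), and the consistency and $O_P(1)$-rates above to replace all intermediate evaluation points by $\alpha_0$, I would subtract the two expansions. Both are governed by the same Gaussian limit $Z\sim N_p(0,K_b(\theta_0))$ of $-\tfrac12\varepsilon\partial_\alpha U_{\varepsilon,n,v}^{(1)}(\alpha_0)$, so the difference converges jointly to a deterministic linear transform of $Z$, and a short block-matrix computation identifies this transform with $J_b^{-1}(\alpha_0)-G_1^r(\alpha_0)$.

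The $\beta$-part is handled identically, now with the contrast $U_{\varepsilon,n,v}^{(2)}(\,\cdot\mid\tilde{\alpha}_{\varepsilon,n}^{(1)})$, the partition $\beta=(\beta',\beta'')$ into the first $s$ and the last $q-s$ coordinates, the uniform Hessian limit \eqref{conv:B3} (with $B_3(\beta_0,\beta_0)=2I_\sigma(\beta_0)$), and the gradient central limit theorem $-n^{-1/2}\partial_\beta U_{\varepsilon,n,v}^{(2)}(\beta_0\mid\tilde{\alpha}_{\varepsilon,n}^{(1)})\overset{d}{\to}N_q(0,4I_\sigma(\beta_0))$ that is established inside the proof of Theorem \ref{thm:1} (there the dependence on the preliminary $\tilde{\alpha}_{\varepsilon,n}^{(1)}$ is shown to be asymptotically negligible via the vanishing of the rescaled mixed derivative $\tfrac{\varepsilon}{\sqrt{n}}\partial^2_{\alpha\beta}U_{\varepsilon,n,v}^{(2)}$). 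The same block computation then yields the limit $(I_\sigma^{-1}(\beta_0)-G_2^s(\beta_0))Y$ with $Y\sim N_q(0,I_\sigma(\beta_0))$.

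The technical core is the second step: one must verify that in the Taylor expansion of the restricted first-order condition the contribution from the constrained directions genuinely drops out, and that the mean-value Hessians can be replaced by their deterministic limits uniformly — which is exactly where the consistency and the $O_P(1)$-rate of the restricted estimator are needed — so that the constrained and unconstrained expansions share one and the same limiting Gaussian score. Granting that, the passage from ``same score, different Hessian block'' to the stated projections $J_b^{-1}(\alpha_0)-G_1^r(\alpha_0)$ and $I_\sigma^{-1}(\beta_0)-G_2^s(\beta_0)$ is routine linear algebra that I would not write out in detail.
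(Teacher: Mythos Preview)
Your proposal is correct and follows essentially the same route as the paper's proof: linearise both estimators around the true value (which under the null lies in the restricted space), use the uniform Hessian limit and the score CLT, and read off the difference as a fixed linear image of the common limiting Gaussian score. The only cosmetic difference is that you set up the linear algebra via an explicit block partition $\alpha=(\alpha',\alpha'')$ and invert the lower-right Hessian block directly, whereas the paper packages the same computation through the identities $G_1^r(\alpha_0)J_b(\alpha_0)\varepsilon^{-1}(\tilde{\alpha}_{\varepsilon,n}^{(1),H_0}-\alpha_0)=\varepsilon^{-1}(\tilde{\alpha}_{\varepsilon,n}^{(1),H_0}-\alpha_0)$ and $G_1^r(\alpha_0)\partial_\alpha U_{\varepsilon,n,v}^{(1)}(\tilde{\alpha}_{\varepsilon,n}^{(1),H_0})=0$, arriving at $(E_p-J_b G_1^r)$ applied to the score and then premultiplying by $J_b^{-1}$; the two computations are equivalent.
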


\begin{proof}[\bf Proof of Lemma \ref{lem:3-small}]
It follows from Taylor's theorem that
\begin{align*}
\varepsilon\partial_{{\alpha}}U_{\varepsilon,n,v}^{(1)}(\tilde{\alpha}_{\varepsilon,n}^{(1),H_0})-\varepsilon\partial_{{\alpha}}U_{\varepsilon,n,v}^{(1)}(\alpha_0)=J_n(\tilde{\alpha}_{\varepsilon,n}^{(1),H_0},\alpha_0)\varepsilon^{-1}(\tilde{\alpha}_{\varepsilon,n}^{(1),H_0}-\alpha_0),
\end{align*} 
where
\begin{align*}
J_n(\alpha,\bar{\alpha})=\int_{0}^{1}{\varepsilon^2\partial_{{\alpha}}^2U_{\varepsilon,n,v}^{(1)}(\bar{\alpha}+u(\alpha-\bar{\alpha}))du}.
\end{align*}
From \eqref{main1:lem1} and \eqref{main2:lem1}, one has that under $H_0^{(1)}$, 
\begin{align*}
J_n(\tilde{\alpha}_{\varepsilon,n}^{(1),H_0},\alpha_0)\overset{P}{\to} 2 J_b(\alpha_0),\quad-\varepsilon\partial_{{\alpha}}U_{\varepsilon,n,v}^{(1)}(\alpha_0)\overset{d}{\to} 2Z.
\end{align*}
Noting that the first $r$ components of $(\tilde{\alpha}_{\varepsilon,n}^{(1),H_0}-\alpha_0)$ are all zero, we can calculate
\begin{align*}
G_1^r(\alpha_0)J_b(\alpha_0)\varepsilon^{-1}(\tilde{\alpha}_{\varepsilon,n}^{(1),H_0}-\alpha_0)=\varepsilon^{-1}(\tilde{\alpha}_{\varepsilon,n}^{(1),H_0}-\alpha_0).
\end{align*}
Hence, it follows from Lemma \ref{lem:1} and $G_1^r(\alpha_0)\partial_{{\alpha}}U_{\varepsilon,n,v}^{(1)}(\tilde{\alpha}_{\varepsilon,n}^{(1),H_0})=0$ that under $H_0^{(1)}$,
\begin{align*}
-\varepsilon G_1^r(\alpha_0) \partial_{{\alpha}}U_{\varepsilon,n,v}^{(1)}(\alpha_0)
&=G_1^r(\alpha_0)J_n(\tilde{\alpha}_{\varepsilon,n}^{(1),H_0},\alpha_0)\varepsilon^{-1}(\tilde{\alpha}_{\varepsilon,n}^{(1),H_0}-\alpha_0)\\
&=G_1^r(\alpha_0)(2J_b(\alpha_0)+o_P(1))\varepsilon^{-1}(\tilde{\alpha}_{\varepsilon,n}^{(1),H_0}-\alpha_0)\\
&=2\varepsilon^{-1}(\tilde{\alpha}_{\varepsilon,n}^{(1),H_0}-\alpha_0)+o_P(1).
\end{align*}
Therefore, one has that under $H_0^{(1)}$,
\begin{align*}
-\varepsilon  \partial_{{\alpha}}U_{\varepsilon,n,v}^{(1)}(\tilde{\alpha}_{\varepsilon,n}^{(1),H_0})
&=-\varepsilon  \partial_{{\alpha}}U_{\varepsilon,n,v}^{(1)}(\alpha_0)-J_n(\tilde{\alpha}_{\varepsilon,n}^{(1),H_0},\alpha_0)\varepsilon^{-1}(\tilde{\alpha}_{\varepsilon,n}^{(1),H_0}-\alpha_0)\\
&=-\varepsilon  \partial_{{\alpha}}U_{\varepsilon,n,v}^{(1)}(\alpha_0)-J_b(\alpha_0)\cdot2\varepsilon^{-1}(\tilde{\alpha}_{\varepsilon,n}^{(1),H_0}-\alpha_0)+o_P(1)\\
&=-\varepsilon  \partial_{{\alpha}}U_{\varepsilon,n,v}^{(1)}(\alpha_0)+ J_b(\alpha_0)G_1^r(\alpha_0)\varepsilon \partial_{{\alpha}}U_{\varepsilon,n,v}^{(1)}(\alpha_0)+o_P(1)\\
&=-\left(E_p-J_b(\alpha_0)G_1^r(\alpha_0)\right)\varepsilon  \partial_{{\alpha}}U_{\varepsilon,n,v}^{(1)}(\alpha_0) +o_P(1)\\
&\overset{d}{\to}2\left(E_p-J_b(\alpha_0)G_1^r(\alpha_0)\right)Z.
\end{align*}
On the other hand, it follows from Taylor's theorem and the consistency of estimator for $\alpha$ that
\begin{align*}
-\varepsilon\partial_{{\alpha}}U_{\varepsilon,n,v}^{(1)}(\tilde{\alpha}_{\varepsilon,n}^{(1),H_0})&=J_n(\tilde{\alpha}_{\varepsilon,n}^{(1),H_0},\tilde{\alpha}_{\varepsilon,n}^{(1)})\varepsilon^{-1}(\tilde{\alpha}_{\varepsilon,n}^{(1)}-\tilde{\alpha}_{\varepsilon,n}^{(1),H_0})\\
&=2J_b(\alpha_0)\varepsilon^{-1}(\tilde{\alpha}_{\varepsilon,n}^{(1)}-\tilde{\alpha}_{\varepsilon,n}^{(1),H_0})+o_P(1).
\end{align*}
 As a result, from {\bf{[A4]}}, it holds that
\begin{align*}
\varepsilon^{-1}(\tilde{\alpha}_{\varepsilon,n}^{(1)}-\tilde{\alpha}_{\varepsilon,n}^{(1),H_0})&\overset{d}{\to} J_b^{-1}(\alpha_0)\left(E_p-J_b(\alpha_0)G_1^r(\alpha_0)\right)Z\\
&=\left(J_b^{-1}(\alpha_0)-G_1^r(\alpha_0)\right)Z.
\end{align*}

For the second statement, it follows from Taylor's theorem with respect to $\beta$ that
\begin{align*}
\frac{1}{\sqrt{n}}\partial_{{\beta}}U_{\varepsilon,n,v}^{(2)}(\tilde{\beta}_{\varepsilon,n}^{H_0}|\tilde{\alpha}_{\varepsilon,n}^{(1)})-\frac{1}{\sqrt{n}}\partial_{{\beta}}U_{\varepsilon,n,v}^{(2)}(\beta_0|\tilde{\alpha}_{\varepsilon,n}^{(1)})=I_{n,\beta}(\tilde{\beta}_{\varepsilon,n}^{H_0},\beta_0|\tilde{\alpha}_{\varepsilon,n}^{(1)})\sqrt{n}(\tilde{\beta}_{\varepsilon,n}^{H_0}-\beta_0),
\end{align*} 
where
\begin{align*}
I_{n,\beta}(\beta,\bar{\beta}|\alpha)=\int_{0}^{1}{\frac{1}{n}\partial_{{\beta}}^2U_{\varepsilon,n,v}^{(2)}(\bar{\beta}+u(\beta-\bar{\beta})|\alpha)du}.
\end{align*}
Moreover,  It follows from Taylor's theorem with respect to $\alpha$ that
\begin{align*}
\frac{1}{\sqrt{n}}\partial_{{\beta}}U_{\varepsilon,n,v}^{(2)}(\beta_0|\tilde{\alpha}_{\varepsilon,n}^{(1)})=\frac{1}{\sqrt{n}}\partial_{{\beta}}U_{\varepsilon,n,v}^{(2)}(\beta_0|\alpha_0)+I_{n,\alpha\beta}(\beta_0|\tilde{\alpha}_{\varepsilon,n}^{(1)},\alpha_0)\varepsilon^{-1}(\tilde{\alpha}_{\varepsilon,n}^{(1)}-\alpha_0),
\end{align*} 
where
\begin{align*}
I_{n,\alpha\beta}(\beta|\alpha,\bar{\alpha})=\int_{0}^{1}{\frac{\varepsilon}{\sqrt{n}}\partial_{{\alpha\beta}}^2U_{\varepsilon,n,v}^{(2)}(\beta|\bar{\alpha}+u(\alpha-\bar{\alpha}))du}.
\end{align*}
From \eqref{thm1:2}-\eqref{thm1:4}, one has that under $H_0^{(2)}$, 
\begin{align*}
I_{n,\beta}(\tilde{\beta}_{\varepsilon,n}^{H_0},\beta_0|\tilde{\alpha}_{\varepsilon,n}^{(1)})\overset{P}{\to} 2 I_\sigma(\beta_0),\quad
I_{n,\alpha\beta}(\beta_0|\tilde{\alpha}_{\varepsilon,n}^{(1)},\alpha_0)\overset{P}{\to}0,\quad
-\frac{1}{\sqrt{n}}\partial_{{\alpha}}U_{\varepsilon,n,v}^{(2)}(\beta_0|\alpha_0)\overset{d}{\to} 2Y.
\end{align*}
We can calculate
\begin{align*}
G_2^s(\beta_0)I_\sigma(\beta_0)\sqrt{n}(\tilde{\beta}_{\varepsilon,n}^{H_0}-\beta_0)=\sqrt{n}(\tilde{\beta}_{\varepsilon,n}^{H_0}-\beta_0),
\end{align*}
and from the facts that 
$\sqrt{n}(\tilde{\beta}_{\varepsilon,n}^{H_0}-\beta_0)=O_P(1)$ and $G_2^s(\beta_0)\partial_{{\alpha}}U_{\varepsilon,n,v}^{(2)}(\tilde{\beta}_{\varepsilon,n}^{H_0}|\tilde{\alpha}_{\varepsilon,n}^{(1)})=0$,  
it follows that under $H_0^{(2)}$,
 \begin{align*}
-\frac{1}{\sqrt{n}} G_2^s(\beta_0)\partial_{{\alpha}}U_{\varepsilon,n,v}^{(2)}(\beta_0|\alpha_0)
&=G_2^s(\beta_0)I_{n,\beta}(\tilde{\beta}_{\varepsilon,n}^{H_0},\beta_0|\tilde{\alpha}_{\varepsilon,n}^{(1)})\sqrt{n}(\tilde{\beta}_{\varepsilon,n}^{H_0}-\beta_0)+o_P(1)\\
&=2G_2^s(\beta_0)I_{\sigma}(\beta_0)\sqrt{n}(\tilde{\beta}_{\varepsilon,n}^{H_0}-\beta_0)+o_P(1)\\
&=2\sqrt{n}(\tilde{\beta}_{\varepsilon,n}^{H_0}-\beta_0)+o_P(1).
\end{align*}
Therefore, one has that under $H_0^{(2)}$,
\begin{align*}
-\frac{1}{\sqrt{n}}\partial_{{\beta}}U_{\varepsilon,n,v}^{(2)}(\tilde{\beta}_{\varepsilon,n}^{H_0}|\tilde{\alpha}_{\varepsilon,n}^{(1)})
&=-\frac{1}{\sqrt{n}}\partial_{{\beta}}U_{\varepsilon,n,v}^{(2)}(\beta_0|\alpha_0)
-I_{n,\beta}(\tilde{\beta}_{\varepsilon,n}^{H_0},\beta_0|\tilde{\alpha}_{\varepsilon,n}^{(1)})\sqrt{n}(\tilde{\beta}_{\varepsilon,n}^{H_0}-\beta_0)+o_P(1)\\
&=-\frac{1}{\sqrt{n}}\partial_{{\beta}}U_{\varepsilon,n,v}^{(2)}(\beta_0|\alpha_0)
-I_{\sigma}(\beta_0)\cdot2\sqrt{n}(\tilde{\beta}_{\varepsilon,n}^{H_0}-\beta_0)+o_P(1)\\
&=-\left(E_q-I_\sigma(\beta_0)G_2^s(\beta_0)\right)\frac{1}{\sqrt{n}}\partial_{{\beta}}U_{\varepsilon,n,v}^{(2)}(\beta_0|\alpha_0) +o_P(1)\\
&\overset{d}{\to}2\left(E_q-I_\sigma(\beta_0)G_2^s(\beta_0)\right)Y.
\end{align*}
On the other hand, it follows from Taylor's theorem and the consistency of estimator for $(\alpha,\beta)$ that
\begin{align*}
-\frac{1}{\sqrt{n}}\partial_{{\beta}}U_{\varepsilon,n,v}^{(2)}(\tilde{\beta}_{\varepsilon,n}^{H_0}|\tilde{\alpha}_{\varepsilon,n}^{(1)})&=I_{n,\beta}(\tilde{\beta}_{\varepsilon,n}^{H_0},\tilde{\beta}_{\varepsilon,n}|\tilde{\alpha}_{\varepsilon,n}^{(1)})\sqrt{n}(\tilde{\beta}_{\varepsilon,n}-\tilde{\beta}_{\varepsilon,n}^{H_0})\\
&=2I_\sigma(\beta_0)\sqrt{n}(\tilde{\beta}_{\varepsilon,n}-\tilde{\beta}_{\varepsilon,n}^{H_0})+o_P(1).
\end{align*}
 As a result, from {\bf{[A4]}}, it holds that
\begin{align*}
\sqrt{n}(\tilde{\beta}_{\varepsilon,n}-\tilde{\beta}_{\varepsilon,n}^{H_0})&\overset{d}{\to} I_\sigma^{-1}(\beta_0)\left(E_q-I_\sigma(\beta_0)G_2^s(\beta_0)\right)Y\\
&=\left(I_\sigma^{-1}(\beta_0)-G_2^s(\beta_0)\right)Y.
\end{align*}
\end{proof}
\begin{proof}[\bf Proof of Theorem \ref{thm:3-small}]
One has from Taylor's theorem that
\begin{align*}
\tilde{\Lambda}_{n}^{(1)}&=U_{\varepsilon,n,v}^{(1)}\left(\tilde{\alpha}_{\varepsilon,n}^{(1),H_0}\right)-U_{\varepsilon,n,v}^{(1)}\left(\tilde{\alpha}_{\varepsilon,n}^{(1)}\right)\\
&=\left(\varepsilon^{-1}(\tilde{\alpha}_{\varepsilon,n}^{(1)}-\tilde{\alpha}_{\varepsilon,n}^{(1),H_0})\right)^\top
\left(\int_{0}^{1}{(1-u)\varepsilon^2\partial_{{\alpha}}^2}U_{\varepsilon,n,v}^{(1)}(\tilde{\alpha}_{\varepsilon,n}^{(1)}+u(\tilde{\alpha}_{\varepsilon,n}^{(1),H_0}-\tilde{\alpha}_{\varepsilon,n}^{(1)}))du\right) \varepsilon^{-1}(\tilde{\alpha}_{\varepsilon,n}^{(1)}-\tilde{\alpha}_{\varepsilon,n}^{(1),H_0}).
\end{align*}
From \eqref{main1:lem1}, 
we have that under $H_0^{(1)}$, 
\begin{align*}
\int_{0}^{1}{(1-u)\varepsilon^2\partial_{{\alpha}}^2}U_{\varepsilon,n,v}^{(1)}(\tilde{\alpha}_{\varepsilon,n}^{(1)}+u(\tilde{\alpha}_{\varepsilon,n}^{(1),H_0}-\tilde{\alpha}_{\varepsilon,n}^{(1)}))du\overset{P}{\to}J_b(\alpha_0).
\end{align*}
Therefore, Lemma  \ref{lem:3-small} and  the continuous mapping theorem 
yield that
\begin{align*}
\tilde{\Lambda}_{n}^{(1)}&\overset{d}{\to} Z^\top\left(J_b^{-1}(\alpha_0)-G_1^r(\alpha_0)\right)J_b(\alpha_0) \left(J_b^{-1}(\alpha_0)-G_1^r(\alpha_0)\right)Z\\
&=Z^\top \left(J_b^{-1}(\alpha_0)-G_1^r(\alpha_0)\right)Z\sim \pi_r.
\end{align*}

Next, Taylor's theorem implies that
\begin{align*}
\tilde{\Lambda}_{n}^{(2)}&=U_{\varepsilon,n,v}^{(2)}\left(\tilde{\beta}_{\varepsilon,n}^{H_0}|\tilde{\alpha}_{\varepsilon,n}^{(1)}\right)-U_{\varepsilon,n,v}^{(2)}\left(\tilde{\beta}_{\varepsilon,n}|\tilde{\alpha}_{\varepsilon,n}^{(1)}\right)\\
&=\left(\sqrt{n}(\tilde{\beta}_{\varepsilon,n}-\tilde{\beta}_{\varepsilon,n}^{H_0})\right)^\top
\left(\int_{0}^{1}{\frac{1-u}{n}\partial_{{\beta}}^2}U_{\varepsilon,n,v}^{(2)}(\tilde{\beta}_{\varepsilon,n}+u(\tilde{\beta}_{\varepsilon,n}^{H_0}-\tilde{\beta}_{\varepsilon,n}|\tilde{\alpha}_{\varepsilon,n}^{(1)}))du\right) \sqrt{n}(\tilde{\beta}_{\varepsilon,n}-\tilde{\beta}_{\varepsilon,n}^{H_0}).
\end{align*}
From \eqref{thm1:2}, 
we have that under $H_0^{(2)}$,
\begin{align*}
\int_{0}^{1}{\frac{1-u}{n}\partial_{{\beta}}^2}U_{\varepsilon,n,v}^{(2)}(\tilde{\beta}_{\varepsilon,n}+u(\tilde{\beta}_{\varepsilon,n}^{H_0}-\tilde{\beta}_{\varepsilon,n}|\tilde{\alpha}_{\varepsilon,n}^{(1)}))du\overset{P}{\to}I_\sigma(\beta_0).
\end{align*}
Therefore, we define $Y'$ as $Y=I_\sigma^{\frac{1}{2}}Y'$, and 
Lemma \ref{lem:3-small} yields that
\begin{align*}
\tilde{\Lambda}_{n}^{(2)}&\overset{d}{\to} Y^\top\left(I_\sigma^{-1}(\beta_0)-G_2^s(\beta_0)\right)I_\sigma(\beta_0) \left(I_\sigma^{-1}(\beta_0)-G_2^s(\beta_0)\right)Y\\
&=Y'^\top I_\sigma^{\frac{1}{2}}(\beta_0)\left(I_\sigma^{-1}(\beta_0)-G_2^s(\beta_0)\right)I_\sigma^{\frac{1}{2}}(\beta_0)Y'\\
&=Y'^\top(E_p-I_\sigma^{\frac{1}{2}}(\beta_0)G_2^s(\beta_0)I_\sigma^{\frac{1}{2}}(\beta_0))Y'\sim \chi^2_s.
\end{align*}
\end{proof}


In order to prove Theorem \ref{thm:4-small}, we show the following lemma.
\begin{lemma}\label{lem:4-small}
Assume {\bf{[A1]}}-{\bf{[A4]}}, {\bf{[B]}} and {\bf{[C]}}. 
Then it follows 
that
\begin{align*}
\tilde{\alpha}_{\varepsilon,n}^{(1),H_0}\overset{P}{\to}\alpha_0^{H_0}\quad(\mathrm{under\  } H_1^{(1)}),\quad
\tilde{\beta}_{\varepsilon,n}^{H_0}\overset{P}{\to}\beta_0^{H_0} \quad(\mathrm{under\  } H_1^{(2)})
\end{align*}
as $\varepsilon\to0$ and $n\to\infty$.
\end{lemma}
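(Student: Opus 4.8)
The plan is to prove both assertions by the standard argmin-consistency argument, exactly as in the consistency parts of Lemma \ref{lem:1} and Theorem \ref{thm:1}, but with the true value replaced by the restricted optimal parameter and the global identifiability condition [{\bf A3}] replaced by the well-separatedness condition [{\bf C}]. The two claims are parallel, so I would treat them in turn, relying on uniform convergence of the normalized contrast functions (already available from the proofs above) together with compactness of the restricted parameter spaces.

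For the first claim, I would first note that the uniform convergence
\begin{align*}
\varepsilon^2\left(U_{\varepsilon,n,v}^{(1)}(\alpha)-U_{\varepsilon,n,v}^{(1)}(\alpha_0)\right)\overset{P}{\to}U_1(\alpha,\alpha_0)\quad\text{uniformly in }\alpha
\end{align*}
from \eqref{U1_conv-small} was derived without reference to the null hypothesis, hence remains valid under $H_1^{(1)}$. Fix $\omega\in\Omega$. Since $\Theta_\alpha^{H_0}$ is a closed subset of the compact set $\Theta_\alpha$, along any sequence $(\varepsilon_m,n_m)$ there is a subsequence $(\varepsilon_m',n_m')$ with $\tilde{\alpha}_{\varepsilon_m',n_m'}^{(1),H_0}(\omega)\to\alpha_\infty\in\Theta_\alpha^{H_0}$. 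By the definition of $\tilde{\alpha}_{\varepsilon,n}^{(1),H_0}$ as the minimizer of $U_{\varepsilon,n,v}^{(1)}$ over $\Theta_\alpha^{H_0}$ and $\alpha_0^{H_0}\in\Theta_\alpha^{H_0}$,
\begin{align*}
\varepsilon^2\left(U_{\varepsilon,n,v}^{(1)}(\tilde{\alpha}_{\varepsilon,n}^{(1),H_0})-U_{\varepsilon,n,v}^{(1)}(\alpha_0)\right)\le\varepsilon^2\left(U_{\varepsilon,n,v}^{(1)}(\alpha_0^{H_0})-U_{\varepsilon,n,v}^{(1)}(\alpha_0)\right),
\end{align*}
and letting $(\varepsilon_m',n_m')\to\infty$, the uniform convergence and the continuity of $U_1$ give $U_1(\alpha_\infty,\alpha_0)\le U_1(\alpha_0^{H_0},\alpha_0)$. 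By the definition of $\alpha_0^{H_0}$ together with [{\bf C}]-(i), the minimizer of $U_1(\cdot,\alpha_0)$ over $\Theta_\alpha^{H_0}$ is unique, so $\alpha_\infty=\alpha_0^{H_0}$; since the subsequence was arbitrary, $\tilde{\alpha}_{\varepsilon,n}^{(1),H_0}\overset{P}{\to}\alpha_0^{H_0}$ under $H_1^{(1)}$.

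For the second claim I would argue in the same way, now using
\begin{align*}
\frac{1}{n}\left(U_{\varepsilon,n,v}^{(2)}(\beta|\tilde{\alpha}_{\varepsilon,n}^{(1)})-U_{\varepsilon,n,v}^{(2)}(\beta_0|\tilde{\alpha}_{\varepsilon,n}^{(1)})\right)\overset{P}{\to}U_2(\beta,\beta_0)\quad\text{uniformly in }\beta
\end{align*}
from \eqref{conv:U2}, the continuity of $U_2$, and [{\bf C}]-(ii) to guarantee uniqueness of the minimizer $\beta_0^{H_0}$ of $U_2(\cdot,\beta_0)$ over the compact set $\Theta_\beta^{H_0}$. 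The only point requiring attention here is that the derivation of \eqref{conv:U2} uses $\tilde{\alpha}_{\varepsilon,n}^{(1)}\overset{P}{\to}\alpha_0$, which holds by Lemma \ref{lem:1} regardless of whether $H_0^{(2)}$ or $H_1^{(2)}$ is in force, since the true value $\alpha_0$ lies in $\mathrm{Int}(\Theta_\alpha)$ by assumption. With this in hand, the subsequence argument applied to $\tilde{\beta}_{\varepsilon,n}^{H_0}$ yields $\tilde{\beta}_{\varepsilon,n}^{H_0}\overset{P}{\to}\beta_0^{H_0}$ under $H_1^{(2)}$.

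The argument is routine; I do not expect a genuine obstacle. The only thing to be careful about is bookkeeping: checking that the uniform limit theorems \eqref{U1_conv-small} and \eqref{conv:U2}, originally invoked in a context where $\alpha_0$ was the unrestricted true value, are statements about the random functions $U_{\varepsilon,n,v}^{(1)}$ and $U_{\varepsilon,n,v}^{(2)}$ holding uniformly over the full parameter spaces, so that restricting the optimization to $\Theta_\alpha^{H_0}$ or $\Theta_\beta^{H_0}$ and passing to the alternative regime changes nothing.
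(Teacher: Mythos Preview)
Your proof is correct. It differs from the paper's own argument in style: the paper proves Lemma~\ref{lem:4-small} by a direct $\varepsilon$--$\delta$ probability bound, translating [{\bf C}]-(i) into the implication $|\alpha-\alpha_0^{H_0}|\ge\delta_1\Rightarrow U_1(\alpha,\alpha_0)-U_1(\alpha_0^{H_0},\alpha_0)>\delta_1'$ and then decomposing $P(|\tilde{\alpha}_{\varepsilon,n}^{(1),H_0}-\alpha_0^{H_0}|\ge\delta_1)$ into pieces controlled by the uniform convergence \eqref{U1_conv-small} and the minimizing property of $\tilde{\alpha}_{\varepsilon,n}^{(1),H_0}$. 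You instead run the subsequence/compactness argmin argument, exactly as in the consistency parts of Lemma~\ref{lem:1} and Theorem~\ref{thm:1}, using [{\bf C}] only to guarantee uniqueness of the restricted minimizer. Both routes are standard and equivalent on a compact parameter space; your version has the virtue of matching the paper's earlier consistency proofs verbatim, while the paper's version for this lemma exploits the well-separatedness in [{\bf C}] more explicitly and avoids the pointwise-in-$\omega$ phrasing.
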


\begin{proof}[\bf Proof of Lemma \ref{lem:4-small}]
It follows from \eqref{U1_conv-small} that under $H_1^{(1)}$,
\begin{align*}
\sup_{\alpha\in\Theta_\alpha}\left|\varepsilon^2\left(U_{\varepsilon,n,v}^{(1)}(\alpha)-U_{\varepsilon,n,v}^{(1)}(\alpha_0)\right)-U_1(\alpha,\alpha_0)\right|\overset{P}{\to}0.
\end{align*}
Assumption {\bf{[C]}}-(i) implies the following: For any $\delta_1>0$, there exists $\delta_1'>0$ such that for any $\alpha\in\Theta_{\alpha}^{H_0}$,
\begin{align}\label{C1:1-small}
|\alpha-\alpha_0^{H_0}|\ge\delta_1\ \Longrightarrow\ U_1(\alpha,\alpha_0)-{U}_1(\alpha_0^{H_0},\alpha_0)>\delta_1'.
\end{align}
By the definition of $\tilde{\alpha}_{\varepsilon,n}^{(1),H_0}$ and \eqref{C1:1-small}, it holds that
\begin{align*}
P(|\tilde{\alpha}_{\varepsilon,n}^{(1),H_0}-\alpha_0^{H_0}|\ge\delta_1)&\le
P\left(U_1(\tilde{\alpha}_{\varepsilon,n}^{(1),H_0},\alpha_0)-{U}_1(\alpha_0^{H_0},\alpha_0)>\delta_1'\right)\\
&=P\left(\left\{U_1(\tilde{\alpha}_{\varepsilon,n}^{(1),H_0},\alpha_0)-\varepsilon^2\left(U_{\varepsilon,n,v}^{(1)}(\tilde{\alpha}_{\varepsilon,n}^{(1),H_0})-U_{\varepsilon,n,v}^{(1)}(\alpha_0)\right)\right\}\right.\\
&\quad\quad+\varepsilon^2\left(U_{\varepsilon,n,v}^{(1)}(\tilde{\alpha}_{\varepsilon,n}^{(1),H_0})-U_{\varepsilon,n,v}^{(1)}(\alpha_0^{H_0})\right)\\
&\quad\quad+\left. \left\{\varepsilon^2\left(U_{\varepsilon,n,v}^{(1)}(\alpha_0^{H_0})-U_{\varepsilon,n,v}^{(1)}(\alpha_0)\right)-{U}_1(\alpha_0^{H_0},\alpha_0)\right\}>\delta_1'\right)\\
&\le 2P\left(\sup_{\alpha\in\Theta_\alpha}\left|\varepsilon^2\left(U_{\varepsilon,n,v}^{(1)}(\alpha)-U_{\varepsilon,n,v}^{(1)}(\alpha_0)\right)-U_1(\alpha,\alpha_0)\right|>\frac{\delta_1'}{3}\right)\\
&\quad\quad+P\left(\varepsilon^2\left(U_{\varepsilon,n,v}^{(1)}(\tilde{\alpha}_{\varepsilon,n}^{(1),H_0})-U_{\varepsilon,n,v}^{(1)}(\alpha_0^{H_0})\right)>\frac{\delta_1'}{3}\right)\\
&\to 0,
\end{align*}
which implies $\tilde{\alpha}_{\varepsilon,n}^{(1),H_0}\overset{P}{\to}\alpha_0^{H_0}$ under $H_1^{(1)}$.

Next, it follows from \eqref{conv:U2} that under $H_1^{(2)}$,
\begin{align*}
\sup_{\beta\in\Theta_\beta}\left|\frac{1}{n}\left(U_{\varepsilon,n,v}^{(2)}(\beta|\tilde{\alpha}_{\varepsilon,n}^{(1)})-U_{\varepsilon,n,v}^{(2)}(\beta_0|\tilde{\alpha}_{\varepsilon,n}^{(1)})\right)
-U_2(\beta,\beta_0)\right|\overset{P}{\to}0
\end{align*}
Assumption {\bf{[C]}}-(ii) implies the following: For any $\delta_2>0$, there exists $\delta_2'>0$ such that for any $\beta\in\Theta_{\beta}^{H_0}$,
\begin{align}\label{C1:2-small}
|\beta-\beta_0^{H_0}|\ge\delta_2\ \Longrightarrow\ U_2(\beta;\beta_0)-{U}_2(\beta_0^{H_0},\beta_0)>\delta_2'.
\end{align}
By the definition of $\tilde{\beta}_{\varepsilon,n}^{H_0}$ and \eqref{C1:2-small}, it holds that
\begin{align*}
P(|\tilde{\beta}_{\varepsilon,n}^{H_0}-\beta_0^{H_0}|\ge\delta_2)&\le
P\left(U_2(\tilde{\beta}_{\varepsilon,n}^{H_0},\beta_0)-{U}_2(\beta_0^{H_0},\beta_0)>\delta_2'\right)\\
&=P\left(\left\{U_2(\tilde{\beta}_{\varepsilon,n}^{H_0},\beta_0)-\frac{1}{n}\left(U_{\varepsilon,n,v}^{(2)}(\tilde{\beta}_{\varepsilon,n}^{H_0}|\tilde{\alpha}_{\varepsilon,n}^{(1)})-U_{\varepsilon,n,v}^{(2)}(\beta_0|\tilde{\alpha}_{\varepsilon,n}^{(1)})\right)\right\}\right.\\
&\quad\quad+\frac{1}{n}\left(U_{\varepsilon,n,v}^{(2)}(\tilde{\beta}_{\varepsilon,n}^{H_0}|\tilde{\alpha}_{\varepsilon,n}^{(1)})-U_{\varepsilon,n,v}^{(2)}(\beta_0^{H_0}|\tilde{\alpha}_{\varepsilon,n}^{(1)})\right)\\
&\quad\quad+\left. \left\{\frac{1}{n}\left(U_{\varepsilon,n,v}^{(2)}(\beta_0^{H_0}|\tilde{\alpha}_{\varepsilon,n}^{(1)})-U_{\varepsilon,n,v}^{(2)}(\beta_0|\tilde{\alpha}_{\varepsilon,n}^{(1)})\right)-{U}_2(\beta_0^{H_0},\beta_0)\right\}>\delta_2'\right)\\
&\le 2P\left(\sup_{\beta\in\Theta_\beta}\left|\frac{1}{n}\left(U_{\varepsilon,n,v}^{(2)}(\beta|\tilde{\alpha}_{\varepsilon,n}^{(1)})-U_{\varepsilon,n,v}^{(2)}(\beta_0|\tilde{\alpha}_{\varepsilon,n}^{(1)})\right)
-U_2(\beta,\beta_0)\right|>\frac{\delta_2'}{3}\right)\\
&\quad\quad+P\left(\frac{1}{n}\left(U_{\varepsilon,n,v}^{(2)}(\tilde{\beta}_{\varepsilon,n}^{H_0}|\tilde{\alpha}_{\varepsilon,n}^{(1)})-U_{\varepsilon,n,v}^{(2)}(\beta_0^{H_0}|\tilde{\alpha}_{\varepsilon,n}^{(1)})\right)>\frac{\delta_2'}{3}\right)\\
&\to 0,
\end{align*}
which implies $\tilde{\beta}_{\varepsilon,n}^{H_0}\overset{P}{\to}\beta_0^{H_0}$ under $H_1^{(2)}$. 
\end{proof}

\begin{proof}[\bf Proof of Theorem \ref{thm:4-small}]
Under $H_1^{(1)}$, it holds from the proof of Lemma \ref{lem:1} and Lemma \ref{lem:4-small} that
\begin{align*}
\tilde{\alpha}_{\varepsilon,n}^{(1)}\overset{P}{\to}\alpha_0,\quad 
\tilde{\alpha}_{\varepsilon,n}^{(1),H_0}\overset{P}{\to}\alpha_0^{H_0}\neq\alpha_0.
\end{align*}
Hence, one has from \eqref{U1_conv-small} that
\begin{align*}
\varepsilon^2 \Lambda_{n}^{(1)}&=\varepsilon^2\left\{U_{\varepsilon,n,v}^{(1)}\left(\tilde{\alpha}_{\varepsilon,n}^{(1),H_0}\right)-U_{\varepsilon,n,v}^{(1)}\left(\tilde{\alpha}_{\varepsilon,n}^{(1)}\right)\right\}\\
&=\varepsilon^2\left\{U_{\varepsilon,n,v}^{(1)}\left(\tilde{\alpha}_{\varepsilon,n}^{(1),H_0}\right)-U_{\varepsilon,n,v}^{(1)}\left(\alpha_0\right)\right\}-\varepsilon^2\left\{U_{\varepsilon,n,v}^{(1)}\left(\tilde{\alpha}_{\varepsilon,n}^{(1)}\right)-U_{\varepsilon,n,v}^{(1)}\left(\alpha_0\right)\right\}\\
&\overset{P}{\to} U_1(\alpha_0^{H_0},\alpha_0)-U_1(\alpha_0,\alpha_0)\\
&=U_1(\alpha_0^{H_0},\alpha_0).
\end{align*}
Since it follows from ${\bf{[A3]}}$ that $U_1(\alpha_0^{H_0},\alpha_0)>0$ under $H_1^{(1)}$, 
one has that for any $\delta\in(0,1)$,
\begin{align*}
P(\Lambda_n^{(1)}<\pi_r(\delta))=P(\varepsilon^2\Lambda_n^{(1)}<\varepsilon^2\pi_r(\delta))\to0.
\end{align*}

Next, by the proof of Theorem \ref{thm:1} and Lemma \ref{lem:4-small}, we obtain that
under $H_1^{(2)}$, 
\begin{align*}
\tilde{\beta}_{\varepsilon,n}\overset{P}{\to}\beta_0,\quad 
\tilde{\beta}_{\varepsilon,n}^{H_0}\overset{P}{\to}\beta_0^{H_0}\neq\beta_0.
\end{align*}
Therefore, by \eqref{conv:U2}, 
\begin{align*}
\frac{1}{n} \Lambda_{n}^{(2)}&=\frac{1}{n}\left\{U_{\varepsilon,n,v}^{(2)}\left(\tilde{\beta}_{\varepsilon,n}^{H_0}|\tilde{\alpha}_{\varepsilon,n}^{(1)}\right)-U_{\varepsilon,n,v}^{(2)}\left(\tilde{\beta}_{\varepsilon,n}|\tilde{\alpha}_{\varepsilon,n}^{(1)}\right)\right\}\\
&=\frac{1}{n}\left\{U_{\varepsilon,n,v}^{(2)}\left(\tilde{\beta}_{\varepsilon,n}^{H_0}|\tilde{\alpha}_{\varepsilon,n}^{(1)}\right)-U_{\varepsilon,n,v}^{(2)}\left(\beta_0|\tilde{\alpha}_{\varepsilon,n}^{(1)}\right)\right\}-\frac{1}{n}\left\{U_{\varepsilon,n,v}^{(2)}\left(\tilde{\beta}_{\varepsilon,n}|\tilde{\alpha}_{\varepsilon,n}^{(1)}\right)-U_{\varepsilon,n,v}^{(2)}\left(\beta_0|\tilde{\alpha}_{\varepsilon,n}^{(1)}\right)\right\}\\
&\overset{P}{\to} U_2(\beta_0^{H_0},\beta_0)-U_2(\beta_0,\beta_0)\\
&=U_2(\beta_0^{H_0},\beta_0).
\end{align*}
It holds from {\bf{[A3]}} that 
$U_2(\beta_0^{H_0},\beta_0)>0$ under $H_1^{(2)}$. 
Consequently, we have that for any $\delta\in(0,1)$,
\begin{align*}
P(\Lambda_n^{(2)}<\chi^2_s(\delta))=P\left(\frac{1}{n}\Lambda_n^{(1)}<\frac{1}{n}\chi^2_s(\delta)\right)\to0.
\end{align*}
This completes the proof.
\end{proof}



\bibliographystyle{abbrv}
\bibliography{main.bbl}

\end{document}